\newcommand{\arc}[1]{{%
\setbox9=\hbox{$#1$}%
\ooalign{\resizebox{\wd9}{\height}{\texttoptiebar{\phantom{A}}}\cr$#1$}}}
\definecolor{webgreen}{rgb}{0,.5,0}
\definecolor{webbrown}{rgb}{.6,0,0}
\newcommand{\arxiv}[1]{\href{http://arxiv.org/abs/#1}{\texttt{arXiv:#1}}}
\newcommand{\seqnum}[1]{\href{http://oeis.org/#1}{\underline{#1}}}
\newcommand{\Figs}[1]{\hyperref[#1]{Figure~\ref*{#1}}}
\newcommand{\Tabs}[1]{\hyperref[#1]{Table~\ref*{#1}}}
\theoremstyle{plain}
\newtheorem{theorem}{Theorem}
\newtheorem{lemma}[theorem]{Lemma}
\newtheorem{corollary}[theorem]{Corollary}
\newtheorem{proposition}[theorem]{Proposition}
\theoremstyle{definition}
\newtheorem{definition}[theorem]{Definition}
\newtheorem{example}[theorem]{Example}
\newtheorem{notation}[theorem]{Notation}
\theoremstyle{remark}
\newtheorem{remark}[theorem]{Remark}
\title{\bf Enumerating the states of the twist knot}
\author{Franck Ramaharo\\
\small D\'epartement de Math\'ematiques et Informatique\\[-0.8ex]
\small Universit\'e d'Antananarivo\\[-0.8ex] 
\small 101 Antananarivo, Madagascar\\
\small\href{mailto:franck.ramaharo@gmail.com}{\tt franck.ramaharo@gmail.com}\\
}
\date{\small\today\\
\small 2010 Mathematics Subject Classifications: 57M25; 94B25.}
\begin{document}

\maketitle

\begin{abstract}
We enumerate the state diagrams of the twist knot shadow which consist of the disjoint union of two trivial knots. The result coincides with the maximal number of regions into which the plane is divided by a given number of circles. We then establish a bijection between the state enumeration and this particular partition of the plane by means of binary words.

\bigskip\noindent \textbf{Keywords:} twist knot, knot shadow, state diagram, planar arrangement of circles.
\end{abstract}

\section{Introduction}

Knot theory defines a mathematical knot as a closed curve in three-dimensional space that does not intersect itself. We usually describe knots by drawing the so-called \textit{knot diagram}, a generic projection of the knot to the plane or the sphere with finitely many double points called \textit{crossings}. It is indicated at each such point which strand crosses over and which one crosses under usually by erasing part of the lower strand. Additionally, we might also continuously deform a knot diagram such that we obtain another representation which is \textit{planar isotopic} to the former diagram, and the crossings remaining unaltered, i.e., no introduction of new crossings and no removal of the existing ones \cite[p.\ 12]{Adams}. The \textit{shadow diagram}, or shortly the \textit{shadow} of the knot is the regular projection onto the plane that omits the crossing information \cite{DD}. If we let $ \mathcal{S} $ denote a shadow, then we call a component of $ \mathbb{R}^2\setminus \mathcal{S} $ a \textit{region} of $\mathcal{S} $. We say that two regions are adjacent if the contours of the two regions have at least an arc in common, and we say that two regions are opposite if their contour have exactly a double point in common. A \textit{checkerboard coloring} \cite{HK} of a shadow is a coloring of every region of $ \mathcal{S} $ to be black or white so that black regions are only adjacent to white regions, and conversely, white regions are only adjacent to black ones. Once colored, we label each region: the unbounded one is labeled as a $ A$ and so are the regions of the same color. The adjacent regions to an \textit{$ A $-region} are then labeled as $ B $ (see \Figs{Fig:FigureEightDiagram}). 

\begin{figure}
\centering
\includegraphics[width=.5\linewidth]{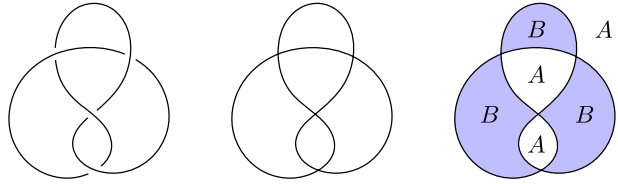}
\caption{The diagram of the \textit{figure-eight knot} and its shadow diagram as well as the corresponding labels and \textit{checkerboard coloring}.}
\label{Fig:FigureEightDiagram}
\end{figure}

A common operation to perform on a shadow’s crossing is to \textit{split} it in one way or the other: we remove the crossing and glue the arcs so that either of the two opposite regions are merged into one \cite[p.\ 27]{Kauffman1}. The \textit{$ A$-split} joins the \textit{$ A $-regions} while the \textit{$ B $-split} joins the $ B $-regions as illustrated in \Figs{Fig:StateOfCrossingFigureEight}.

\begin{figure}
\centering		
\includegraphics[width=0.5\linewidth]{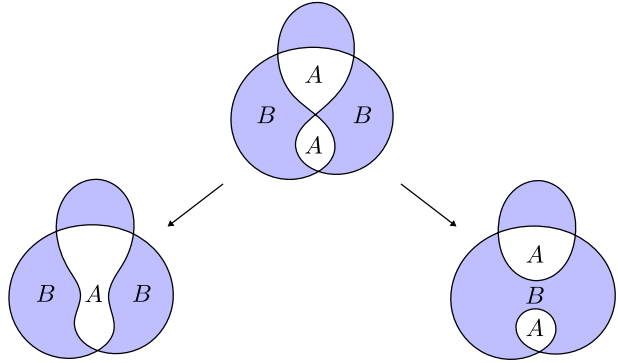}
\caption{The $ A $-split and the $ B $-split.}
\label{Fig:StateOfCrossingFigureEight}		
\end{figure}

Thus, any $ n $-crossing knot diagram can be decomposed into $ 2^n $ final descendants without crossing, which are called \textit{states} of the diagram \cite[p.\ 71]{Manturov}. A \textit{state} is a collection of separated component called \textit{trivial knot}. We also call such knot \textit{unknot} whose diagram is a simple closed loop. We shall refer to \textit{$ k $-state} a state which consists of $ k $ components.

\begin{example}
We repeat the operation we performed to the previous figure-eight knot, and we obtain a complete family of state diagrams of trivial knots (see \Figs{Fig:StateDiagramsFigureEight}).
\begin{figure}[H]
\centering		
\includegraphics[width=.9\linewidth]{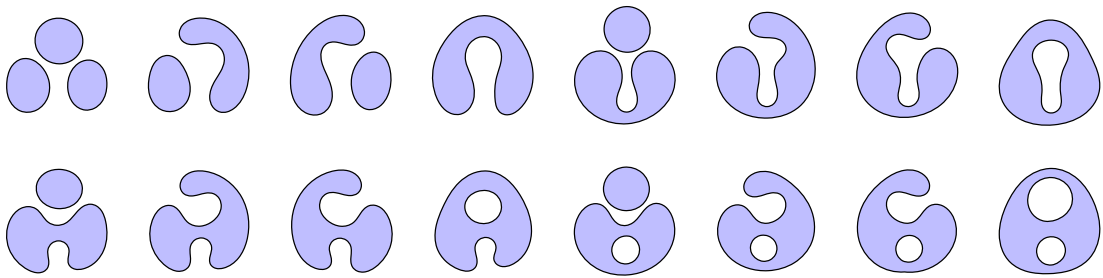}
\caption{The state diagrams of the figure-eight knot.}
\label{Fig:StateDiagramsFigureEight}		
\end{figure}
\end{example}

Throughout this paper, we only consider shadow diagrams, and for the sake of simplicity, we abusively refer to the shadow diagram by either of the terms knot, knot diagram, or diagram.

\begin{definition}
Let $ S $ denote a state of a $ n $-crossing diagram $ D $, and let $ |S| $ denote the number of components of the state $ S $. Also, let $ d_{n,k} $ enumerates the $ k$-state diagrams. We define the generating polynomial by the formula
\begin{equation*}
D(x):=\sum_{S} x^{|S|}=\sum_{k\geq 0}d_{n,k}x^k,
\end{equation*} 
where the summation is taken over all states for $ D $.
\end{definition}

Hence, the generating polynomial of the figure-eight diagram is given by
\begin{equation*}
D(x)=3x^3+8x^2+5x.
\end{equation*}

Associating a knot operation with a polynomial is one of the most usual ways of classifying and enumerating knots. The so-called \textit{knot polynomials} \cite{JR,Kauffman} are invariant polynomials which help to decide whether two different looking pictures in fact represent the same knot. In this paper, we follow the combinatoric approach of Kauffman by using a simplified version of the bracket polynomial \cite{Kauffman}. We express our polynomial only by means of the number components of the states without including any further topological information. This makes sense since we only focus on the shadow diagrams.

The rest of this paper is organized as follows. In \hyperref[Sec:TwsitKnot]{section~\ref*{Sec:TwsitKnot}}, we establish the generating polynomial of the twist knot of $ n $ crossings as well as some related knots. We pay a particular attention to the cardinality of the class of the $ 2 $-states as it turns out to be the maximal number of regions into which the plane is divided by $ n+1 $ circles in general arrangement. In \hyperref[Sec:Rosette]{section~\ref*{Sec:Rosette}}, we establish an arrangement of circles that meets the previously mentioned criterion. The regions of the corresponding partition of the plane are next encoded with binary words in \hyperref[Sec:RosetteEncoding]{section~\ref*{Sec:RosetteEncoding}}. We also this encoding scheme to the states of the twist knot in \hyperref[Sec:StatesEncoding]{section~\ref*{Sec:StatesEncoding}}. Finally we establish a bijection between the partition of the plane and the class of the $ 2 $-states in \hyperref[Sec:Bijection]{section~\ref*{Sec:Bijection}}.

\section{Twist knot}\label{Sec:TwsitKnot}

First of all, let us focus on the following three family of knots.
\begin{definition}
Let $ n $ be a natural number. A \textit{twist loop} is a knot obtained by repeatedly twisting a closed loop. We call a twist loop of $ n $ half-twists a \textit{$ n $-twist loop}, and we refer to such knot as $ T_n $. For instance, the $ 0 $-twist loop and the $ 6 $-twist loop are illustrated in \Figs{Fig:6TwsitLoop}.
\begin{figure}[H]
\centering
\includegraphics[width=0.4\linewidth]{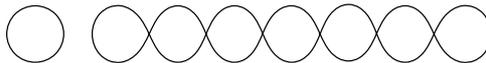}
\caption{The $ 0 $-twist loop (unknot) and the $ 6 $-twist loop.}
\label{Fig:6TwsitLoop}
\end{figure}
\end{definition}

We introduce the second simplest family of knot by applying a slight surgery to the twist loop.
\begin{definition}
We construct a \textit{foil knot} or shortly a \textit{foil}  \cite{RR} of $ n$ half-twists by removing a little arc from the right-end and the left-end of the $ n $-wist loop, then connecting the ends in pairs by arcs that do not cross each other as in \Figs{Fig:6TwistLoopFoil}. We call a knot of such kind a \textit{$ n $-foil} and we refer to it as $ F_n $.
\begin{figure}[H]
\centering
\includegraphics[width=.6\linewidth]{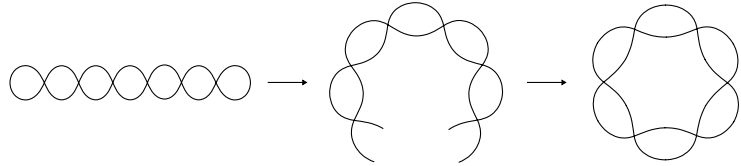}
\caption{Constructing a $ 6 $-foil knot from a $ 6 $-twist loop}
\label{Fig:6TwistLoopFoil}
\end{figure}
\end{definition}

In the same idea, we define the twist knot as follows.
\begin{definition}
We let $ \uptau_n $ denote a \textit{twist knot} \cite{JH} of $ n$ half-twists which is a knot obtained by linking the ends of a $ n $-twist loop together. Therefore, we may consider the twist knot as two-parts knot: the \textit{twist part} and the \textit{link part}. The former consists of $ n $ half-twists and the latter consists of two crossings (see \Figs{Fig:6TwistKnot}). 
\begin{figure}[H]
\centering
\includegraphics[width=.5\linewidth]{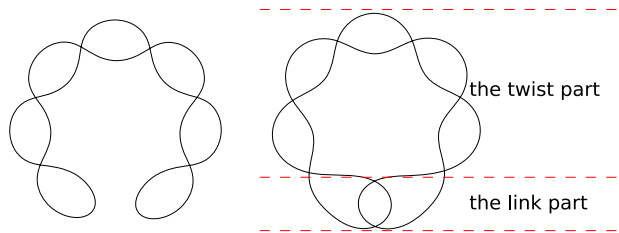}
\caption{The two parts of the twist knot: the twist part and the link part.}
\label{Fig:6TwistKnot}
\end{figure}
\end{definition}

\begin{remark}
Following the construction of the foil knot and the twist knot, we have the corresponding representation of the $ 0 $-foil $ F_0 $ and the $ 0 $-twist knot $ \uptau_0 $ in \Figs{Fig:0FoilTwistKnot}. Notice that the representation of the $ 0 $-twist knot always has 2 crossings in the link part.
\begin{figure}[H]
\centering
\includegraphics[width=.35\linewidth]{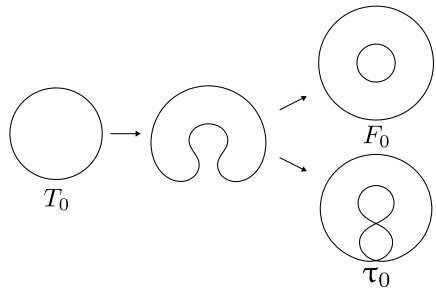}
\caption{Construction of the $ 0 $-foil and the $ 0 $-twist knot from the $ 0 $-twist loop.}
\label{Fig:0FoilTwistKnot}
\end{figure}
\end{remark}

We have the following immediate result.
\begin{corollary}
The generating polynomials of the $ 0 $-twist loop and the $ 0 $-foil knot are respectively given by
\begin{equation*}
T_0(x)=x
\end{equation*}
and
\begin{equation*}
F_0(x)=x^2.
\end{equation*}
\end{corollary}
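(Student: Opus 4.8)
The plan is to reduce the computation of both polynomials to a direct count of components, using the fact that a crossing-free diagram has exactly one state. First I would recall from the text that any $n$-crossing diagram decomposes into $2^n$ states obtained by splitting each crossing in one of two ways. Both the $0$-twist loop $T_0$ and the $0$-foil $F_0$ carry no crossings, so there is nothing to split: in each case the unique state is the diagram itself. Consequently the summation defining $D(x)$ collapses to a single term, and the generating polynomial is simply $x^{|D|}$, where $|D|$ denotes the number of components of the diagram.

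It then remains to read off $|D|$ from the construction of each knot. By definition the $0$-twist loop is a single closed loop, namely the unknot, so $|T_0|=1$ and hence $T_0(x)=x$. For the $0$-foil I would trace through the surgery described in the definition of $F_n$: one removes a short arc from each end of the $0$-twist loop and reconnects the four resulting endpoints in non-crossing pairs. With zero half-twists this reconnection separates the loop into two disjoint simple closed curves, as depicted in \Figs{Fig:0FoilTwistKnot}, so $|F_0|=2$ and therefore $F_0(x)=x^2$.

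The argument is essentially a matter of unwinding definitions, so I do not anticipate a genuine obstacle. The one point that warrants care is confirming the component count for $F_0$: one must check that the non-crossing pairing of the cut ends of the $0$-twist loop yields two separate unknots rather than a single loop. This is immediate from the pictured construction, but it is the step on which the exponent $2$, as opposed to $1$, depends, and it explains why the foil and the twist loop have distinct generating polynomials despite both being crossing-free.
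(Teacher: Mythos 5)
Your proof is correct and matches the paper's intent: the paper states this corollary as an immediate consequence of the definitions (it offers no written proof), and your argument—zero crossings means a unique state, then read off one component for $T_0$ and two for $F_0$—is exactly that unwinding, with the component count for $F_0$ confirmed by \Figs{Fig:0FoilTwistKnot} and later by the paper's remark that $f_{0,1}=0$ because the $0$-foil already consists of two components.
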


In order to compute the generating polynomial $ \uptau_n(x) $ of the $ n $-twist knot, let us first focus on how the split operation affects the link part. The results are four intermediate state diagrams as illustrated in \Figs{Fig:StateOfLinkPart}.
\begin{figure}[H]
\centering
\includegraphics[width=0.4\linewidth]{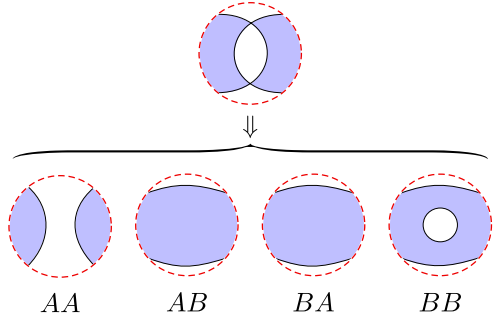}
\caption{The states of the link part.}
\label{Fig:StateOfLinkPart}
\end{figure}
The words $ AA $, $ AB $, $ BA $ and $ BB $ represent the splits sequence we have applied to the link part. As an immediate application, we give the generating polynomial of the $ 0 $-twist knot.

\begin{corollary}
The generating polynomial of the $ 0 $-twist knot is
\begin{equation*}
\uptau_0(x)=x^3+2x^2+x.
\end{equation*}
\end{corollary}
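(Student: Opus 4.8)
The plan is to enumerate the four states of $\uptau_0$ directly and count the number of components of each. Since $n=0$, the twist part contributes no crossings, so by the construction in the Remark the entire shadow of $\uptau_0$ consists precisely of the two crossings of the link part. Consequently the diagram has exactly $2^2=4$ states, and these are obtained by applying the four split sequences $AA$, $AB$, $BA$ and $BB$ to the link part, as already recorded in \Figs{Fig:StateOfLinkPart}. Thus the intermediate diagrams for the link part are in this case the full states of $\uptau_0$, and no further resolution is needed.

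First I would fix the shadow of $\uptau_0$ using \Figs{Fig:0FoilTwistKnot}: the underlying $0$-twist loop is a single plain loop, and the link part is attached to it through the two crossings, with the outer arcs joined without introducing any new crossing. For each of the four words I would then resolve both crossings according to the prescribed $A$- or $B$-split, reconnect the arcs accordingly, and trace the resulting curve(s) to count how many disjoint trivial loops remain. Collecting a factor $x^{|S|}$ for each state and summing then yields $\uptau_0(x)$.

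The key step is this component count. I expect the two orientation-reversed sequences $AB$ and $BA$ to produce states with the same number of components by the left–right symmetry of the link part, each contributing a $2$-state, while the symmetric sequences $AA$ and $BB$ split the diagram into three loops and one loop respectively. This would give
\begin{equation*}
\uptau_0(x)=\sum_{S} x^{|S|}=x^3+2x^2+x,
\end{equation*}
which is the claimed polynomial, with the coefficient sum $1+2+1=4$ correctly matching the total number of states.

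The main obstacle is the bookkeeping in the component count: each split either merges two strands at a crossing or reconnects them so as to pinch off a new loop, and one must follow the strands carefully through both crossings to be certain that no loop is double-counted or overlooked. Because there are only four states and each is a planar curve obtained from at most two former crossings, this verification is entirely finite and can be read off directly from the resolved diagrams in \Figs{Fig:StateOfLinkPart}.
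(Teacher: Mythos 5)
Your proposal is correct and follows essentially the same route as the paper: the paper's proof likewise just enumerates the four states of the two link-part crossings (its Figure~\ref{Fig:0TwsitKnot} is exactly the resolved diagrams you describe) and reads off $\uptau_0(x)=x+x^2+x^2+x^3$. The only caveat is that your assignment of the $3$-loop state to $AA$ and the $1$-loop state to $BB$ depends on the checkerboard labeling convention, but either assignment yields the same polynomial, so nothing is at stake.
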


\begin{proof}
The states of the $ 0 $-twist knot are illustrated in \Figs{Fig:0TwsitKnot}. The associated generating polynomial is then given by $ 	\uptau_0(x)=x+x^2+x^2+x^3$.
\begin{figure}[H]
\centering
\includegraphics[width=0.5\linewidth]{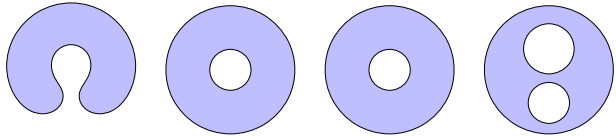}
\caption{The states of the $ 0 $-twist knot}
\label{Fig:0TwsitKnot}
\end{figure}

\end{proof}

\begin{proposition}
The generating polynomial of the $ n $-twist knot is given by
\begin{equation}\label{eq:uptaunx}
\uptau_n(x)=T_n(x)+(x+2)F_n(x),\ n\geq 1,
\end{equation}
where $ T_n(x) $ and $ F_n(x) $ respectively denote the generating polynomial of the $ n $-twist loop and the $ n $-foil knot.
\end{proposition}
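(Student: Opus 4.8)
The plan is to resolve the two crossings of the link part \emph{before} touching the $ n $ crossings of the twist part. Since the $ n $-twist knot has exactly $ n+2 $ crossings and the generating polynomial sums $ x^{|S|} $ over all $ 2^{n+2} $ states, I would first organize this sum according to the $ 2^2=4 $ ways of resolving the link part, namely the split words $ AA $, $ AB $, $ BA $ and $ BB $ of \Figs{Fig:StateOfLinkPart}. Writing $ D_w $ for the shadow obtained from $ \uptau_n $ by applying the split sequence $ w $ to the link part, every state of $ \uptau_n $ arises uniquely as some $ w $ followed by a resolution of the remaining $ n $ twist-part crossings, and the number of components is read off the final crossingless diagram. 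This yields the factorization $ \uptau_n(x)=D_{AA}(x)+D_{AB}(x)+D_{BA}(x)+D_{BB}(x) $, in which each $ D_w(x) $ is the generating polynomial of a shadow whose only crossings are the $ n $ crossings of the twist part.

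The core of the argument is then to identify each $ D_w $ up to planar isotopy by tracking how the closure arcs produced by the split reconnect the four strand-ends emerging from the twist region. I expect exactly one resolution to reconnect the two strands into a single closed twisted loop, so that $ D_w $ is precisely the $ n $-twist loop and $ D_w(x)=T_n(x) $; two further resolutions to close the strands in the paired fashion that defines the foil, so that each such $ D_w $ is the $ n $-foil and contributes $ F_n(x) $; and the last resolution to produce the $ n $-foil together with one extra disjoint crossingless loop.

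To handle that last case I would record the elementary fact that adjoining a disjoint crossingless loop to any shadow $ D' $ multiplies its generating polynomial by $ x $: every state of the enlarged diagram is a state of $ D' $ with exactly one additional trivial component, so its polynomial is $ x\,D'(x) $. Applied here, this gives $ x\,F_n(x) $ for the remaining resolution. Summing the four contributions then produces
\begin{equation*}
\uptau_n(x)=T_n(x)+F_n(x)+F_n(x)+x\,F_n(x)=T_n(x)+(x+2)F_n(x),
\end{equation*}
which is exactly \eqref{eq:uptaunx}.

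The main obstacle is the identification step: one must verify rigorously, and not merely pictorially, which of the four clasp resolutions yields the twist loop, which two yield the plain foil, and which yields the foil together with an isolated unknot. This is a careful planar-isotopy analysis of the reconnection patterns at the two link-part crossings, and it is precisely the point at which \Figs{Fig:StateOfLinkPart} carries the weight of the proof; once the four residual shadows are correctly named, the remaining bookkeeping is immediate.
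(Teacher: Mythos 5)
Your proposal is correct and follows essentially the same route as the paper: split the two link-part crossings first, identify the four resulting diagrams as one $n$-twist loop, two $n$-foils, and one $n$-foil with a disjoint unknot, and sum their polynomials using the fact that a disjoint trivial component multiplies the polynomial by $x$. The paper carries out exactly this decomposition (relying, as you anticipate, on its figure of the four intermediate states for the identification step), so your argument matches the published proof in both structure and substance.
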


\begin{proof}
Given a $ n $-twist knot, we split the link part and we obtain four diagrams: a $ n $-twist loop, two $ n $-foil knots and the disjoint union of the unknot and a $ n $-foil knot (\Figs{Fig:StateOfCrossingTwistKnot}). 
\begin{figure}[H]
\centering
\includegraphics[width=0.7\linewidth]{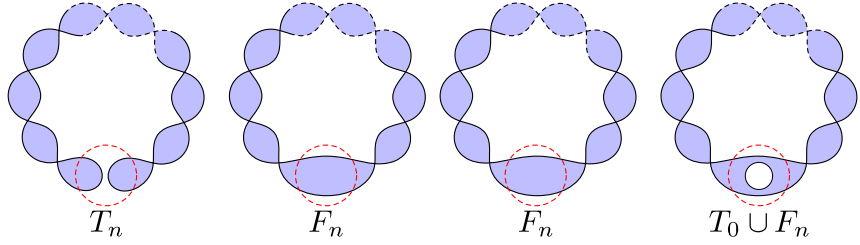}
\caption{The four intermediate states of the the twist knot.}
\label{Fig:StateOfCrossingTwistKnot}
\end{figure}
Hence we write
\begin{equation*}
\uptau_n(x)=T_n(x)+2F_n(x)+xF_n(x).
\end{equation*}
\end{proof}

We can solve the expression of $ \uptau_n(x) $ for the closed form following $ T_n(x) $ and $ F_n(x) $.
\begin{proposition}
The generating polynomial of the $ n $-twist loop is given by
\begin{equation}\label{eq:twistloop}
T_n(x)=x(x+1)^n,\ n\geq 0.
\end{equation}
\end{proposition}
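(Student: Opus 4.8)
The plan is to prove the closed form by establishing the one-term recursion $T_n(x)=(x+1)T_{n-1}(x)$ for $n\geq 1$ and then solving it against the base case $T_0(x)=x$ from the earlier corollary. Essentially all of the content lives in this recursion: once it is in hand, a one-line induction yields $T_n(x)=x(x+1)^n$ for all $n\geq 0$.

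To obtain the recursion I would single out one crossing at the end of the twist part of $T_n$ and examine the two ways of splitting it while leaving the remaining $n-1$ crossings untouched. Geometrically, the $n$-twist loop is a single closed curve formed by a row of $n$ half-twists closed off by a simple arc (a ``cap'') on each side, as in \Figs{Fig:6TwsitLoop}. I claim that one of the two splits at the chosen end crossing simply absorbs that crossing into the adjacent cap, turning the diagram into a clean $(n-1)$-twist loop, while the other split instead pinches off a small closed loop (a disjoint unknot) and again leaves an $(n-1)$-twist loop behind. In this way the $2^n$ states of $T_n$ are partitioned into two families, each in bijection with the $2^{n-1}$ states of $T_{n-1}$.

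Concretely, the first split gives a bijection between the corresponding states of $T_n$ and the states of $T_{n-1}$ that preserves the number of components, contributing $T_{n-1}(x)$ to $T_n(x)$; the second split gives a bijection with the states of $T_{n-1}$ under which every state acquires exactly one additional component, contributing $x\,T_{n-1}(x)$. Adding the two contributions yields
\[
T_n(x)=T_{n-1}(x)+x\,T_{n-1}(x)=(x+1)T_{n-1}(x),
\]
and iterating from $T_0(x)=x$ proves the formula. Equivalently, one may bypass the recursion and argue directly that a state of $T_n$ has exactly $1+j$ components precisely when $j$ of the $n$ crossings receive the pinching split, so that $d_{n,k}=\binom{n}{k-1}$ and hence $T_n(x)=\sum_{j\geq 0}\binom{n}{j}x^{\,j+1}=x(x+1)^n$.

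The main obstacle is the geometric bookkeeping in the claim above: one must verify, uniformly and \emph{independently} of how the other $n-1$ crossings happen to be split, that the two smoothings of the end crossing behave in exactly these two ways---one merging into the cap to reproduce $T_{n-1}$, the other detaching a single unknotted circle and reproducing $T_{n-1}$. This requires a careful local picture of the twist region together with its two caps; after that point the argument is just the routine induction above.
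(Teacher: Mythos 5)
Your proof is correct and follows essentially the same route as the paper: split one end crossing of the twist part, observe that the two smoothings yield either a disjoint unknot plus an $(n-1)$-twist loop or just an $(n-1)$-twist loop, deduce $T_n(x)=(x+1)T_{n-1}(x)$, and iterate from $T_0(x)=x$. Your closing remark that $d_{n,k}=\binom{n}{k-1}$ by counting pinching splits directly is a nice shortcut, but it is only a repackaging of the same recursion.
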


\begin{proof}
When $ n=0 $, we recover $ T_0(x)=x $. Let $ n\geq 1 $, we split the leftmost crossing so that the resulting diagram is either a disjoint union of the unknot and a $ (n-1) $-twist loop, or uniquely a $ (n-1) $-twist loop (see \Figs{Fig:StateOfCrossingTwistLoop}).
\begin{figure}[H]
\centering
\includegraphics[width=0.8\linewidth]{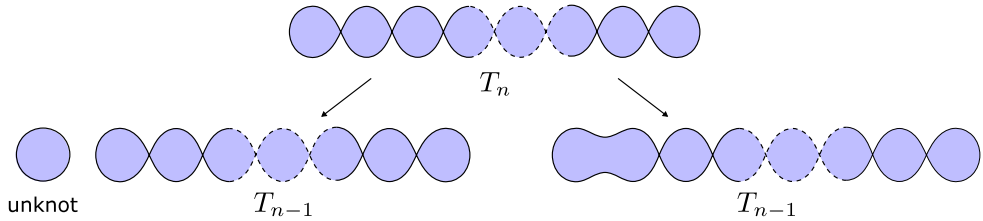}
\caption{The states of one crossing of the $ n$-twist loop.}
\label{Fig:StateOfCrossingTwistLoop}
\end{figure}
The corresponding generating polynomial is therefore
\begin{equation}\label{eq:Tnx}
T_n(x)=xT_n(x)+T_{n-1}(x).
\end{equation}
Taking into consideration the expression $ T_0(x) =x$, we obtain
\begin{equation*}
T_n(x)=x(x+1)^n.
\end{equation*}
\end{proof}

Writing $ T_n(x):=\sum_{k\geq 0}t_{n,k}x^k $ and identifying the coefficients of the generating polynomial, formulas \eqref{eq:twistloop} and \eqref{eq:Tnx} allow us to define the following recurrence:
\begin{equation}\label{eq:tnk}
\begin{cases}
t_{n,0}=0,\ t_{n,1}=t_{n,n+1}=1, & n\geq 0;\\
t_{n,k}=t_{n-1,k}+t_{n-1,k-1},	&1< k\leq n+1;
\end{cases}
\end{equation}
which gives Table~\ref{tab:twistloop} for $ 0\leq n\leq 6 $ and $ 0\leq k\leq 7 $.

\begin{table}[H]
\centering
$
\begin{array}{c|rrrrrrrr}
n\ \backslash\ k	&0	&1	&2	&3	&4	&5	&6	&7\\
\midrule
0	&0	&1	&	&	&	&	&	&\\
1	&0	&1	&1	&	&	&	&	&\\
2	&0	&1	&2	&1	&	&	&	&\\
3	&0	&1	&3	&3	&1	&	&	&\\
4	&0	&1	&4	&6	&4	&1	&	&\\
5	&0	&1	&5	&10	&10	&5	&1	&\\
6	&0	&1	&6	&15	&20	&15	&6	&1\\
\end{array}
$
\caption{Values of $ t_{n,k} $ for $ 0\leq n\leq 6 $ and $ 0\leq k\leq 7 $.}
\label{tab:twistloop}
\end{table}

We notice that the values in \Tabs{tab:twistloop} represent a horizontal-shifted binomial coefficients table with $ t_{n,k}=\binom{n}{k-1},\ 1\leq k\leq n+1$. Besides, we should mention that the constraint $ t_{n,0}=0$, $n\geq 0 $, actually holds for any knot since a state is at least composed by one component. 

\begin{proposition}
The $ n $-foil knot has the following generating polynomial:
\begin{equation}\label{eq:genepolyFn}
F_n(x)=(x+1)^n+x^2-1,\ n\geq 0.
\end{equation}
\end{proposition}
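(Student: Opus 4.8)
The plan is to mirror the strategy used for the twist loop: derive a recurrence for $F_n(x)$ by resolving a single crossing, then solve it in closed form using the already-established expression $T_n(x)=x(x+1)^n$. The base case $F_0(x)=x^2$ is supplied by the earlier corollary, so I would take $n\geq 1$ and split a crossing adjacent to the foil closure, the natural analogue of the ``leftmost crossing'' used for $T_n$.

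The key geometric step is to track the two resolutions of this crossing. I would argue that one split leaves the foil closure intact while absorbing the crossing, producing an $(n-1)$-foil and hence contributing $F_{n-1}(x)$; the other split reconnects the strands so that the top and bottom closing arcs merge, effectively replacing the foil closure by the twist-loop closure and producing an $(n-1)$-twist loop, contributing $T_{n-1}(x)$. This yields the recurrence
\begin{equation*}
F_n(x)=F_{n-1}(x)+T_{n-1}(x),\quad n\geq 1.
\end{equation*}
As a consistency check, for $n=1$ the two resolutions of the single crossing should give two disjoint circles (the factor $x^2$, i.e.\ $F_0$) and a single circle (the factor $x$, i.e.\ $T_0$), matching $F_1(x)=x^2+x$; and for $n=2$ one gets $F_1(x)+T_1(x)=2x^2+2x$, as required.

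To finish, I would telescope the recurrence against the base case, obtaining $F_n(x)=x^2+\sum_{j=0}^{n-1}T_j(x)$, and then substitute $T_j(x)=x(x+1)^j$ to turn the sum into a finite geometric series:
\begin{equation*}
F_n(x)=x^2+x\sum_{j=0}^{n-1}(x+1)^j=x^2+x\cdot\frac{(x+1)^n-1}{x}=(x+1)^n+x^2-1.
\end{equation*}
The only nonroutine part is the geometric claim of the second paragraph, namely verifying that the two splits of the chosen crossing genuinely yield an $(n-1)$-foil and an $(n-1)$-twist loop, with no spurious extra component in either case. Everything after that, the telescoping and the summation of the geometric series, is purely formal.
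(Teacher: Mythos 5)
Your proposal is correct and follows essentially the same route as the paper: splitting one crossing of the $n$-foil to obtain the recurrence $F_n(x)=T_{n-1}(x)+F_{n-1}(x)$, then telescoping against $F_0(x)=x^2$ and summing the geometric series $\sum_{j=0}^{n-1}x(x+1)^j$ to reach $(x+1)^n+x^2-1$. The paper likewise justifies the two resolutions (an $(n-1)$-twist loop and an $(n-1)$-foil) by direct inspection of the diagram, so your flagged ``nonroutine'' geometric step is handled there exactly as you propose.
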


\begin{proof}
When $ n=0 $, we verify $ F_0(x)=x^2 $. Let $ n\geq 1 $, we split a crossing of the $ n $-foil as illustrated in \Figs{Fig:StateOfCrossingFoil}, and we obtain either a $ (n-1) $-twist loop or a $ (n-1) $-foil knot.
\begin{figure}[H]
\centering
\includegraphics[width=0.5\linewidth]{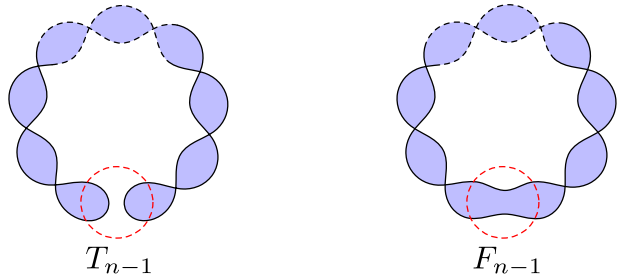}
\caption{The states of one crossing of the $ n$-foil.}
\label{Fig:StateOfCrossingFoil}
\end{figure}
We then write 
\begin{equation}\label{eq:Fnx}
F_n(x)=T_{n-1}(x)+F_{n-1}(x).
\end{equation}
Since 	$ T_n(x)=x(x+1)^n $ and $ F_0(x)=x^2 $, we get
\begin{align*}
F_n(x)&=\displaystyle{\sum_{k=0}^{n-1}T_k(x)+F_0(x)}\\
&=(x+1)^n-1+x^2.
\end{align*}
\end{proof}

As previously, if we write $ F_n(x):=\sum_{k\geq 0}f_{n,k}x^k$, then we obtain the following relation by combining \eqref{eq:genepolyFn} and \eqref{eq:Fnx}:
\begin{equation}\label{eq:fnk}
\begin{cases}
f_{0,1}=0,\ f_{0,2}=f_{1,2}=1;\\ 
f_{n,0}=0, & n\geq 0;\\
f_{n,k}=f_{n-1,k}+t_{n-1,k},	&0< k\leq n.
\end{cases}
\end{equation}
The values of $ f_ {n,k}$ are then arranged in \Tabs{tab:gpfoil} for $ 0\leq n\leq k\leq 12 $.
\begin{table}[H]
\centering
$
\begin{array}{c|rrrrrrrrrrrrr}
n\ \backslash\ k	&0	&1	&2	&3	&4	&5	&6	&7	&8	&9	&10	&11	&12\\
\midrule
0	&0	&0	&1	&	&	&	&	&	&	&	&	&	&\\
1	&0	&1	&1	&	&	&	&	&	&	&	&	&	&\\
2	&0	&2	&2	&	&	&	&	&	&	&	&	&	&\\
3	&0	&3	&4	&1	&	&	&	&	&	&	&	&	&\\
4	&0	&4	&7	&4	&1	&	&	&	&	&	&	&	&\\
5	&0	&5	&11	&10	&5	&1	&	&	&	&	&	&	&\\
6	&0	&6	&16	&20	&15	&6	&1	&	&	&	&	&	&\\
7	&0	&7	&22	&35	&35	&21	&7	&1	&	&	&	&	&\\
8	&0	&8	&29	&56	&70	&56	&28	&8	&1	&	&	&	&\\
9	&0	&9	&37	&84	&126	&126	&84	&36	&9	&1	&	&	&\\
10	&0	&10	&46	&120	&210	&252	&210	&120	&45	&10	&1	&	&\\
11	&0	&11	&56	&165	&330	&462	&462	&330	&165	&55	&11	&1	&\\
12	&0	&12	&67	&220	&495	&792	&924	&792	&465	&220	&66	&12	&1
\end{array}
$
\label{tab:gpfoil}
\caption{Values of $ f_{n,k} $ for $ 0\leq n\leq k\leq 12 $.}
\end{table}

We browse the columns in Table~\ref{tab:gpfoil} and identify the corresponding OEIS \cite{Sloane} entries.
\begin{itemize}
\item We recognize the common constraint $ f_{n,0}=0$, $ n\geq 0 $.

\item We have $ f_{n,1}=\seqnum{A001477}(n)=n $, $ n\geq 0 $. It represents the number of $ 1 $-states that is obtained by applying an $ A $-split at a chosen crossing, and applying a $ B $-split at the remaining $ n-1 $ crossings. 
We justify the value $ f_{0,1}=0$ since the $ 0 $-foil already consists of two components.

\item In the third column we have $ f_{n,0}=1 $ and $ f_{n,2}=\seqnum{A000124}(n)=\binom{n}{2}+1$, $ n\geq 1 $. Here, $ (f_{n,2})_{n\geq1} $ is the \textit{lazy caterer's sequence} \cite[p.\ 5]{GKP} which describes the maximum number of pieces of a disk that can be made with a given number of straight cuts.
\item Finally, when $ k\geq3 $, the remaining columns represent the usual binomial coefficients, i.e., $ f_{n,k}=\binom{n}{k} $, $ n\geq k $.
\end{itemize}

We can now give the closed form of the generating polynomial of the twist knot.
\begin{corollary}
The $ n $-twist knot has the following generating polynomial
\begin{equation}\label{eq:closedformtwistknot}
\uptau_n(x)=2(1+x)^{n+1}+x^3+2x^2-x-2,\ n\geq 0.
\end{equation}
\end{corollary}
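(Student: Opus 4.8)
The plan is to obtain the closed form by direct substitution: I would plug the already-established closed forms for $T_n(x)$ and $F_n(x)$ into the recursive-style identity \eqref{eq:uptaunx}, then collect terms. Since \eqref{eq:uptaunx} is stated only for $n\geq 1$, I expect the argument to split into the generic case $n\geq 1$ and a separate verification at $n=0$, where the base value $\uptau_0(x)=x^3+2x^2+x$ from the preceding corollary must be matched against the claimed formula.

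For $n\geq 1$, I would start from $\uptau_n(x)=T_n(x)+(x+2)F_n(x)$ and substitute $T_n(x)=x(x+1)^n$ together with $F_n(x)=(x+1)^n+x^2-1$. The key observation is that the two contributions proportional to $(x+1)^n$ combine cleanly: the coefficient of $(x+1)^n$ becomes $x+(x+2)=2x+2=2(x+1)$, so these terms collapse to $2(x+1)^{n+1}$. This is the only nonroutine step, and it is what produces the exponent $n+1$ in the stated answer. The remaining piece is the constant-in-$(x+1)$ contribution $(x+2)(x^2-1)$, which I would expand to $x^3+2x^2-x-2$, giving $\uptau_n(x)=2(x+1)^{n+1}+x^3+2x^2-x-2$ exactly as claimed.

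For the boundary case $n=0$, I would simply evaluate the candidate closed form: $2(x+1)^{1}+x^3+2x^2-x-2=2x+2+x^3+2x^2-x-2=x^3+2x^2+x$, which agrees with the known $\uptau_0(x)$. This confirms that although \eqref{eq:uptaunx} was derived only for $n\geq 1$, the resulting formula in fact extends to $n=0$, justifying the range $n\geq 0$ in the statement.

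There is no substantial obstacle here; the result is essentially an algebraic consolidation. The one point requiring care is the index handling: one must not apply \eqref{eq:uptaunx} at $n=0$ (where it was not established), and instead invoke the independently computed base value. The main ``insight'' worth highlighting explicitly is the merging of the $(x+1)^n$ coefficients into $2(x+1)$, since this is what cleanly yields the $2(1+x)^{n+1}$ term.
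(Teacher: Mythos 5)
Your proposal is correct and matches the paper's intended argument: the corollary is stated there without a written proof, precisely because it follows by the substitution of $T_n(x)=x(x+1)^n$ and $F_n(x)=(x+1)^n+x^2-1$ into \eqref{eq:uptaunx} that you carry out, with the coefficients of $(x+1)^n$ merging into $2(x+1)$. Your separate check at $n=0$ against $\uptau_0(x)=x^3+2x^2+x$ is exactly the right way to justify extending the range to $n\geq 0$.
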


By formulas \eqref{eq:uptaunx} and \eqref{eq:closedformtwistknot}, we deduce the recurrence that defines the coefficients of the polynomial $ \uptau_n(x) =\sum_{k\geq 0}\tau_{n,k}x^k $, namely

\begin{equation}\label{eq:taunk}
\begin{cases}
\tau_{n,0}=0,\ \tau_{0,3}=\tau_{1,3}=1, 				& n\geq 0;\\
\tau_{n,k}=f_{n,k-1}+2f_{n,k}+t_{n,k},	&0< k\leq n+1.
\end{cases}
\end{equation}
Next, let us arrange the coefficients in \eqref{eq:taunk} in \Tabs{tab:twistknot} for $ 0\leq n\leq k\leq 12 $, and identify the corresponding columns. 
\begin{table}[H]
\centering
$
\begin{array}{c|rrrrrrrrrrrrr}
n\ \backslash\ k	&0	&1	&2	&3	&4	&5	&6	&7	&8	&9	&10	&11	&12\\
\midrule
0	&0	&1	&2	&1	&	&	&	&	&	&	&	&	&\\
1	&0	&3	&4	&1	&	&	&	&	&	&	&	&	&\\
2	&0	&5	&8	&3	&	&	&	&	&	&	&	&	&\\
3	&0	&7	&14	&9	&2	&	&	&	&	&	&	&	&\\
4	&0	&9	&22	&21	&10	&2	&	&	&	&	&	&	&\\
5	&0	&11	&32	&41	&30	&12	&2	&	&	&	&	&	&\\
6	&0	&13	&44	&71	&70	&42	&14	&2	&	&	&	&	&\\
7	&0	&15	&58	&113	&140	&112	&56	&16	&2	&	&	&	&\\
8	&0	&17	&74	&169	&252	&252	&168	&72	&18	&2	&	&	&\\
90	&0	&19	&92	&241	&420	&504	&420	&240	&90	&20	&2	&	&\\
10	&0	&21	&112	&331	&660	&924	&924	&660	&330	&110	&22	&2	&\\
11	&0	&23	&134	&441	&990	&1584	&1848	&1584	&990	&440	&132	&24	&2
\end{array}
$
\caption{Array value of the coefficients of $ \uptau_n(x) $, $ 0\leq n \leq k\leq 11 $.}
\label{tab:twistknot}
\end{table}

\begin{itemize}
\item Again, $ \tau_{n,0}=0 $, $ n\geq0 $, which is the common constraint.
\item We have $ \tau_{n,1}=\seqnum{A005408}(n)=2n+1 $, $ n\geq 0 $.
\item For $ n\geq 0 $, we find $ \tau_{n,2}=\seqnum{A014206}(n)=n^2+n+2$, which is the maximal number of regions into which the plane is divided by $ n+1 $ circles in general arrangement. We give further details in the next section.
\item When $ k=3 $, the corresponding column is defined by the initial value $\tau_{0,3}=1 $ and the $n $-th term $ \tau_{n,3}=\seqnum{A064999}(n)=\frac{1}{3}\big(n^3-n+3\big)$, $ n\geq 1 $. 
\item Finally, when $ k\geq 4 $, we have $ \tau_{n,k}=2\binom{n+1}{k}$, $ n\geq k-1 $. This is the usual binomial coefficients, horizontally shifted and doubled.
\end{itemize}

\begin{remark}	
From \Tabs{tab:twistloop} and \Tabs{tab:gpfoil}, we read 
\begin{equation}\label{eq:T1F0}
T_1(x)=F_1(x)=x^2+x,
\end{equation}
and from \Tabs{tab:twistloop} and \Tabs{tab:twistknot},
\begin{equation}\label{eq:T2tau0}
T_2(x)=\uptau_0(x)=x^3+2x^2+x
\end{equation}
Also, checking \Tabs{tab:gpfoil} and \Tabs{tab:twistknot}, we have
\begin{equation}\label{eq:F3tau1}
F_3(x)=\uptau_1(x)=x^3+4x^2+3x.
\end{equation}
We can explain these equalities by introducing the following definition.
\end{remark}	

\begin{definition}[Denton and Doyle \cite{DD}] Let us draw the shadow diagram on a sphere. When we have a loop on the outside edge of the diagram, we can redraw this loop around the other side of the diagram by pulling the entire loop around the far side of the sphere without affecting the constraints on any of the already existing crossings (see \Figs{Fig:0SMove}). We call the move a \textit{type 0 move on the sphere}, denoted $ 0S^2 $. 
\begin{figure}
\centering
\includegraphics[width=.7\linewidth]{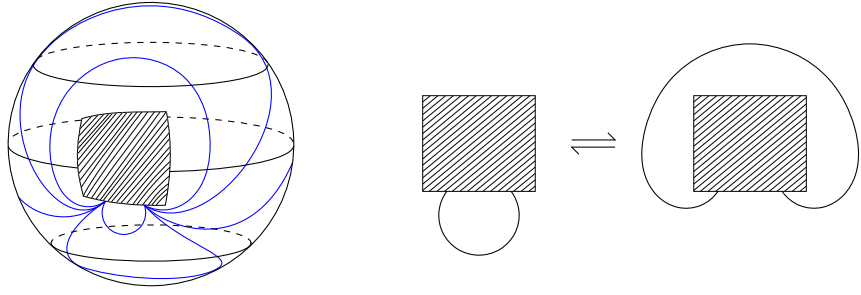}
\caption{Denton--Doyle type $ 0 $ move on the sphere.}
\label{Fig:0SMove}
\end{figure}
\end{definition}

Since this particular move does not remove nor create a crossing, the number of region remains intact. Knots which are planar isotopic or related by a sequence of $ 0S^2 $ moves then have the same state diagrams (modulo some rearrangements), and thus, the same generating polynomial. The equalities \eqref{eq:T1F0}, \eqref{eq:T2tau0} and \eqref{eq:F3tau1} result from this property (see \Figs{Fig:EquivalentKnot}).
\begin{figure}[H]
\centering
\includegraphics[width=\linewidth]{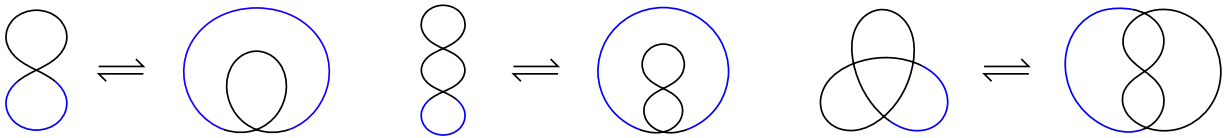}
\caption{Knots $ T_1\equiv F_1$, $ T_2\equiv \uptau_0$ and $ T_3\equiv \uptau_1$ under $ 0S^2 $ move.}
\label{Fig:EquivalentKnot}
\end{figure}

\section{A particular arrangement of circles}\label{Sec:Rosette}
Let $ \mathcal{U} $ denote an oriented unit circle of center $ O $ in the Euclidean space. We shall refer to this circle as \textit{reference circle}. Throughout this paper, we let $ \mathfrak{C} $ denote a finite family of circles with radius $ r>1 $ and centered at the border of $ \mathcal{U} $. We additionally impose that circles in $ \mathfrak{C} $ are congruent and non-concentric. We let $ \mathcal{C}_n $, $ n>0 $, denote a circle in $ \mathfrak{C} $, of center $ C_n $, and we refer to the interior of that circle as $ \mathcal{D}_n$, i.e., the closed disk bounded by that circle.

Let $ n>0 $, we say that $ n $ circles are in general arrangement when
\begin{itemize}
\item they intersect pairwise, that is, if no two of them are tangent and none of them lies entirely within or outside of another one;
\item no three of them share a common point.
\end{itemize}

Circles in general arrangement divide the plane into a maximum number of regions that is given by
\begin{equation}\label{eq:planedivision}
P(n)=n^2-n+2.
\end{equation}
For $ n=1,2,3,\ldots $ formula \eqref{eq:planedivision} gives $ 2,4,8,14,22,32,44,58,\ldots $ (sequence \seqnum{A0142016}).

The aim of the present section is to show that the circles in $ \mathfrak{C} $ are in general arrangement. We begin with the following classic result.

\begin{theorem}[{\cite[p.\ 96]{CR}}]
The perpendicular bisector of a chord passes through the center of the circle.
\end{theorem}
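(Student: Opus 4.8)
The plan is to reduce the statement to the defining property of the perpendicular bisector as the locus of points equidistant from the two endpoints of the chord. Denote the chord by $AB$, with both endpoints lying on a circle of center $O$ and radius $r$, and recall that the perpendicular bisector of a segment is precisely the set of all points in the plane equidistant from the segment's two endpoints.

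First I would note that since $A$ and $B$ both lie on the circle, we have $|OA|=|OB|=r$. Thus the center $O$ is equidistant from the endpoints of the chord. Invoking the characterization above, $O$ therefore lies on the perpendicular bisector of $AB$, which is exactly the claim. This is the entire argument in its shortest form.

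If a purely synthetic proof via congruent triangles is preferred instead, I would let $M$ be the midpoint of the chord $AB$ and compare the triangles $OMA$ and $OMB$. These share the side $OM$, satisfy $|MA|=|MB|$ by the midpoint property, and satisfy $|OA|=|OB|=r$ since both endpoints lie on the circle. By the side--side--side criterion the two triangles are congruent, so $\angle OMA=\angle OMB$; being supplementary angles along the line $AB$, each must equal a right angle. Hence the line $OM$ is perpendicular to $AB$ at its midpoint, so it is the perpendicular bisector of $AB$ and it passes through $O$ by construction.

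The main obstacle here is essentially nonexistent, since this is a foundational fact of Euclidean geometry. The only point requiring care is to fix in advance which characterization of \emph{perpendicular bisector} is being used: once one adopts the equidistance description, the conclusion is immediate from $|OA|=|OB|$, and no computation is needed.
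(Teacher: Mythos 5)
Your proof is correct, but it takes a different route from the paper's. The paper argues by viewing the chord as one side of an arbitrary triangle inscribed in the circle and then invoking the concurrence theorem: the three perpendicular bisectors of a triangle's sides meet at a single point, the circumcenter, which here is the center of the circle. Your argument instead goes directly through the locus characterization of the perpendicular bisector as the set of points equidistant from the two endpoints: since $|OA|=|OB|=r$, the center $O$ lies on that locus, and you back this up with an explicit side--side--side congruence argument ($OMA \cong OMB$) in case the locus characterization itself needs justification. Your approach is more elementary and self-contained -- it uses only the definition of a circle and basic congruence, whereas the paper's proof presupposes the (stronger) concurrence of the three bisectors at the circumcenter. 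What the paper's version buys is brevity and a pleasing ``global'' picture; what yours buys is independence from any prior theorem about triangles, making it the more foundational of the two.
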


\begin{proof}
An elegant proof is to consider the chord of the circle to be the side of an arbitrary inscribed triangle whose three perpendicular bisectors must intersect in one point, the center of the circle.
\end{proof}

\begin{proposition}\label{Prop:int3}
Given three circles in $ \mathfrak{C} $, exactly one of the intersection points of the two circles lies at the interior of the third circle.
\end{proposition}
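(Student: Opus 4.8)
The plan is to reduce the whole statement to a single sign computation, using coordinates dictated by the preceding theorem on perpendicular bisectors. Label the three circles so that $\mathcal{C}_1,\mathcal{C}_2$ are the ``two circles'' and $\mathcal{C}_3$ is the ``third circle''. Since the centers $C_1,C_2,C_3$ all lie on the reference circle $\mathcal{U}$, the segment $C_1C_2$ is a chord of $\mathcal{U}$, so its perpendicular bisector passes through the center $O$. Because $\mathcal{C}_1$ and $\mathcal{C}_2$ are congruent, their two intersection points lie precisely on this perpendicular bisector and are symmetric about the line $C_1C_2$. First I would record that the two circles genuinely meet in two points: writing $2a=|C_1C_2|\le 2$ (a chord of the unit circle) and $r>1$, the half-distance satisfies $a\le 1<r$, so the intersection persists.

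Next I would fix coordinates with the midpoint $M$ of $C_1C_2$ at the origin, the line $C_1C_2$ as the $x$-axis and the perpendicular bisector as the $y$-axis. Then $C_1=(-a,0)$, $C_2=(a,0)$, the two intersection points are $P=(0,h)$ and $Q=(0,-h)$ with $h^2=r^2-a^2$, and since $O$ lies on the $y$-axis at distance $\sqrt{1-a^2}$ from $M$ we may write $O=(0,d)$ with $d^2=1-a^2$. The center of the third circle is $C_3=(x_3,y_3)$, subject to the single constraint $|OC_3|=1$, i.e.\ $x_3^2+y_3^2=a^2+2dy_3$.

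The heart of the argument is to show that $P$ and $Q$ lie on opposite sides of $\mathcal{C}_3$, which is equivalent to the product $(|C_3P|^2-r^2)(|C_3Q|^2-r^2)$ being negative. Expanding and abbreviating $A=x_3^2+y_3^2+h^2-r^2$, the two factors are $A\mp 2y_3h$, so the product equals $A^2-4y_3^2h^2$. Here the constraint on $C_3$ collapses $A$ to $A=2dy_3$, and the product becomes $4y_3^2(d^2-h^2)=4y_3^2(1-r^2)$ after substituting $d^2=1-a^2$ and $h^2=r^2-a^2$. Since $r>1$ this is strictly negative as soon as $y_3\ne 0$, which is exactly the statement that precisely one of $P,Q$ is interior to $\mathcal{C}_3$.

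The main obstacle is disposing of the degenerate case $y_3=0$ and checking it cannot occur for a genuine third circle. I would argue that $y_3=0$ together with $|OC_3|=1$ forces $x_3^2=1-d^2=a^2$, hence $C_3\in\{C_1,C_2\}$, contradicting that the three circles are distinct; moreover $y_3=0$ would place both $P$ and $Q$ on $\mathcal{C}_3$, making all three circles concurrent, which the general-position hypothesis forbids. The only genuinely delicate point in the whole proof is the algebraic cancellation $A=2dy_3$ and the resulting identity $d^2-h^2=1-r^2$; everything hinges on the fact that all centers lie on $\mathcal{U}$ and the radii are equal, so once the coordinates are set up the sign is forced by the hypothesis $r>1$ alone.
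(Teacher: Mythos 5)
Your proof is correct, and it takes a genuinely different route from the paper's. The paper argues synthetically: the intersection points $I,I'$ of $\mathcal{C}_1,\mathcal{C}_2$ lie on a line through $O$ with $O$ strictly between them (in your coordinates this is exactly the fact that $P$, $O$, $Q$ all sit on the $y$-axis with $|d|<h$), the axis of symmetry $(\Delta)$ exchanging $\mathcal{C}_1$ and $\mathcal{C}_3$ also passes through $O$, hence $I$ and $I'$ fall on opposite sides of $(\Delta)$, and a reflection argument is then invoked to place exactly one of them inside $\mathcal{C}_3$. You instead reduce the claim to the sign of $\bigl(|C_3P|^2-r^2\bigr)\bigl(|C_3Q|^2-r^2\bigr)$, which the single constraint $|OC_3|=1$ collapses to $4y_3^2(1-r^2)$, strictly negative once $y_3\neq 0$; and you correctly dispose of $y_3=0$, since it would force $C_3\in\{C_1,C_2\}$, contradicting that the circles of $\mathfrak{C}$ are non-concentric. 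The paper's approach buys brevity and a picture-level explanation tied to the chord--bisector theorem it has just stated, but its final step (``owing to the property of the reflection\ldots'') is asserted rather than justified; your computation is airtight, isolates $r>1$ as the only hypothesis driving the strict sign, and even yields the paper's subsequent theorem as a byproduct: with distinct centers the product can vanish only when $r=1$, so three circles of $\mathfrak{C}$ could share a common point only if $r=1$.
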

\begin{proof}
Let $ \mathcal{C}_1 $ and $ \mathcal{C}_2 $ denote the two circles, and let $ I $ and $ I'$ be their intersection points. The circles are the mirror-image symmetry of each other and whose line of symmetry is $ \left(II'\right) $. At this stage, the points $ I,O,I' $ are, in this order, collinear since the line $ \left(II'\right) $ is also the perpendicular bisector of the chord $ [C_1C_2] $ (see \Figs{Fig:3CirclesIntersection}). 
\begin{figure}
\centering
\includegraphics[width=0.7\linewidth]{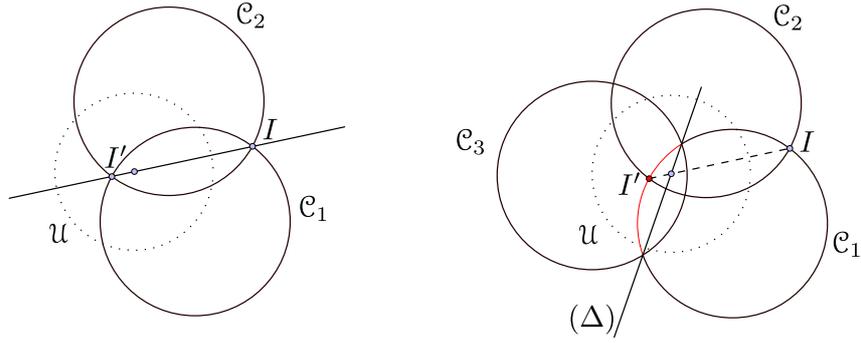}
\caption{The circle $ \mathcal{C}_3 $ contains one of the intersection points of the circles $ \mathcal{C}_1 $ an $ \mathcal{C}_2 $.}
\label{Fig:3CirclesIntersection}
\end{figure}
The same scheme applies when we add a third circle $ \mathcal{C}_3 $. Let $ (\Delta) $ denote the line of symmetry of the circles $ \mathcal{C}_1 $ and $ \mathcal{C}_3 $. The line $(\Delta) $ passes through $ O $ so that the points $ I $ and $ I' $ are in either side of $ (\Delta) $. Now, owning to the property of the reflection with respect to the line $ (\Delta) $, only one of $ I $ or $ I' $ is contained in $ \mathcal{C}_3 $.
\end{proof}

\begin{proposition}\label{Prop:circleinside}
Given three circles in $ \mathfrak{C} $, an intersection point lies at the interior of the circle centered at the same region that is bounded by the secant which joins the centers of the other two circles.
\end{proposition}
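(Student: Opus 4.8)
The plan is to reduce the statement to a single signed-distance comparison: the equal-radius hypothesis forces the two intersection points to sit symmetrically about the secant, and then Proposition~\ref{Prop:int3} is exactly what converts ``nearer to $C_3$'' into ``interior to $\mathcal{C}_3$''.

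First I fix the two circles whose intersection is in question, say $\mathcal{C}_1$ and $\mathcal{C}_2$, with intersection points $I$ and $I'$, and regard $\mathcal{C}_3$ as the third circle, with centers $C_1,C_2,C_3$. As already observed in the proof of Proposition~\ref{Prop:int3}, the line $(II')$ is the perpendicular bisector of the chord $[C_1C_2]$. Consequently $I$ and $I'$ lie on the line through the midpoint $M$ of $[C_1C_2]$ perpendicular to the secant $(C_1C_2)$, and $M$ is their common midpoint; in other words, $I$ and $I'$ are reflections of one another across the secant $(C_1C_2)$ and hence lie on opposite sides of it. This symmetry is precisely where the congruence of the circles in $\mathfrak{C}$ is used.

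Next I set up coordinates with the secant $(C_1C_2)$ as the $x$-axis and $M$ at the origin, so that $I=(0,b)$ and $I'=(0,-b)$ with $b=|MI|>0$, while $C_3=(x_3,y_3)$. A one-line computation gives
\begin{equation*}
|C_3I|^2-|C_3I'|^2=(y_3-b)^2-(y_3+b)^2=-4\,b\,y_3,
\end{equation*}
so the sign of $y_3$, that is, the side of the secant on which $C_3$ lies, decides which of $I,I'$ is strictly nearer to $C_3$: the intersection point lying on the same side of $(C_1C_2)$ as $C_3$ is the closer one. (Coordinate-free, this is simply $|C_3I|^2-|C_3I'|^2=-4\,b\,s$, where $s$ is the signed distance from $C_3$ to the secant.)

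Finally I bridge ``nearer'' to ``interior''. By Proposition~\ref{Prop:int3}, exactly one of $I,I'$ lies inside $\mathcal{C}_3$, a disk of radius $r$ centered at $C_3$. If the farther point were the interior one, then the nearer point would lie inside $\mathcal{C}_3$ as well, contradicting that exactly one of them is interior. Hence the interior point is the nearer point, which by the previous step is the one on the same side of $(C_1C_2)$ as $C_3$; this is the asserted conclusion. The step I expect to carry the weight is this last appeal to Proposition~\ref{Prop:int3}: the distance comparison by itself only tells us which intersection point is closer, and it is the ``exactly one inside'' fact that upgrades ``closer'' to ``interior''.
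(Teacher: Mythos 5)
Your proof is correct, and it follows the paper's overall skeleton --- pin down which of the two intersection points lies inside $\mathcal{C}_3$ by appealing to Proposition~\ref{Prop:int3} --- but your middle step is genuinely different and, in fact, more rigorous than the paper's. The paper fixes the configuration of its figure, asserts without computation that ``we clearly see that $C_3I>r$'' (an absolute distance bound justified only by the picture), and then needs only the existence half of Proposition~\ref{Prop:int3} to force the other point $I'$ to be the interior one. You never estimate any distance against $r$: you prove the purely relative statement that the intersection point on $C_3$'s side of the secant is strictly closer to $C_3$ (via the reflection symmetry of $I,I'$ across $(C_1C_2)$ and the identity $|C_3I|^2-|C_3I'|^2=-4bs$ in your notation), and then use the full ``exactly one'' content of Proposition~\ref{Prop:int3} to convert ``closer'' into ``interior''. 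Your route buys figure-independence and removes the WLOG step; its only cost is leaning on the uniqueness as well as the existence part of Proposition~\ref{Prop:int3}. One attribution quibble: the symmetry of $I$ and $I'$ across the secant is not ``precisely where the congruence is used'' --- it holds for any two intersecting circles, since $C_1$ and $C_2$ are each equidistant from $I$ and $I'$, so $(C_1C_2)$ is the perpendicular bisector of $[II']$; congruence is only what makes $(II')$ pass through the midpoint of $[C_1C_2]$ (and through $O$), a fact your computation never needs. You might also note explicitly that $C_3$ cannot lie on the secant (it lies on $\mathcal{U}$ and differs from $C_1,C_2$), so the signed distance $s$ is indeed nonzero and the strict comparison goes through.
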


\begin{proof}
Without loss of generality, we assume that we have the arrangement of circles as illustrated in \Figs{Fig:InclusionProof}.
Let the secant $ (C_1C_2) $ which joins the center of the circles $ \mathcal{C}_1$ and $ \mathcal{C}_2 $ divides the reference circle into two oriented arcs, and let $ I,I' $ be the corresponding intersections. This secant also divides the plane into two regions such that each of them contains one intersection point of these two circles. Let the third circle $ \mathcal{C}_3 $ be centered at $ \arc{C_2C_1}$. We clearly see that $ C_3I>r $. Since an intersection point has to lie in the interior of $ \mathcal{C}_3 $, then the only candidate must be $ I' $ which is located at the same region as $ \arc{C_2C_1}$.	
\begin{figure}
\centering
\includegraphics[width=0.425\linewidth]{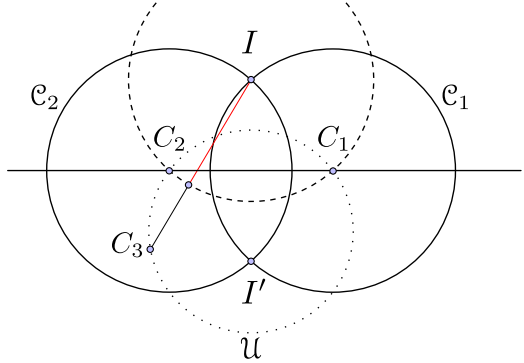}
\caption{Circle centered at $ C_3 $ only contains the nearest intersection point of the circle $ \mathcal{C}_1 $ and $ \mathcal{C}_2 $.}
\label{Fig:InclusionProof}
\end{figure}
\end{proof}
Notice that the result remains valid when the centers of the two circles are antipodal.
\begin{theorem}
No three circles in $ \mathfrak{C}$ share a common point.
\end{theorem}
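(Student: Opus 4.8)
The plan is to argue by contradiction and reduce the statement to the two preceding propositions. Suppose some three circles $\mathcal{C}_1,\mathcal{C}_2,\mathcal{C}_3\in\mathfrak{C}$ pass through a common point $P$. Since the circles are congruent and non-concentric, any two of them meet in exactly two points, so $P$ is one of the two intersection points of the pair $\mathcal{C}_1,\mathcal{C}_2$, and at the same time $P$ lies on $\mathcal{C}_3$, i.e.\ $C_3P=r$. My goal is to show that this last equality can never hold, because both intersection points of $\mathcal{C}_1$ and $\mathcal{C}_2$ are at distance strictly different from $r$ from the center $C_3$.

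First I would recall, via Proposition~\ref{Prop:int3}, that of the two intersection points $I,I'$ of $\mathcal{C}_1$ and $\mathcal{C}_2$, exactly one lies in the interior of $\mathcal{C}_3$; relabelling if necessary, say $C_3I'<r$. Next I would invoke the mechanism behind Proposition~\ref{Prop:circleinside}: the secant $(C_1C_2)$ separates $I$ and $I'$ into the two regions it cuts out, and the intersection point lying in the region opposite to the arc carrying $C_3$ --- namely $I$ --- satisfies $C_3I>r$ strictly. Thus one intersection point is strictly inside $\mathcal{C}_3$ and the other is strictly outside it.

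Combining these two facts, neither $I$ nor $I'$ lies at distance exactly $r$ from $C_3$, so $\mathcal{C}_3$ passes through neither intersection point of $\mathcal{C}_1$ and $\mathcal{C}_2$. In particular $P$, being one of $I,I'$, cannot satisfy $C_3P=r$, a contradiction. Since the three circles were arbitrary, no three circles in $\mathfrak{C}$ share a common point.

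The step I expect to be the main obstacle is establishing the strictness of the inequalities, since a common point is precisely the borderline case $C_3P=r$. Proposition~\ref{Prop:int3} only guarantees that one point lies in the \emph{open} interior, so by itself it does not exclude the remaining point from lying on the boundary circle; I must therefore rely on Proposition~\ref{Prop:circleinside} to certify that the second point is strictly exterior rather than possibly on $\mathcal{C}_3$. This is exactly where the hypothesis $r>1$ and the general-position assumptions enter, and I would carefully retrace the estimate $C_3I>r$ from the proof of Proposition~\ref{Prop:circleinside} to confirm that it is strict for every admissible placement of the three centers on the border of $\mathcal{U}$, including the antipodal configuration noted in the remark following that proposition.
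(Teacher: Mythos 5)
Your route is genuinely different from the paper's, but it has a gap exactly at the point you flagged as the main obstacle, and the gap is not closed by the two propositions you invoke. The paper defines the ``interior'' of a circle to be the \emph{closed} disk $\mathcal{D}_n$ it bounds, so Proposition~\ref{Prop:int3} does not place one intersection point in the open interior of $\mathcal{C}_3$, as your argument requires; it only places it in $\mathcal{D}_3$, which does not exclude that this point lies on $\mathcal{C}_3$ itself --- precisely the situation you must rule out. The strict estimate you borrow from the proof of Proposition~\ref{Prop:circleinside}, namely $C_3I>r$, concerns only the intersection point $I$ on the opposite side of the secant $(C_1C_2)$ from $C_3$; it says nothing about the same-side point $I'$. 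Consequently, if the putative common point is $I'$ (i.e.\ $C_3I'=r$), both propositions are satisfied exactly as stated ($I$ outside the closed disk, $I'$ in it) and no contradiction materializes. To repair this you would have to establish the strict inequality $C_3I'<r$ on your own, e.g.\ by coordinates: with $C_1=(\cos\alpha,\sin\alpha)$, $C_2=(\cos\alpha,-\sin\alpha)$, $C_3=(\cos\beta,\sin\beta)$ where $\cos\beta<\cos\alpha$, and $s=\sqrt{r^2-\sin^2\alpha}$, one finds $C_3I'^2-r^2=2(\cos\alpha-\cos\beta)(\cos\alpha-s)<0$ since $r>1$ forces $s>|\cos\alpha|$. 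But note that the companion bound $C_3I>r$ is itself only asserted from a figure (``we clearly see'') in the paper, so at that point you are proving both key estimates from scratch and the detour through the propositions buys nothing.

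For comparison, the paper's proof is self-contained and two lines long: if three circles of $\mathfrak{C}$ shared a point $P$, their three centers would all lie at distance $r$ from $P$, so $P$ would be equidistant from three points of the unit circle $\mathcal{U}$; by the chord-bisector theorem quoted just before (uniqueness of the circumcenter), $P$ must be the center $O$ of $\mathcal{U}$ and hence $r=1$, contradicting $r>1$. This argument is independent of Propositions~\ref{Prop:int3} and~\ref{Prop:circleinside} and is immune to the open/closed boundary issue, since it never needs to decide on which side of $\mathcal{C}_3$ the intersection points fall. The clean way to see your inside/outside dichotomy is as a \emph{consequence} of the theorem (once no intersection point can lie on a third circle, ``in the closed disk'' and ``in the open disk'' coincide), rather than as a route to proving it.
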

\begin{proof}
If three circles in $ \mathfrak{C}$ share a common point, then their centers are respectively equidistant to this same point. Necessarily we have $ r=1 $, which is a contradiction.
\end{proof}

\begin{definition}
Let $ \mathfrak{C}_n:=\{\mathcal{C}_1, \mathcal{C}_2,\mathcal{C}_3,\ldots,\mathcal{C}_n\}$, $ n>1 $,
with the circles being arranged in counter-clockwise ascending order with respect to their index. A \textit{lune} \cite{APS} is the region -- the crescent-shaped slice -- delimited by $ \mathcal{D}_{i}\setminus \mathcal{D}_{i-1}$, i.e., the region inside one but outside the other. There are $ n $ lunes in this arrangement, namely $ \mathcal{D}_{i}\setminus \mathcal{D}_{i-1}$ for $i=2,\ldots n$ and $\mathcal{D}_{1}\setminus \mathcal{D}_{n}$. The condition we impose on the arrangement of circles is to ensure no circles are centered along the arc $ \arc{C_{i}C_{i-1}} $.
\end{definition}

\begin{proposition}
No arc of circles meet inside a lune.
\end{proposition}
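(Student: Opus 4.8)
The content of the statement is that no two arcs of circles of $\mathfrak{C}$ cross one another at a point lying in the interior of a lune; equivalently, writing $L_i:=\mathcal{D}_i\setminus\mathcal{D}_{i-1}$ (indices read cyclically, so that $\mathcal{D}_1\setminus\mathcal{D}_n$ is covered as well), no intersection point of two circles of $\mathfrak{C}$ lies in the interior of $L_i$. The plan is to take an arbitrary intersection point $X=\mathcal{C}_a\cap\mathcal{C}_b$, split into the cases $\{a,b\}\cap\{i-1,i\}\neq\emptyset$ and $\{a,b\}\cap\{i-1,i\}=\emptyset$, and settle the second (main) case with Proposition~\ref{Prop:circleinside}.

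First I would dispose of the trivial case: if one of $a,b$ equals $i-1$ or $i$, then $X$ lies on $\mathcal{C}_{i-1}$ or $\mathcal{C}_i$, hence on the boundary of the crescent $L_i$ and not in its interior, so it does not count. This leaves the circles $\mathcal{C}_a,\mathcal{C}_b$ with $a,b\notin\{i-1,i\}$.

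For the main case I would use the fact that the two points of $\mathcal{C}_a\cap\mathcal{C}_b$ are mirror images across the secant $(C_aC_b)$ --- the symmetry already exploited in the proof of Proposition~\ref{Prop:int3} --- so that they lie one in each of the two open half-planes bounded by $(C_aC_b)$. Proposition~\ref{Prop:circleinside} then tells us that the point interior to a third disk $\mathcal{D}_k$ is exactly the one lying in the same half-plane as the center $C_k$. Applying this to $k=i$ and to $k=i-1$, I would read off that a point of $\mathcal{C}_a\cap\mathcal{C}_b$ can be inside $\mathcal{D}_i$ while being outside $\mathcal{D}_{i-1}$ only when $C_i$ and $C_{i-1}$ fall in opposite half-planes of $(C_aC_b)$. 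Here one must keep the degeneracies under control: $C_i,C_a,C_b$ cannot be collinear, since a line meets $\mathcal{U}$ in at most two points, so ``same half-plane'' is well defined; and by the preceding theorem that no three circles of $\mathfrak{C}$ share a point, $X$ never lies on $\mathcal{C}_{i-1}$ or $\mathcal{C}_i$, so the inside/outside alternative is genuinely strict. Thus it remains only to prove that the chord $(C_aC_b)$ never separates $C_{i-1}$ from $C_i$.

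The concluding step is combinatorial and rests on consecutiveness. A chord $(C_aC_b)$ of $\mathcal{U}$ separates the boundary points $C_{i-1},C_i$ precisely when the four centers interleave around $\mathcal{U}$, that is, when one of $C_a,C_b$ lies on the arc $\arc{C_{i}C_{i-1}}$. But the arrangement was defined so that no circle is centered along $\arc{C_{i}C_{i-1}}$; hence $C_a$ and $C_b$ both lie on the complementary arc, the interleaving fails, and $(C_aC_b)$ leaves $C_{i-1}$ and $C_i$ on the same side. This contradicts the requirement extracted above, so no intersection point sits in the interior of $L_i$. I expect the main obstacle to be the middle step --- correctly converting ``inside $\mathcal{D}_i$ and outside $\mathcal{D}_{i-1}$'' into the half-plane condition through Proposition~\ref{Prop:circleinside}, while controlling the boundary and collinearity degeneracies; once that translation is secured, the separation-versus-interleaving fact finishes the argument cleanly.
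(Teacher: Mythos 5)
Your proof is correct and takes essentially the same route as the paper's: both arguments convert ``inside $\mathcal{D}_i$ but outside $\mathcal{D}_{i-1}$'' --- via Proposition~\ref{Prop:circleinside} together with the mirror symmetry underlying Proposition~\ref{Prop:int3} --- into the statement that $C_i$ and $C_{i-1}$ lie on opposite sides of the secant joining the two centers, and then contradict the lune condition that no circle is centered along the arc $\arc{C_iC_{i-1}}$. Your write-up is merely more explicit about the degenerate cases (boundary intersection points, collinearity), which the paper's proof glosses over.
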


\begin{proof}
Let $ k $ and $\ell $ be two nonnegative integers such that $ k<\ell $, and assume in the contrary that there exist a  lune $ \mathcal{D}_{i}\setminus \mathcal{D}_{i-1}$ that contains one of the intersection points of $ \mathcal{C}_k$ and $ \mathcal{C}_\ell $. Then the opposite intersection lies inside the circle $ \mathcal{C}_{i-1}$ by \hyperref[Prop:int3]{Proposition~\ref*{Prop:int3}}. Now, $ \mathcal{C}_k\in\mathcal{D}_i\setminus\mathcal{D}_{i-1} $ means that $ \mathcal{C}_k\in\mathcal{D}_i $, i.e., by \hyperref[Prop:circleinside]{Proposition~\ref*{Prop:circleinside}}, the circle $ \mathcal{C}_i $ is either centered at the arc $ \arc{C_kC_\ell} $ or $ \arc{C_\ell C_k} $. In any of these cases, one of the circle $ \mathcal{C}_k $ or $ \mathcal{C}_\ell $ is located along the arc $ \arc{C_iC_{i-1}} $, which is not conform to the definition of a lune.
\end{proof}

\begin{corollary}\label{cor:nregion}
Let $ n\geq2 $. Then each lune are divided into $ n-1 $ region. 
\end{corollary}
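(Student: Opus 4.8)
The plan is to fix a single lune, say $L=\mathcal{D}_i\setminus\mathcal{D}_{i-1}$, regard it as a topological disk, and count how many of the remaining $n-2$ circles cut across it and in what manner. Its boundary $\partial L$ consists of exactly two arcs: the portion of $\mathcal{C}_i$ lying outside $\mathcal{D}_{i-1}$ and the portion of $\mathcal{C}_{i-1}$ lying inside $\mathcal{D}_i$. These two arcs meet precisely at the two points of $\mathcal{C}_{i-1}\cap\mathcal{C}_i$, which I will call the corners of $L$.

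First I would show that each circle $\mathcal{C}_k$ with $k\neq i-1,i$ meets $\partial L$ in exactly two points, one in the interior of each bounding arc. Applying Proposition~\ref{Prop:int3} to the triple $\{\mathcal{C}_k,\mathcal{C}_i,\mathcal{C}_{i-1}\}$ with $\mathcal{C}_{i-1}$ as the third circle, exactly one of the two points of $\mathcal{C}_k\cap\mathcal{C}_i$ lies inside $\mathcal{D}_{i-1}$; hence the other lies outside $\mathcal{D}_{i-1}$, that is, on the $\mathcal{C}_i$-arc of $\partial L$. Symmetrically, applying the same proposition with $\mathcal{C}_i$ as the third circle shows that exactly one point of $\mathcal{C}_k\cap\mathcal{C}_{i-1}$ lies inside $\mathcal{D}_i$, that is, on the $\mathcal{C}_{i-1}$-arc of $\partial L$. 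These two boundary points are distinct from the corners, since a corner lying on $\mathcal{C}_k$ would force $\mathcal{C}_k,\mathcal{C}_{i-1},\mathcal{C}_i$ to share a point, contradicting the fact that no three circles of $\mathfrak{C}$ are concurrent.

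Since these two transversal crossings split $\mathcal{C}_k$ into two arcs, exactly one of which lies inside $L$, each such circle contributes a single crossing arc joining the two bounding arcs of $L$. There are $n-2$ such circles, hence $n-2$ crossing arcs. By the preceding proposition that no arcs meet inside a lune, these arcs are pairwise disjoint in the interior of $L$, and by non-concurrency their endpoints on $\partial L$ are all distinct. It then remains to invoke the elementary fact that $m$ pairwise disjoint chords of a disk with distinct endpoints cut it into $m+1$ regions: each chord is connected and meets no other, so it splits exactly one existing region in two and raises the count by one. Starting from the single region $L$ and adding the $n-2$ arcs yields $1+(n-2)=n-1$ regions, as claimed; the argument is identical for the remaining lune $\mathcal{D}_1\setminus\mathcal{D}_n$.

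I expect the main obstacle to be the careful bookkeeping in the second step, namely correctly matching each application of Proposition~\ref{Prop:int3} to the right bounding arc and confirming that the two counted crossings are transversal and interior to those arcs, rather than the final region count, which is routine once each circle is known to contribute exactly one chord.
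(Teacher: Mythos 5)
Your proposal is correct and takes essentially the same route as the paper: use Proposition~\ref{Prop:int3} to show each of the remaining $n-2$ circles contributes exactly one arc across the lune, use the preceding proposition that no arcs meet inside a lune to get disjointness, and count regions as arcs plus one. The only (harmless) difference is that you apply Proposition~\ref{Prop:int3} with $\mathcal{C}_{i-1}$ and $\mathcal{C}_i$ playing the role of the third circle to locate the two crossings of $\mathcal{C}_k$ with $\partial L$, whereas the paper applies it with $\mathcal{C}_k$ as the third circle to conclude that each remaining circle contains exactly one corner of the lune; your version spells out the transversality and endpoint-distinctness bookkeeping that the paper leaves implicit.
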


\begin{proof}
The border of a lune contains the two intersection points of its two circles. Therefore, there are $ n-2 $ circles that must contain one of these intersection points, and leaving at the same time $ n-2 $ non-crossing arcs through the lune. Consequently, the lune is divided into $ n-1 $ regions.
\end{proof}

\begin{corollary}
Let $ n\geq 1 $. The circles in $ \mathfrak{C}_n $ divide the plane into $ n^2-n+2 $ regions.
\end{corollary}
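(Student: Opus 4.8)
The plan is to obtain the region count by partitioning the plane into three types of pieces and then applying \hyperref[cor:nregion]{Corollary~\ref*{cor:nregion}}. The case $n=1$ is immediate, since a single circle yields $1^2-1+2=2$ regions. For $n\geq 2$ I would show that the plane splits into the unbounded exterior, a single central region common to all the disks, and the pieces lying inside the $n$ lunes, and then simply add up the counts.

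Two preliminary observations organize the decomposition. First, since $OC_i=1<r$ we have $O\in\mathcal{D}_i$ for every $i$, so the convex set $\bigcap_i\mathcal{D}_i$ is nonempty; its interior meets no circle and therefore forms a single region, the central region. Moreover each disk is convex and contains $O$, so $\bigcup_i\mathcal{D}_i$ is star-shaped about $O$, hence simply connected, which guarantees that the only region lying outside all of the disks is the single unbounded one (there are no extra bounded pockets). Second, for any point $p$ the distance from $p$ to the moving center running along $\mathcal{U}$ is a unimodal function of its angular position, so the indices $i$ with $p\in\mathcal{D}_i$ form a cyclically contiguous block. Consequently the set $S(R)$ of disks containing a given region $R$ is always a cyclic interval of indices.

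Granting contiguity, the classification is clean: $R$ is the exterior when $S(R)=\varnothing$, it is central when $S(R)=\{1,\dots,n\}$, and otherwise $S(R)$ is a proper nonempty cyclic block possessing a \emph{unique} index $i$ with $i\in S(R)$ and $i-1\notin S(R)$. That unique index assigns $R$ to exactly one lune $\mathcal{D}_i\setminus\mathcal{D}_{i-1}$, so the lunes are pairwise disjoint and together exhaust every region other than the central one and the exterior. Invoking \hyperref[cor:nregion]{Corollary~\ref*{cor:nregion}}, each of the $n$ lunes carries $n-1$ regions, and therefore the arrangement has
\begin{equation*}
n(n-1)+2=n^2-n+2
\end{equation*}
regions in all, in agreement with \eqref{eq:planedivision}.

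The step I expect to be the real obstacle is the contiguity claim, together with the disjointness of the lunes that it yields; everything after it is bookkeeping, while everything before it (the nonemptiness of the central region and the star-shapedness ruling out bounded exterior pockets) is short. A more economical but less self-contained alternative would be to note that the earlier results already establish that the circles of $\mathfrak{C}_n$ are in general arrangement, whence the maximal value \eqref{eq:planedivision} is attained at once; I would nonetheless prefer the lune argument, since it is the structure these lemmas were designed to expose and the one reused in the later bijection.
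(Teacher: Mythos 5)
Your proposal is correct and takes essentially the same route as the paper: decompose the plane into the $n$ lunes (each holding $n-1$ regions by Corollary~\ref{cor:nregion}), the single unbounded region, and the single central region, giving $n(n-1)+2 = n^2-n+2$. The only difference is that the paper asserts this decomposition outright, while you supply the justification that it is disjoint and exhaustive (the cyclic-contiguity of the index set $S(R)$ and the star-shapedness argument ruling out extra exterior pockets) --- a tightening of the same argument, not a different one.
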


\begin{proof}
There are $ n $ lunes on the plane, each of which consists of $ n-1 $ regions. Then, the total regions is $ n(n-1) $ plus the outer region -- the plane itself -- and the central ``hole'', i.e., the intersection of the interiors of all the circles in $ \mathfrak{C}_n $. Therefore, there are in sum $ n(n-1)+2 $ regions.
\end{proof}

We conclude that the circles in $ \mathcal{C}_n $ are in general arrangement. If the centers describe a regular polygon, then we call the motif defined by $ \mathfrak{C}_n $ a \textit{circular rosette}. Rosin \cite{Rosin} defines the circular rosette as a geometrical figure formed by taking copies of a circle and rotating them about a point -- the rosette's center. For example, we draw a circular rosette made up from $ 6 $ circles in \Figs{fig:rosette}. The number of regions defined by these $ 6 $ circles is then $ P(6)=32 $.
\begin{figure}
\centering
\includegraphics[width=0.35\linewidth]{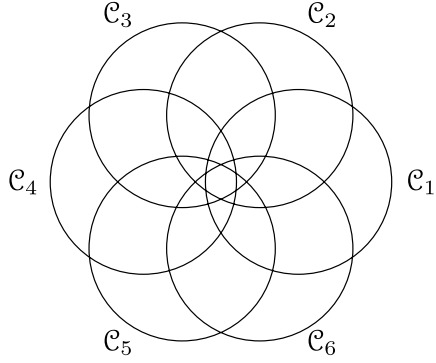}
\caption{A rosette of $ 6 $ circles}
\label{fig:rosette}
\end{figure}

\section{Encoding the partition of the plane}\label{Sec:RosetteEncoding}

We consider the binary alphabet $\{0,1\} $, and we let $ {\mathcal{P}}_n $ define the set of words of length $ n $ defined over the alphabet $ \{0,1\} $ such that a word $ \pi\in{\mathcal{P}}_n $ encodes a region of the plane described by the rosette of $ n $ circles. The usual operation of words is the \textit{concatenation}, that is, writing words as a compound. If we write a word as $ \pi=\sigma_1\sigma_2\cdots\sigma_n $, then the bit $ \sigma_{i}=1 $ (resp. $ \sigma_{i}=0 $) indicates that the concerned region lies inside (resp. outside) the $ i$-th circle. Moreover, we associate the plane without circles with the empty word $\varepsilon $. For $n=1,2,3 $ we encode the regions of the plane as in \Figs{Fig:123CirclesEncoding}. 

\begin{figure}
\centering
\includegraphics[width=\linewidth]{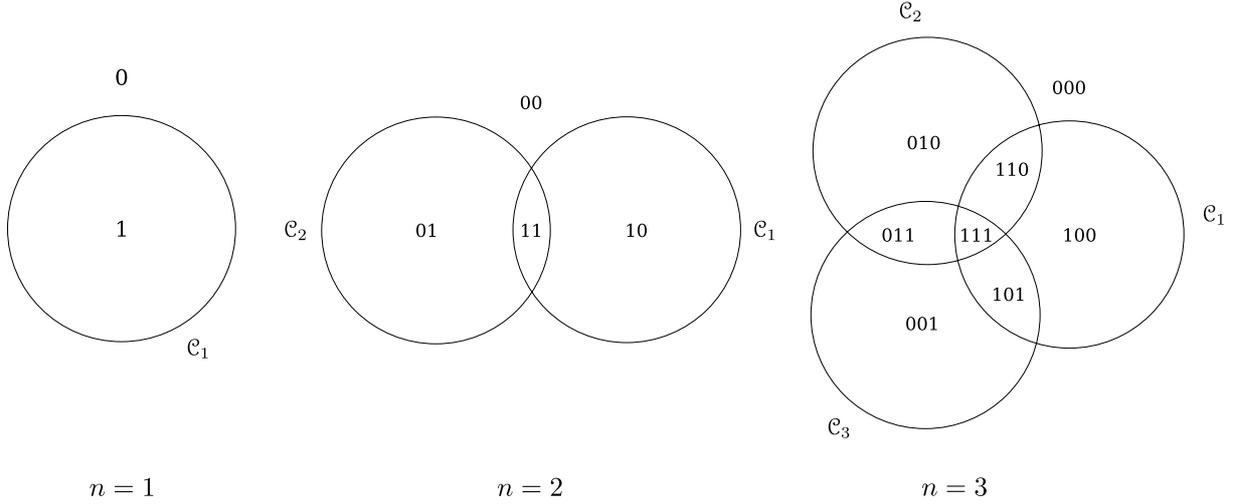}
\caption{The encodings of the partition of the plane defined by $ n $ circles, $ n=1,2,3 $.}
\label{Fig:123CirclesEncoding}
\end{figure}

\begin{proposition} Let $ \mathcal{L}_i $ denote the set of words associated with the regions of the lune $ \mathcal{D}_{i}\setminus \mathcal{D}_{i-1}$. Then a word $\pi_{i,j} $ in the set $ \mathcal{L}_i $ is of the form
\[\pi_{i,j}=\sigma_1\sigma_2\cdots\sigma_{i-1}\sigma_i\cdots\sigma_n,\ 0\leq j\leq n-2\]
where
\begin{displaymath}
\begin{cases}
\sigma_{i}= \sigma_{i+1}=\cdots=\sigma_{i+j}=1, & \textit{if $ i+j \leq n$},\\
\sigma_{i}= \sigma_{i+1}=\cdots=\sigma_{n}=\sigma_{1}=\sigma_{2}=\cdots=\sigma_{i+j-n}=1, & \textit{if $ n<i+j\leq n+i-2$},\\
\sigma_{\ell}=0, & \textit{elsewhere}.\\
\end{cases}
\end{displaymath}
\end{proposition}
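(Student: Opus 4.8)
The plan is to combine the geometric results already established—most notably \hyperref[cor:nregion]{Corollary~\ref*{cor:nregion}} and the proposition asserting that no arcs meet inside a lune—with a traversal argument that tracks how the inside/outside status of each circle changes as one crosses the lune.

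First I would fix the lune $\mathcal{D}_i\setminus\mathcal{D}_{i-1}$ and record the two bits that are forced for every one of its regions: since the lune lies inside $\mathcal{C}_i$ and outside $\mathcal{C}_{i-1}$, every word in $\mathcal{L}_i$ has $\sigma_i=1$ and $\sigma_{i-1}=0$. By \hyperref[cor:nregion]{Corollary~\ref*{cor:nregion}} the lune is cut by exactly $n-2$ arcs, one contributed by each circle $\mathcal{C}_m$ with $m\notin\{i-1,i\}$, and these arcs are pairwise non-crossing; hence they slice the lune into $n-1$ regions that are linearly ordered from one tip to the other, the tips being the two intersection points of $\mathcal{C}_i$ and $\mathcal{C}_{i-1}$.

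Next I would run the traversal. Consider a path inside the lune joining the tip adjacent to the outer (unbounded) region to the tip adjacent to the central hole. At the first tip the region is inside $\mathcal{C}_i$ only, so its inside-set is $\{i\}$; at the last tip the region borders the hole and is therefore inside every circle except $\mathcal{C}_{i-1}$, so its inside-set is $\{1,\dots,n\}\setminus\{i-1\}$. These two sets differ in exactly the $n-2$ indices $m\notin\{i-1,i\}$, and crossing each of the $n-2$ arcs toggles precisely the membership of its own circle. A counting argument then forces every crossing to be an \emph{entry}: if $e$ crossings are entries and $x$ are exits, then $e+x=n-2$ while $e-x$ equals the net size change $(n-1)-1=n-2$, whence $x=0$. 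Thus the inside-set can only grow, by one new circle per region, and the word $\pi_{i,j}$ of the $j$-th region carries $1$'s exactly on $\{i\}$ together with the first $j$ circles encountered—an initial segment of the crossing order.

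Finally I would identify that crossing order with the cyclic index order $i+1,i+2,\dots,n,1,\dots,i-2$, which is where the real work—and the main obstacle—lies. Using the counter-clockwise arrangement of the centers together with \hyperref[Prop:circleinside]{Proposition~\ref*{Prop:circleinside}}, which pins down on which side of a center-secant the relevant intersection point falls, I would show that the arc of $\mathcal{C}_{i+1}$ lies nearest the $\mathcal{C}_i$-side tip and that successive circles cut the lune progressively deeper, so their indices increase cyclically and terminate at $\mathcal{C}_{i-2}$ near the $\mathcal{C}_{i-1}$-side tip. Once this order is established, the initial segment of length $j+1$ starting at $i$ gives exactly the consecutive cyclic run $\sigma_i=\sigma_{i+1}=\cdots=\sigma_{i+j}=1$, and splitting according to whether the run passes position $n$ reproduces the two displayed cases together with the zeros elsewhere. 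The delicate point is proving the cyclic order rigorously from the congruence and counter-clockwise hypotheses alone: the monotone growth of the inside-set is essentially free from the counting step, but ruling out any out-of-order arc requires a careful comparison of intersection-point positions via \hyperref[Prop:circleinside]{Proposition~\ref*{Prop:circleinside}}, perhaps best organized as an induction on the number of circles.
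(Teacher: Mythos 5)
Your proposal is correct and its skeleton is the same as the paper's: the lune $\mathcal{D}_i\setminus\mathcal{D}_{i-1}$ is crossed by $n-2$ pairwise non-crossing arcs, one per circle $\mathcal{C}_m$ with $m\notin\{i-1,i\}$, and the words of $\mathcal{L}_i$ are read off from the order in which a path through the lune meets those arcs. The difference is in how much is actually argued. The paper's proof is two sentences: it asserts that the arcs are arranged ``with respect to the order of their center'' and reads the encodings off Figure~\ref{Fig:LuneEncoding}. You replace part of that figure-reading with genuine reasoning: the forced bits $\sigma_i=1$, $\sigma_{i-1}=0$; the identification of the two tips via Propositions~\ref{Prop:int3} and~\ref{Prop:circleinside} (one tip outside all the other circles, the other inside all of them); and, most usefully, the parity count $e+x=n-2$, $e-x=(n-1)-1$, forcing $x=0$, which proves that the inside-set grows by exactly one circle per region. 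That monotonicity is precisely what makes every word of $\mathcal{L}_i$ a single cyclic run of $1$'s, and the paper nowhere proves it -- it is visible only in the figure. Conversely, the step you flag as the remaining obstacle, namely that the entry order is the cyclic order $i+1,i+2,\ldots,n,1,\ldots,i-2$, is exactly the clause the paper asserts without proof; your plan to settle it with Proposition~\ref{Prop:circleinside}, by showing the lune-portions $\mathcal{D}_m\cap\left(\mathcal{D}_i\setminus\mathcal{D}_{i-1}\right)$ are nested according to the cyclic position of $C_m$, is the right tool, so your argument is no less complete than the published one. In short: same approach, but your version buys rigor on the monotone growth of the inside-sets at the cost of length, while the paper buys brevity by delegating both the monotonicity and the arc ordering to the picture.
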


\begin{proof}
Since there are $ n-2 $ arcs which pass through a lune, they are arranged with respect to the order of their center and  divide the lune into $ n- 1$ regions (see \Figs{Fig:LuneEncoding}). A region belongs to the interior of the circle $ \mathcal{C}_j $ for each $ j=i,i+1,\ldots,n-1,n,1,2,\ldots,i-2 $.
\begin{figure}
\centering
\includegraphics[width=0.775\linewidth]{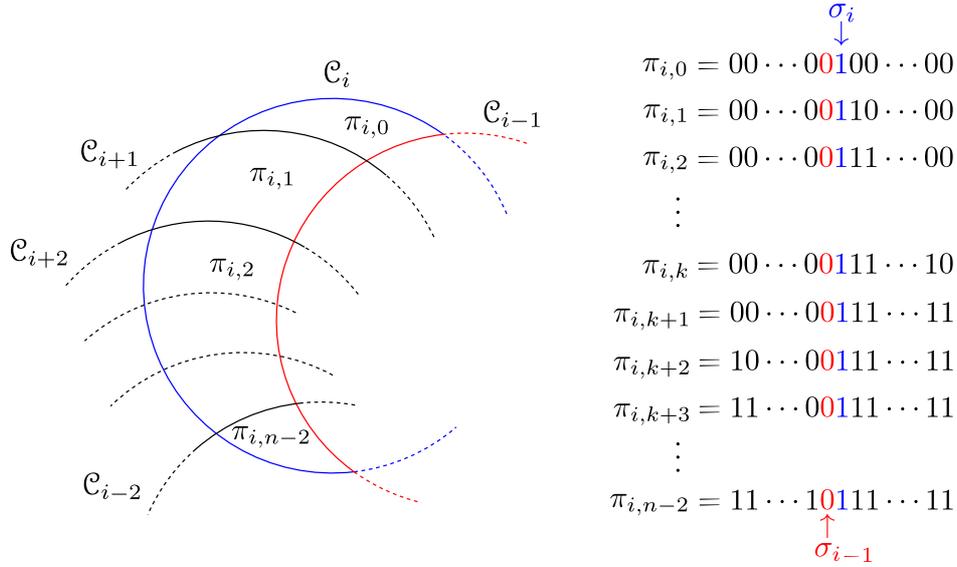}
\caption{The encodings of the regions of the lune $ \mathcal{D}_{i}\setminus \mathcal{D}_{i-1}$.}
\label{Fig:LuneEncoding}
\end{figure}
\end{proof} 

\begin{lemma}\label{lemma:newlune}
Given the family $ \mathfrak{C}_n $, by adding a new circle $ \mathcal{C}_{n+1} $ centered along the arc \textnormal{$ \arc{C_nC_1} $}, we add a new lune $ \mathcal{D}_{1}\setminus\mathcal{D}_{n+1} $, and create a new region inside each already existing lune. 
\end{lemma}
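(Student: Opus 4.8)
The plan is to compare the lune decomposition of $\mathfrak{C}_n$ with that of $\mathfrak{C}_{n+1}$, exploiting the fact that $\mathcal{C}_{n+1}$ is itself an admissible member of $\mathfrak{C}$: it has radius $r>1$ and its center $C_{n+1}$ lies on $\mathcal{U}$. Consequently every general-position result already established applies to the enlarged family, so no three of the $n+1$ circles are concurrent and $\mathfrak{C}_{n+1}$ is again in general arrangement. In particular, Corollary~\ref{cor:nregion} holds verbatim with $n$ replaced by $n+1$, so that in $\mathfrak{C}_{n+1}$ every lune is cut into $n$ regions. This is the fact I will play off against the count $n-1$ valid in $\mathfrak{C}_n$.

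First I would record how the ordering changes. Since $C_{n+1}\in\arc{C_nC_1}$, inserting it preserves the counter-clockwise ascending order, which becomes $C_1,C_2,\dots,C_n,C_{n+1}$. Hence the consecutive pairs that delimit lunes are $(\mathcal{C}_{i-1},\mathcal{C}_i)$ for $i=2,\dots,n+1$ together with the wrap-around pair $(\mathcal{C}_{n+1},\mathcal{C}_1)$. Comparing with $\mathfrak{C}_n$, the lunes $\mathcal{D}_2\setminus\mathcal{D}_1,\dots,\mathcal{D}_n\setminus\mathcal{D}_{n-1}$ survive with the same boundary circles, the old wrap-around lune $\mathcal{D}_1\setminus\mathcal{D}_n$ is superseded by $\mathcal{D}_{n+1}\setminus\mathcal{D}_n$, and a genuinely new crescent $\mathcal{D}_1\setminus\mathcal{D}_{n+1}$ appears; this last is the announced new lune.

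Next I would show that the arc of $\mathcal{C}_{n+1}$ crosses every already existing lune, so that each acquires exactly one further region. Fixing a lune $\mathcal{D}_i\setminus\mathcal{D}_{i-1}$, I apply Proposition~\ref{Prop:int3} to the triple $(\mathcal{C}_{i-1},\mathcal{C}_i,\mathcal{C}_{n+1})$: exactly one of the two intersection points of $\mathcal{C}_{i-1}$ and $\mathcal{C}_i$ lies inside $\mathcal{C}_{n+1}$, so the arc of $\mathcal{C}_{n+1}$ separates the two corner points of the lune and therefore passes through it. Thus $\mathcal{C}_{n+1}$ joins the $n-2$ circles that already traversed the lune in the count behind Corollary~\ref{cor:nregion}, raising the number of traversing arcs to $n-1$ and the number of regions from $n-1$ to $n$, that is, exactly one new region per lune. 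Proposition~\ref{Prop:circleinside}, applied with $\mathcal{C}_{n+1}$ centered on $\arc{C_nC_1}$, pins down which intersection point is captured and confirms that the new arc enters each surviving lune without violating the ``no arc meets inside a lune'' restriction.

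The delicate step, which I would treat separately, is the wrap-around region in which $\mathcal{C}_{n+1}$ is actually centered. There the circle does more than cross a lune: it splits the old wrap-around crescent so as to produce the new lune $\mathcal{D}_1\setminus\mathcal{D}_{n+1}$ while leaving $\mathcal{D}_{n+1}\setminus\mathcal{D}_n$ with the correct count of $n$ regions. Verifying, via Proposition~\ref{Prop:circleinside} and the centering hypothesis $C_{n+1}\in\arc{C_nC_1}$, that $\mathcal{C}_{n+1}$ sits in precisely the region that forces this split, rather than cutting a lune where no circle may be centered, is the main obstacle; I expect the cleanest route is to anchor the ``one transversal arc adds exactly one region'' principle on Corollary~\ref{cor:nregion} rather than re-derive it geometrically. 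Once this is settled, the bookkeeping closes: the $n$ surviving lunes each gain one region and the new lune contributes $n$ regions, matching the increment $P(n+1)-P(n)=2n$.
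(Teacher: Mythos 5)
Your proposal is correct, and it leans on the same two geometric ingredients as the paper's own proof, namely Proposition~\ref{Prop:int3} and the proposition that no arcs of circles meet inside a lune, but it reaches the conclusion by a noticeably different route. The paper argues directly, in two sentences: the arc of $\mathcal{C}_{n+1}$ cannot cross any existing arc inside a lune, yet $\mathcal{D}_{n+1}$ must contain one intersection point of every pair of old circles, so this arc slices exactly one new region off each existing lune, while the centering hypothesis produces the new lune $\mathcal{D}_{1}\setminus\mathcal{D}_{n+1}$. You instead observe that $C_{n+1}\in\arc{C_nC_1}$ makes $\mathfrak{C}_{n+1}$ an admissible arrangement still satisfying the lune condition, apply Corollary~\ref{cor:nregion} to both $\mathfrak{C}_n$ and $\mathfrak{C}_{n+1}$, and read off the increment by comparing the counts $n-1$ and $n$, using Proposition~\ref{Prop:int3} only to confirm that the new circle's arc really is what cuts each surviving lune; your closing check against $P(n+1)-P(n)=2n$ is a sanity test the paper omits. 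This comparison-of-counts route is arguably tighter, since the ``one transversal arc adds one region'' principle is invoked only where it has already been established rather than re-derived informally. One remark: the wrap-around step you single out as ``the main obstacle'' dissolves under your own strategy --- once $\mathfrak{C}_{n+1}$ satisfies the lune condition, Corollary~\ref{cor:nregion} applies to $\mathcal{D}_{n+1}\setminus\mathcal{D}_n$ and $\mathcal{D}_{1}\setminus\mathcal{D}_{n+1}$ like any other lunes and your bookkeeping already accounts for them; the paper quietly handles the same point only later, in the proof of Theorem~\ref{Thm:rulesplane}, by regarding $\mathcal{D}_{n+1}\setminus\mathcal{D}_n$ as an ``extension'' of the old lune $\mathcal{D}_{1}\setminus\mathcal{D}_n$.
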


\begin{proof}
The new circle is centered at the arc $ \arc{C_nC_1} $, therefore, the new lune $ \mathcal{C}_1\setminus\mathcal{C}_{n+1} $ is created (see \Figs{Fig:RosetteNewRegion}). Since an arc of this circle does not intersect any other arc inside a lune, and since the interior of this circle has to contain an intersection point of each pair of the other circles, necessarily a new region is created inside each existing lune. 
\begin{figure}
\centering
\includegraphics[width=0.7\linewidth]{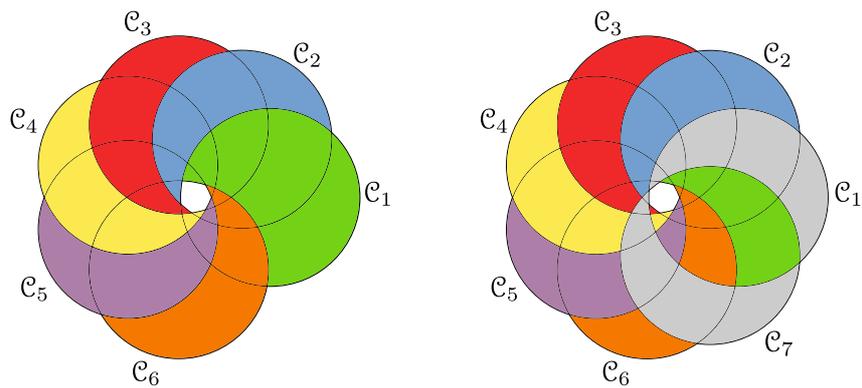}
\caption{The insertion of a new circle creates a new lune and introduce one new region inside each of the previous lunes.}
\label{Fig:RosetteNewRegion}
\end{figure}
\end{proof}

This lemma is the key ingredient in the following encoding formula.
\begin{theorem}\label{Thm:rulesplane}
Let $ \mathcal{P}_n $ be the set of words encoding the regions of the rosette of $ n $ circles. The set $ \mathcal{P}_{n+1} $ can be constructed with the help of the encodings of the previous region as well as those of the new regions that is obtained from the insertion of the new circle $ \mathcal{C}_{n+1}$. The set $ \mathcal{P}_{n+1} $ is constructed according as follows for $\pi=\sigma_1\sigma_2\cdots\sigma_n\in\mathcal{P}_n $. 
\begin{enumerate}
\item If $ \sigma_1=0 $ then $ \pi'=\pi0 \in\mathcal{P}_{n+1}$.
\item If $ \sigma_1=1 $ then $ \pi'=\pi1 \in\mathcal{P}_{n+1}$.
\item The additional regions are encoded by
\[
\begin{array}{rcl}
\left\{
\begin{array}{c}
0111\cdots111\\
0011\cdots111\\
0001\cdots111\\
\vdots\\

0000\cdots011\\
0000\cdots001\\
\end{array}
\right.
&and&
\left\{
\begin{array}{c}
1000\cdots000\\
1100\cdots000\\
1110\cdots000\\
\vdots\\
1111\cdots100\\
1111\cdots110\\
\end{array}
\right.
\end{array}
\]
\end{enumerate}
\end{theorem}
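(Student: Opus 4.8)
The plan is to translate the whole statement into one clean combinatorial fact about the words and then verify the three rules by a direct case analysis on the block of $1$'s. First I would record the characterisation of $\mathcal{P}_n$ that is already contained in the foregoing results: reading off the lune encodings obtained above, together with the outer region $0^{\,n}$ and the central hole $1^{\,n}$, a word of length $n$ belongs to $\mathcal{P}_n$ if and only if its set of $1$-positions is a single cyclically contiguous block in the cyclic order on $\{1,\dots,n\}$ (the empty block giving the outer region and the full block the central hole). Indeed each lune contributes precisely the blocks of length $1\le\ell\le n-1$ anchored at one fixed starting position, so over the $n$ lunes plus the two exceptional regions one recovers all $n(n-1)+2=n^2-n+2$ words. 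The theorem is then the assertion that the three rules reproduce exactly this description for $n+1$.

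The key remark is that a word produced by rules 1 and 2 always has equal first and last letters, whereas every word of rule 3 has $\tau_1\neq\tau_{n+1}$; hence the two families are automatically disjoint, and since the conditions $\tau_1=\tau_{n+1}$ and $\tau_1\neq\tau_{n+1}$ partition $\mathcal{P}_{n+1}$, it suffices to match each family against the corresponding half. For rules 1 and 2 I would analyse the map $\phi(\sigma_1\cdots\sigma_n)=\sigma_1\cdots\sigma_n\sigma_1$ and show it is a bijection from $\mathcal{P}_n$ onto $\{\pi'\in\mathcal{P}_{n+1}:\tau_1=\tau_{n+1}\}$; deleting the last letter provides the inverse, the only point to check being that the image is again a contiguous block. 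For rule 3 I would simply list the blocks with $\tau_1\neq\tau_{n+1}$: those with $\tau_1=0,\ \tau_{n+1}=1$ are exactly the words $0^{\,n+1-k}1^{\,k}$ with $1\le k\le n$ (the left column), and those with $\tau_1=1,\ \tau_{n+1}=0$ are exactly $1^{\,k}0^{\,n+1-k}$ (the right column). Combining the two matches then gives $\mathcal{P}_{n+1}$ as a disjoint union, which is the statement.

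I expect the one delicate point to be the well-definedness and surjectivity of $\phi$ in the wrap-around situation $\sigma_1=1$, where the block of $1$'s runs through the junction between positions $n$ and $1$. Appending a copy of $\sigma_1$ inserts, cyclically, a duplicate of the letter at position $1$ just before it; one must check that this neither splits the arc into two pieces nor inflates it to the full block by accident, so that the length is preserved when $\sigma_1=0$ and increased by exactly one (staying at most $n$) when $\sigma_1=1$, and symmetrically that dropping the duplicated letter restores a genuine element of $\mathcal{P}_n$. Geometrically this is the content of Lemma~\ref{lemma:newlune}: the appended letter records membership in $\mathcal{C}_{n+1}$, the right column is exactly the freshly created lune $\mathcal{D}_1\setminus\mathcal{D}_{n+1}$ (its $n$ regions corresponding to $\tau_1=1,\ \tau_{n+1}=0$), and the left column collects the remaining new regions lying in $\mathcal{D}_{n+1}\setminus\mathcal{D}_1$; keeping this picture in mind guides the case analysis, although the region count already forces the conclusion.
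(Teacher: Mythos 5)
Your proposal is correct, but it follows a genuinely different route from the paper's own proof. The paper argues geometrically: it re-invokes Lemma~\ref{lemma:newlune} to track what the insertion of $\mathcal{C}_{n+1}$ does to the existing arrangement, writing out the encodings lune by lune in a matrix --- the appended bit records membership in the new circle, the freshly created lune $\mathcal{D}_1\setminus\mathcal{D}_{n+1}$ yields the right-hand column of rule 3, and the single new region created inside each old lune yields the left-hand column. You instead package the earlier lune-encoding Proposition and the region count into a standalone combinatorial characterization --- $\mathcal{P}_m$ is exactly the set of length-$m$ words whose $1$'s form a single cyclically contiguous (possibly empty or full) block --- and then verify the rules purely at the level of words: rules 1--2 define the map $\pi\mapsto\pi\sigma_1$, which is a bijection onto the block words with equal first and last letters, while rule 3 lists precisely the block words with distinct first and last letters. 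This is legitimate and non-circular, since the Proposition and the region count are established before the theorem and independently of it, and the wrap-around case you flag does go through: appending $\sigma_1$ sends $0^a1^b0^c$ to $0^a1^b0^{c+1}$ and $1^a0^b1^c$ to $1^a0^b1^{c+1}$, both still block words, and deletion of the last letter inverts this. What your approach buys is a short, self-contained set identity that in effect anticipates the bitonic description the paper only extracts afterwards (Corollary~\ref{Cor:definitionPn}); what it forgoes is the geometric content of the paper's argument, which explains \emph{why} the appended bit is forced (which old regions end up inside $\mathcal{C}_{n+1}$) and identifies each rule-3 word with a concrete newly created region of the rosette.
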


\begin{proof} 
First, note that the rules applies for the inner region $ 1\ldots111 $ and the outer region $0\ldots000 $. Now, adding a new circle introduced the new lune $ \mathcal{D}_1\setminus\mathcal{D}_{n+1} $ which is crossed by $ n-1 $ arcs. The corresponding regions are therefore encoded by
\[
\left\{
\begin{array}{c}
1000\cdots00\textcolor{red}{0} \\
1100\cdots00\textcolor{red}{0} \\
1110\cdots00\textcolor{red}{0} \\
\vdots\\
1111\cdots10\textcolor{red}{0} \\
1111\cdots11\textcolor{red}{0} \\
\end{array}
\right.
\]

The $ (n+1) $-th circle also crosses all the previous lunes. Since $ C_{n+1} \in\arc{C_nC_1}$ and by \hyperref[lemma:newlune]{Lemma~\ref*{lemma:newlune}}, some of the regions of these lunes lie inside the disk $ \mathcal{D}_{n+1} $, and some are outside. Moreover, by the same \hyperref[lemma:newlune]{Lemma}, one additional region appears inside each lune. So excluding the lune $ \mathcal{D}_1\setminus\mathcal{D}_{n+1} $, the encodings of regions of the current lunes are obtained by the following formula for each $ i=2,3,\ldots,n+1 $:

\[  \begin{tikzpicture}
    \matrix [matrix of math nodes] (m)
{
\sigma_1 & \sigma_2 & \sigma_3 &\cdots & \sigma_{i-2} & \sigma_{i-1} & \sigma_{i} & \sigma_{i+1} & \sigma_{i+2} & \cdots & \sigma_{n-1} & \sigma_{n} & \sigma_{n+1} \\
0 & 0 & 0 & \cdots & 0 & \textcolor{red}{0} & \textcolor{blue}{1} & 0 & 0 & \cdots & 0 & 0 & \textcolor{red}{0}\\
0 & 0 & 0 & \cdots & 0 & \textcolor{red}{0} & \textcolor{blue}{1} & 1 & 0 & \cdots & 0 & 0 & \textcolor{red}{0}\\
0 & 0 & 0 & \cdots & 0 & \textcolor{red}{0} & \textcolor{blue}{1} & 1 & 1 & \cdots & 0 & 0 & \textcolor{red}{0}\\
\vdots&\vdots&\vdots&\vdots&\vdots&\vdots&\vdots&\vdots&\vdots&\ddots&\vdots&\vdots&\vdots\\
0 & 0 & 0 & \cdots & 0 & \textcolor{red}{0} & \textcolor{blue}{1} & 1 & 1 & \cdots & 1 & 0 & \textcolor{red}{0}\\
0 & 0 & 0 & \cdots & 0 & \textcolor{red}{0} & \textcolor{blue}{1}& 1 & 1 & \cdots & 1 & 1 & \textcolor{red}{0}\\
\mathbf{0} & \mathbf{0} & \mathbf{0} & \cdots & \mathbf{0} & \mathbf{0} & \mathbf{1} & \mathbf{1} & \mathbf{1} & \cdots & \mathbf{1} & \mathbf{1} & \mathbf{1}\\
1 & 0 & 0 & \cdots & 0 & \textcolor{red}{0} & \textcolor{blue}{1} & 1 & 1 & \cdots & 1 & 1 & \textcolor{blue}{1}\\
 1 & 1 & 0 & \cdots & 0 & \textcolor{red}{0} & \textcolor{blue}{1} & 1 & 1 & \cdots & 1 & 1 & \textcolor{blue}{1}\\
 1 & 1 &  1 & \cdots & 0 & \textcolor{red}{0} & \textcolor{blue}{1} & 1 & 1 & \cdots & 1 & 1 & \textcolor{blue}{1}\\
\vdots&\vdots&\vdots&\ddots&\vdots&\vdots&\vdots&\vdots&\vdots&\vdots&\vdots&\vdots&\vdots\\
 1 & 1 &  1 & \cdots & 1 & \textcolor{red}{0} & \textcolor{blue}{1} & 1 & 1 & \cdots & 1 & 1 & \textcolor{blue}{1}\\
    }; 
    \draw[color=red] (m-2-1.north west) -- (m-2-12.north east) -- (m-7-12.south east) -- (m-7-1.south west) -- (m-2-1.north west);
    \draw[color=red] (m-9-1.north west) -- (m-9-12.north east) -- (m-13-12.south east) -- (m-13-1.south west) -- (m-9-1.north west);
  \end{tikzpicture}\]
Thus, letting $ i=2\ldots n+1 $, we identify the new regions inside each lune $ \mathcal{D}_{i}\setminus\mathcal{D}_{i-1} $ , namely
\[
\left\{
\begin{array}{c}
0\mathbf{111}\cdots\mathbf{1111}\\
00\mathbf{11}\cdots\mathbf{1111}\\
000\mathbf{1}\cdots\mathbf{1111}\\
\vdots\\
0000\cdots{00}\mathbf{11}\\
0000\cdots00{0}\mathbf{1}.
\end{array}
\right.
\]
We point out that in the previous calculation we do not consider the lune $ \mathcal{D}_{n+1}\setminus\mathcal{D}_{n} $ as ``new'', but rather as an ``extension'' of the previously lune $ \mathcal{D}_{1}\setminus\mathcal{D}_{n} $ as formulated by \hyperref[lemma:newlune]{Lemma~\ref*{lemma:newlune}}.
\end{proof}

In order to handle a more explicit encoding formula, we should introduce the following notation.
\begin{notation}
Define the $ n^{\mbox{\scriptsize th}}$ concatenation of a word $ w $ as 
\begin{equation*}
w^n=\underbrace{ww\cdots w}_{\text{$ n $ copies}},
\end{equation*}
with $ w^1=w $ and $ w^0=\varepsilon $. 
\end{notation}

We shall make extensive use of these operations  throughout the rest of this paper. The next corollary now directly follows from \hyperref[Thm:rulesplane]{Theorem~\ref*{Thm:rulesplane}}.
\begin{corollary}\label{Cor:definitionPn}
The partition of the plane divided by $ n $ circles is encoded as
\begin{equation}\label{eq:bitonicdef}
\mathcal{P}_1=\{0,1\},\ \mathcal{P}_2=\{00,01,10,11\},\ \mathcal{P}_n:=\left\{0^n,1^n\right\}\cup\mathcal{P}_n^{\mathtt{01}}\cup\mathcal{P}_n^{\mathtt{10}}\cup\mathcal{P}_n^{\mathtt{00}}\cup\mathcal{P}_n^{\mathtt{11}},\ n\geq 3 
\end{equation}
where
\begin{align*}
\mathcal{P}_n^{\mathtt{00}}&=\left\{0^k1^{n-k}\ | \ 1\leq k\leq n-1\right\};\\
\mathcal{P}_n^{\mathtt{11}}&=\left\{1^k0^{n-k}\ | \ 1\leq k\leq n-1\right\};\\
\mathcal{P}_n^{\mathtt{01}}&=\left\{0^k1^{n-p-k}0^p\ |\ 1\leq p\leq n-2\mbox{ and } 1\leq k\leq n-p-1\right\};\\
\mathcal{P}_n^{\mathtt{10}}&=\left\{1^k0^{n-p-k}1^p\ |\ 1\leq p\leq n-2\mbox{ and } 1\leq k\leq n-p-1\right\}.
\end{align*}
\end{corollary}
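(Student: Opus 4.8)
The plan is to prove the identity by induction on $n$, using the recursive construction supplied by \hyperref[Thm:rulesplane]{Theorem~\ref*{Thm:rulesplane}}. The base cases $\mathcal{P}_1=\{0,1\}$ and $\mathcal{P}_2=\{00,01,10,11\}$ are read directly from \Figs{Fig:123CirclesEncoding}. For the inductive step I would assume that the geometrically defined set $\mathcal{P}_n$ decomposes as in \eqref{eq:bitonicdef}, and then show that the set produced from $\mathcal{P}_n$ by the three rules of \hyperref[Thm:rulesplane]{Theorem~\ref*{Thm:rulesplane}} is exactly the set described by \eqref{eq:bitonicdef} with $n$ replaced by $n+1$. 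The step $n=2\to n=3$ launches the general shape of the formula and should be verified by hand, since $\mathcal{P}_2$ is furnished by the explicit base rather than by the generic decomposition.

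The heart of the argument is a bookkeeping of how each of the five families transforms. The append rule (items~1 and~2) attaches a final letter equal to the first letter of the word, so a word keeps its block pattern and merely extends its last run; concretely $\{0^n,1^n\}$ maps to $\{0^{n+1},1^{n+1}\}$, while $\mathcal{P}_n^{\mathtt{00}}=\{0^k1^{n-k}\}$ becomes $\{0^k1^{n-k}0\}$, i.e.\ exactly the layer $p=1$ of $\mathcal{P}_{n+1}^{\mathtt{01}}$, and $\mathcal{P}_n^{\mathtt{01}}=\{0^k1^{n-p-k}0^p\}$ becomes $\{0^k1^{n-p-k}0^{p+1}\}$, which after the reindexing $p\mapsto p+1$ is the collection of layers $2\le p\le n-1$ of $\mathcal{P}_{n+1}^{\mathtt{01}}$. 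Summing these two contributions reproduces $\mathcal{P}_{n+1}^{\mathtt{01}}$ in full; the families $\mathcal{P}_n^{\mathtt{11}}$ and $\mathcal{P}_n^{\mathtt{10}}$ behave symmetrically and reproduce $\mathcal{P}_{n+1}^{\mathtt{10}}$. Finally, the new regions listed in item~3 of \hyperref[Thm:rulesplane]{Theorem~\ref*{Thm:rulesplane}} are, read top to bottom, precisely $\{0^k1^{(n+1)-k}\ :\ 1\le k\le n\}=\mathcal{P}_{n+1}^{\mathtt{00}}$ and $\{1^k0^{(n+1)-k}\ :\ 1\le k\le n\}=\mathcal{P}_{n+1}^{\mathtt{11}}$, which account for the two remaining families.

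To be fully rigorous I would also check that the five families on the right of \eqref{eq:bitonicdef} are pairwise disjoint, which is immediate from their maximal-run patterns ($0^+1^+$, $1^+0^+$, $0^+1^+0^+$, $1^+0^+1^+$ and the two constant words); this guarantees that the construction is a genuine partition and that nothing is double counted. As a consistency check one computes $|\mathcal{P}_n|=2+2(n-1)+2\binom{n-1}{2}=n^2-n+2$, matching $P(n)$ from \eqref{eq:planedivision} and hence the geometric region count of \hyperref[cor:nregion]{Corollary~\ref*{cor:nregion}}.

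The main obstacle will be the index bookkeeping in the inductive step: one must track how the parameter $p$ in $\mathcal{P}_n^{\mathtt{01}}$ and $\mathcal{P}_n^{\mathtt{10}}$ shifts to $p+1$, how the appended bit merges into the correct run, and that the $p=1$ layer coming from $\mathcal{P}_n^{\mathtt{00}},\mathcal{P}_n^{\mathtt{11}}$ dovetails with the $p\ge 2$ layers coming from $\mathcal{P}_n^{\mathtt{01}},\mathcal{P}_n^{\mathtt{10}}$ with no gaps or overlaps in the ranges of $k$ and $p$. Once the ranges are matched the verification is routine, and the whole statement reduces to the already-established \hyperref[Thm:rulesplane]{Theorem~\ref*{Thm:rulesplane}}.
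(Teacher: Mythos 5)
Your proposal is correct and follows essentially the same route as the paper: induction on $n$ via \hyperref[Thm:rulesplane]{Theorem~\ref*{Thm:rulesplane}}, where the append rules send $\mathcal{P}_n^{\mathtt{00}}0\cup\mathcal{P}_n^{\mathtt{01}}0$ onto $\mathcal{P}_{n+1}^{\mathtt{01}}$ (the $p=1$ layer dovetailing with the shifted $p\mapsto p+1$ layers), symmetrically for $\mathcal{P}_{n+1}^{\mathtt{10}}$, and the new regions of item~3 supply $\mathcal{P}_{n+1}^{\mathtt{00}}\cup\mathcal{P}_{n+1}^{\mathtt{11}}$, exactly as in the paper's proof. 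Your added disjointness and cardinality checks are sound extras the paper defers to a later remark.
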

\begin{proof}
For $ n=1,2 $, we have the corresponding encodings as illustrated in \Figs{Fig:123CirclesEncoding}, namely $ \mathcal{P}_1=\{0,1\} $ and $ \mathcal{P}_2=\{00,01,10,11\} $.
For $ n=3$, we obtain $ \mathcal{P}_3^{\mathtt{00}}=\{011,001\} $, $ \mathcal{P}_3^{\mathtt{11}}=\{110,100\} $, $ \mathcal{P}_3^{\mathtt{01}}=\{010\} $, $ \mathcal{P}_3^{\mathtt{01}}=\{101\} $ and the inner plus the outer region $ \{000,111\} $. These results also match the encodings in \Figs{Fig:123CirclesEncoding}. 

Assume that the formula \eqref{eq:bitonicdef} holds when $ n=\ell>3 $ and let us show that it still holds when $ n=\ell+1 $. By \hyperref[Thm:rulesplane]{Theorem~\ref*{Thm:rulesplane}} and the induction hypothesis, we have 
\begin{equation*}
\mathcal{P}_{\ell+1}=\Big(\mathcal{P}_\ell^{\mathtt{01}}\cup\mathcal{P}_\ell^{\mathtt{00}}\cup\left\{0^\ell\right\}\Big)0\cup\Big(\mathcal{P}_\ell^{\mathtt{10}}\cup\mathcal{P}_\ell^{\mathtt{11}}\cup\left\{1^\ell\right\}\Big)1\cup\mathcal{P}_{\ell+1}^{\mathtt{00}}\cup\mathcal{P}_{\ell+1}^{\mathtt{11}},
\end{equation*}
where
\begin{align*}
\mathcal{P}_\ell^{\mathtt{01}}0&=\left\{0^k1^{\ell-p-k}0^{p+1}\ |\ 1\leq p\leq \ell-2\mbox{ and } 1\leq k\leq \ell-p-1\right\}\\
&=\left\{0^k1^{\ell-{\widehat{p}}+1-k}0^{\widehat{p}}\ |\ 2\leq \widehat{p}\leq \ell-1\mbox{ and } 1\leq k\leq \ell-\widehat{p}\right\}
\end{align*}
and 
\begin{align*}
\mathcal{P}_\ell^{\mathtt{00}}0&=\left\{0^k1^{\ell-k}0\ | \ 1\leq k\leq \ell-1\right\}.
\end{align*}
We deduce that
\begin{align*}
\mathcal{P}_\ell^{\mathtt{00}}0\cup\mathcal{P}_\ell^{\mathtt{01}}0&=\left\{0^k1^{\ell-{\widehat{p}+1}-k}0^{\widehat{p}}\ |\ 1\leq \widehat{p}\leq \ell-1\mbox{ and } 1\leq k\leq \ell-\widehat{p}\right\}\\
&=\mathcal{P}_{\ell+1}^{\mathtt{01}}.
\end{align*}
Correspondingly, we have 
\begin{align*}
\mathcal{P}_\ell^{\mathtt{11}}1\cup\mathcal{P}_\ell^{\mathtt{10}}1&=\left\{1^k0^{\ell-{\widehat{p}+1}-k}1^{\widehat{p}}\ |\ 1\leq \widehat{p}\leq \ell-1\mbox{ and } 1\leq k\leq \ell-\widehat{p}\right\}\\
&=\mathcal{P}_{\ell+1}^{\mathtt{10}}.
\end{align*}
Since $ \left\{0^\ell\right\}0=\left\{0^{\ell+1}\right\} $ and $ \left\{1^\ell\right\}1=\left\{1^{\ell+1}\right\} $, we finally have
\begin{equation*}
\mathcal{P}_{\ell+1}=\mathcal{P}_{\ell+1}^{\mathtt{01}}\cup\mathcal{P}_{\ell+1}^{\mathtt{10}}\cup\mathcal{P}_{\ell+1}^{\mathtt{00}}\cup\mathcal{P}_{\ell+1}^{\mathtt{11}}\cup\left\{0^{\ell+1},1^{\ell+1}\right\}.
\end{equation*}
\end{proof}

\begin{corollary} For $ n\geq 1 $, we recursively define $ \mathcal{P}_{n+1} $ as follows.
\begin{equation}\label{eq:recplane}
\mathcal{P}_{n+1}=\left\{0^{n+1},1^{n+1}\right\}\cup\left(\bigcup\limits_{p=0}^{n-1}\mathcal{P}_{n-p+1}^{\mathtt{00}}0^{p}\right)\cup\left(\bigcup\limits_{p=0}^{n-1}\mathcal{P}_{n-p+1}^{\mathtt{11}}1^{p}\right).
\end{equation}
\end{corollary}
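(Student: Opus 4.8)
The plan is to sidestep all geometry and reduce the claimed recursion \eqref{eq:recplane} to the closed-form description of $\mathcal{P}_{n+1}$ that Corollary~\ref{Cor:definitionPn} already supplies. By \eqref{eq:bitonicdef} it suffices to show that the right-hand side of \eqref{eq:recplane} equals
\[
\{0^{n+1},1^{n+1}\}\cup\mathcal{P}_{n+1}^{\mathtt{00}}\cup\mathcal{P}_{n+1}^{\mathtt{01}}\cup\mathcal{P}_{n+1}^{\mathtt{10}}\cup\mathcal{P}_{n+1}^{\mathtt{11}}.
\]
Since the singleton block $\{0^{n+1},1^{n+1}\}$ occurs verbatim on both sides, the whole statement comes down to the two set identities
\[
\bigcup_{p=0}^{n-1}\mathcal{P}_{n-p+1}^{\mathtt{00}}0^{p}=\mathcal{P}_{n+1}^{\mathtt{00}}\cup\mathcal{P}_{n+1}^{\mathtt{01}},\qquad
\bigcup_{p=0}^{n-1}\mathcal{P}_{n-p+1}^{\mathtt{11}}1^{p}=\mathcal{P}_{n+1}^{\mathtt{11}}\cup\mathcal{P}_{n+1}^{\mathtt{10}}.
\]
The second is obtained from the first by interchanging the letters $0\leftrightarrow1$ throughout, which maps each defining set onto its counterpart, so I would only prove the first identity in detail.

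First I would expand the generic term on the left. Writing $m=n-p+1$ and using $\mathcal{P}_m^{\mathtt{00}}=\{0^k1^{m-k}\mid 1\leq k\leq m-1\}$, appending $0^p$ gives
\[
\mathcal{P}_{n-p+1}^{\mathtt{00}}0^{p}=\bigl\{\,0^k1^{\,n+1-p-k}0^{p}\ \big|\ 1\leq k\leq n-p\,\bigr\},
\]
where I check that every such word has length $k+(n+1-p-k)+p=n+1$, so only genuine length-$(n+1)$ words are produced. Next I would split the union at $p=0$: the term $p=0$ is exactly $\{0^k1^{\,n+1-k}\mid 1\leq k\leq n\}=\mathcal{P}_{n+1}^{\mathtt{00}}$, matching the $\mathtt{00}$-block at level $n+1$. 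For the remaining terms $p=1,\dots,n-1$ I would compare with the definition $\mathcal{P}_{n+1}^{\mathtt{01}}=\{0^k1^{\,(n+1)-q-k}0^{q}\mid 1\leq q\leq n-1,\ 1\leq k\leq n-q\}$ and observe that, under the correspondence $q=p$, the two descriptions agree index-for-index; hence $\bigcup_{p=1}^{n-1}\mathcal{P}_{n-p+1}^{\mathtt{00}}0^{p}=\mathcal{P}_{n+1}^{\mathtt{01}}$. Combining the $p=0$ slice with the $p\geq1$ slices yields the first identity, the symmetric argument yields the second, and reassembling with $\{0^{n+1},1^{n+1}\}$ through \eqref{eq:bitonicdef} finishes the proof.

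The argument is essentially bookkeeping, so the only real care needed is the change of variable $m=n-p+1$ and verifying that the range $1\leq k\leq m-1$ transforms correctly into $1\leq k\leq n-p$, together with the subtle point that the $p=0$ slice must be absorbed into $\mathcal{P}_{n+1}^{\mathtt{00}}$ rather than into $\mathcal{P}_{n+1}^{\mathtt{01}}$, whose trailing-zero parameter $q$ starts at $1$. I would also record the boundary value $n=1$, where both unions collapse to their single term $p=0$, giving $\mathcal{P}_2^{\mathtt{00}}0^0=\{01\}$ and $\mathcal{P}_2^{\mathtt{11}}1^0=\{10\}$, so that $\mathcal{P}_2=\{00,01,10,11\}$ is recovered as expected.
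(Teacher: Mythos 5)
Your proposal is correct and takes essentially the same approach as the paper's proof: both check the base case $n=1$ directly and then, for $n\geq 2$, reduce to the closed form of Corollary~\ref{Cor:definitionPn} by identifying $\bigcup_{p=0}^{n-1}\mathcal{P}_{n-p+1}^{\mathtt{00}}0^{p}$ with $\mathcal{P}_{n+1}^{\mathtt{00}}\cup\mathcal{P}_{n+1}^{\mathtt{01}}$ and, symmetrically, $\bigcup_{p=0}^{n-1}\mathcal{P}_{n-p+1}^{\mathtt{11}}1^{p}$ with $\mathcal{P}_{n+1}^{\mathtt{10}}\cup\mathcal{P}_{n+1}^{\mathtt{11}}$, via the same re-indexing. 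The only cosmetic difference is that you split the union at $p=0$ versus $p\geq 1$, whereas the paper writes each merged pair as a single two-parameter family.
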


\begin{proof}
When $ n=1 $, we have
\begin{align*}
\mathcal{P}_{2}&=\left\{0^{2},1^{2}\right\}\cup\left(\bigcup\limits_{p=0}^{0}\mathcal{P}_{1-p+1}^{\mathtt{00}}0^{p}\right)\cup\left(\bigcup\limits_{p=0}^{1-1}\mathcal{P}_{n-p+1}^{\mathtt{11}}1^{p}\right)\\
&=\{00,11,01,10\}.
\end{align*}
Now let $ n\geq 2 $. By \hyperref[Cor:definitionPn]{Corollary~\ref*{Cor:definitionPn}}, we may combine the pairs $ \mathcal{P}_{n+1}^{\mathtt{00}}$, $\mathcal{P}_{n+1}^{\mathtt{01}} $ and $ \mathcal{P}_{n+1}^{\mathtt{10}}$, $\mathcal{P}_{n+1}^{\mathtt{11}} $ as
\begin{align*}
\mathcal{P}_{n+1}^{\mathtt{00}}\cup\mathcal{P}_{n+1}^{\mathtt{01}}&= \left\{0^k1^{n-p-k+1}0^p\ |\ 0\leq p\leq n-1\mbox{ and } 1\leq k\leq n-p\right\}\\
&=\bigcup\limits_{p=0}^{n-1}\mathcal{P}_{n-p+1}^{\mathtt{00}}0^{p}
\end{align*}
and
\begin{align*}
\mathcal{P}_{n+1}^{\mathtt{10}}\cup\mathcal{P}_{n+1}^{\mathtt{11}}&= \left\{1^k0^{n-p-k+1}1^p\ |\ 0\leq p\leq n-1\mbox{ and } 1\leq k\leq n-p\right\}\\
&=\bigcup\limits_{p=0}^{n-1}\mathcal{P}_{n-p+1}^{\mathtt{11}}1^{p}.
\end{align*}
We conclude by writing
\begin{equation*}
\mathcal{P}_{n+1}=\left\{0^{n+1},1^{n+1}\right\}\cup\left(\mathcal{P}_{n+1}^{\mathtt{00}}\cup\mathcal{P}_{n+1}^{\mathtt{01}}\right)\cup\left(\mathcal{P}_{n+1}^{\mathtt{10}}\cup\mathcal{P}_{n+1}^{\mathtt{11}}\right).
\end{equation*}
\end{proof}

\begin{remark}[Lang \cite{Lang}]
A sequence $ a = a_0,a_{1},\ldots, a_{n-1}$ with $ a_{i}\in \{0, 1\}$, $i=0,\ldots,n-1$ is called a \textit{$ 0 $-$ 1 $-sequence}. A $ 0 $-$ 1 $-sequence is called \textit{bitonic} \cite{Batcher}, if it contains at most two changes between $ 0 $ and $ 1 $, i.e., if there exist subsequence lengths $ k,m\in\{1,\ldots,n\}$ such that
\[\begin{array}{lll}
a_0,\ldots, a_{k-1} = 0,& a_k, \ldots, a_{m-1} = 1 ,&  a_m, \ldots, a_{n-1} = 0\  \textit{or}\\
a_0, \ldots, a_{k-1} = 1,& a_k, \ldots, a_{m-1} = 0 ,&  a_m,\ldots, a_{n-1} = 1.
\end{array}\]

It follows that the set $ \mathcal{P}_{n+1} $ as defined in formulas \eqref{eq:bitonicdef} and \eqref{eq:recplane} is exactly the set of binary bitonic sequences of length $n+1 $. The cardinal of such set is known in the OEIS as $ \seqnum{A014206}(n)=n^2+n+2 $. Indeed, since $ \left\{0^\ell,1^n\right\}\cap\mathcal{P}_\ell^{\mathtt{01}}\cap\mathcal{P}_\ell^{\mathtt{10}}\cap\mathcal{P}_\ell^{\mathtt{00}}\cap\mathcal{P}_\ell^{\mathtt{11}}=\varnothing $ for any nonnegative number $ \ell $, then from formula \eqref{eq:recplane} we verify that
\begin{align*}
\#\mathcal{P}_{n+1}&=\#\left\{0^{n+1},1^{n+1}\right\}+\sum_{p=0}^{n-1}\#\mathcal{P}_{n-p+1}^{\mathtt{00}}0^{p}+\sum_{p=0}^{n-1}\#\mathcal{P}_{n-p+1}^{\mathtt{11}}1^{p}\\
&=2+2\sum_{p=0}^{n-1}(n-p)\\
&=n^2+n+2,\ n\geq 1.
\end{align*}

This formula is actually still valid for $ n=0 $ since $ \#\mathcal{P}_{1}=2 $. The value $ \#\mathcal{P}_{n+1} $ describes the maximal number of regions into which the plane is divided by $ n+1 $ circles in general arrangement. But recall on the other hand that it is also the number of $ 2 $-states that is obtained by splitting the crossings of the $ n $-twist knot. The following section is motivated with this enumeration.
\end{remark}
\section{Encoding the states of the twist knot}\label{Sec:StatesEncoding}

In the present section, we associate as well a state of a $ n $-crossing knot with a length $ n $ binary word. If we initially fix the order of splits, then the word $ \omega_n=\sigma_{1}\sigma_{2}\cdots\sigma_{n} $ relates the split we have applied at each crossing. The bit $ \sigma_{i}=0 $ corresponds to an $ A $-split at the $ i $-th crossing, while $ \sigma_{i}=1 $ corresponds to a $ B $-split. As an illustration, we consider the states of the trefoil $ \uptau_1 $ ($ 3 $ crossings) in \Figs{Fig:TrefoilStates}. 

We impose from here that the first two splits are always those of the link part for the twist knot. The order of these splits does not matter since the results are $ 00 $, $ 01 $, $ 10 $ and $ 11 $. Besides, we always proceed by splitting the leftmost crossing for the twist loop, and for the foil knot, the split would start with an arbitrary crossing and followed with the first-nearest clockwise one.
\begin{figure}
\centering
\includegraphics[width=.8\linewidth]{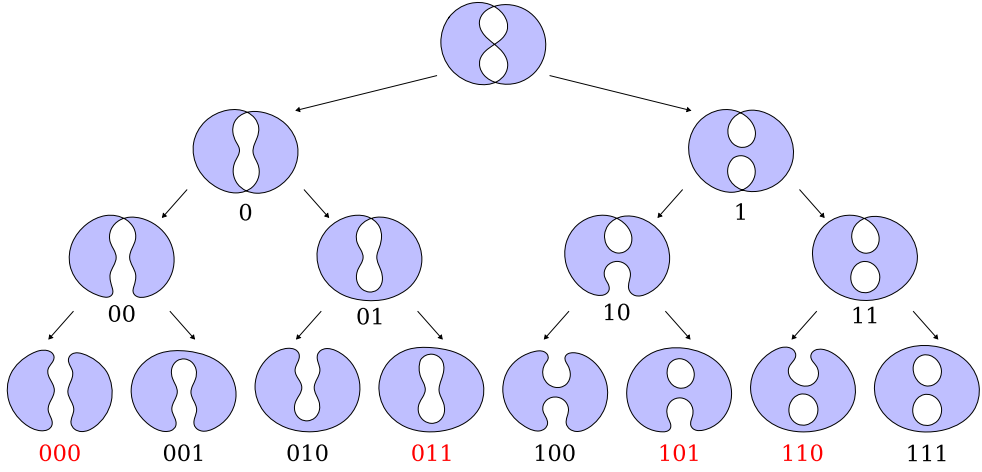}
\caption{The states of the trefoil knot and the associated encodings.}
\label{Fig:TrefoilStates}
\end{figure}

Let $ \mathtt{T}_{n} $ denote the set of encodings of the $ n $-twist knot. Thus, $ \mathtt{T}_{n} $ actually consists of a set of words of length $ n+2 $. It would cause no confusion if we take into account that the $ n $-twist knot is, recall it, associated with the number of its half-twists (here $ n $). For instance, the corresponding set associated with the trefoil $ \uptau_1 $ in \Figs{Fig:TrefoilStates} is given by  \[\mathtt{T}_{1}=\{000,001,010,011,100,101,110,111\}.\]
 
Let on the other hand $ \mathsf{F}_{n} $ and $ \mathsf{T}_{n} $ respectively denote the set of encodings of the $ n $-foil and the $ n $-twist loop. The split of the link part suggests the following formula
\begin{equation*}
\mathtt{T}_{n}:= 00\mathsf{T}_{n}\cup 01\mathsf{F}_{n}\cup 10\mathsf{F}_{n}\cup 11\mathsf{F}_{n},\ n\geq0
\end{equation*}
with obviously
\begin{equation*}
01\mathsf{F}_{n}\cap 10\mathsf{F}_{n}\cap 00\mathsf{T}_{n}\cap 11\mathsf{T}_{n}=\varnothing.
\end{equation*}
Referring back to \Figs{Fig:StateOfLinkPart}, it is immediate to notice that for $ n= 0$, we have
\begin{equation*}
\mathtt{T}_{0}:= \{00,01,10,11\},
\end{equation*}
so that $ \mathsf{T}_{0}=\mathsf{F}_{0}=\{\varepsilon\} $. 

\begin{remark}\label{rem:emptyword}
 If we let $ \mathtt{T}_{n,k} $, $ \mathsf{F}_{n,k} $ and $ \mathsf{T}_{n,k} $ respectively denote the subset of $ \mathtt{T}_{n} $, $ \mathsf{F}_{n} $ and $ \mathsf{T}_{n}$ which represent the encodings of a $ k $-state, then we have
\begin{equation}\label{eq:statesnk}
\mathtt{T}_{n,k}:= 00\mathsf{T}_{n,k}\cup 01\mathsf{F}_{n,k}\cup 10\mathsf{F}_{n,k}\cup 11\mathsf{F}_{n,k-1},\ n\geq0.
\end{equation}
The last factor in the right side is expressed with the $ 1 $-state because it coincides with the states of the disjoint union of a resulting unknot -- now counted as one component -- and the $ n $-foil. 
Moreover
\begin{equation}\label{eq:capnothin}
\bigcap\limits_{i>0}^{}\mathsf{T}_{n,i}=\bigcap\limits_{j>0}^{}\mathsf{F}_{n,j}=\bigcap\limits_{k>0}^{}\mathtt{T}_{n,k}=\varnothing,
\end{equation}
since the states are obtained form a full binary tree. The index $ i $, $ j $ and $ k $ in \eqref{eq:capnothin} are respectively supposed to follow the range defined in \eqref{eq:tnk}, \eqref{eq:fnk} and \eqref{eq:taunk}. Consequently, depending on the context, we refer to the empty word when we denote a state of a specified number of components. For instance, we write $ \mathsf{F}_{0,1}=\varnothing $ whereas $ \mathsf{F}_{0,2}=\{\varepsilon\} $ because the knot $ F_0 $ consists of two components.
\end{remark}

Since we are particularly interested in the class of the $ 2 $-state, then by \Figs{Fig:0TwsitKnot} and formula \eqref{eq:statesnk} we write
\begin{equation*}
\mathtt{T}_{0,2}:=\{01,10\},\ \mathtt{T}_{n,2}:= 00\mathsf{T}_{n,2}\cup 01\mathsf{F}_{n,2}\cup 10\mathsf{F}_{n,2}\cup 11\mathsf{F}_{n,1},\ n\geq1
\end{equation*}
Here we have necessarily $\mathsf{T}_{0,2}=\mathsf{F}_{0,1}=\varnothing $ as mentioned in \hyperref[rem:emptyword]{Remark~\ref*{rem:emptyword}}. 

\begin{lemma}\label{Lem:F1T2}
 For $ n\geq1 $, we have
\begin{equation*} 
\mathsf{F}_{n,1}=\mathsf{T}_{n,2}.
\end{equation*}
\end{lemma}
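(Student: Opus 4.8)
The plan is to prove the identity by a short induction on $n$, after first recording the two set-level recursions that the splitting operations impose on the families $\mathsf{T}_{n,k}$ and $\mathsf{F}_{n,k}$. From \Figs{Fig:StateOfCrossingTwistLoop} I would read off the twist-loop recursion: splitting the leftmost crossing by the $A$-split (the bit $0$) pinches off an unknot and leaves an $(n-1)$-twist loop, hence raises the component count by one, while the $B$-split (the bit $1$) merely removes one half-twist and leaves an $(n-1)$-twist loop with the same number of components. This gives
\[
\mathsf{T}_{n,k}=0\,\mathsf{T}_{n-1,k-1}\cup 1\,\mathsf{T}_{n-1,k},\qquad n\geq 1 .
\]
Likewise, from \Figs{Fig:StateOfCrossingFoil} the foil recursion reads: the $A$-split ($0$) turns the $n$-foil into an $(n-1)$-twist loop and the $B$-split ($1$) into an $(n-1)$-foil, with neither operation changing the number of components, so
\[
\mathsf{F}_{n,k}=0\,\mathsf{T}_{n-1,k}\cup 1\,\mathsf{F}_{n-1,k},\qquad n\geq 1 .
\]
These are exactly the set refinements of the coefficient recurrences \eqref{eq:tnk} and \eqref{eq:fnk}, the disjointness of the two branches being automatic since they carry distinct leading bits.

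With these in hand the induction is immediate. For the base case $n=1$, the values $\mathsf{T}_{0,1}=\mathsf{F}_{0,2}=\{\varepsilon\}$ and $\mathsf{T}_{0,2}=\mathsf{F}_{0,1}=\varnothing$ recorded in \hyperref[rem:emptyword]{Remark~\ref*{rem:emptyword}} collapse both recursions to $\mathsf{T}_{1,2}=0\{\varepsilon\}=\{0\}$ and $\mathsf{F}_{1,1}=0\{\varepsilon\}=\{0\}$, so the two sets agree. For the inductive step I assume $\mathsf{F}_{n-1,1}=\mathsf{T}_{n-1,2}$, then instantiate the foil recursion at $k=1$ and the twist-loop recursion at $k=2$ and substitute the hypothesis in the second summand, obtaining
\[
\mathsf{F}_{n,1}=0\,\mathsf{T}_{n-1,1}\cup 1\,\mathsf{F}_{n-1,1}=0\,\mathsf{T}_{n-1,1}\cup 1\,\mathsf{T}_{n-1,2}=\mathsf{T}_{n,2},
\]
which closes the induction.

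The one genuinely delicate point, and the step I would scrutinize most, is the alignment of the prefix bits across the two expansions. The telescoping works precisely because the summand $0\,\mathsf{T}_{n-1,1}$ appears \emph{identically} in both: in $\mathsf{T}_{n,2}$ it is the ``pinch-off-an-unknot'' branch and in $\mathsf{F}_{n,1}$ it is the ``become-a-twist-loop'' branch, yet both are produced by the $A$-split and therefore share the leading bit $0$ together with the same twist-loop tail $\mathsf{T}_{n-1,1}$. Were the figures to label the splits the other way (so that, say, the component-raising twist-loop smoothing were the $B$-split), the two word sets $\mathsf{F}_{n,1}$ and $\mathsf{T}_{n,2}$ would still have the common cardinality $n$ but would differ as sets and the clean cancellation would fail. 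Hence the crux is to confirm from \Figs{Fig:StateOfCrossingTwistLoop} and \Figs{Fig:StateOfCrossingFoil} that the $A$-split is the operation yielding the twist-loop factor in both constructions; everything past that is a formal manipulation of prefixed word sets.
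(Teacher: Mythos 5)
Your proof is correct, but it takes a genuinely different route from the paper's. The paper's own proof is direct: it notes from \eqref{eq:tnk} and \eqref{eq:fnk} that $\#\mathsf{F}_{n,1}=\#\mathsf{T}_{n,2}=\binom{n}{1}$, and then observes that for both knots such a state is produced precisely by choosing one crossing at which to perform an $A$-split and $B$-splitting the remaining $n-1$ crossings, so both sets equal the set of length-$n$ words containing exactly one $0$, namely $\left\{1^k01^{n-k-1}\mid 0\leq k\leq n-1\right\}$. You instead refine the coefficient recurrences into set-level recursions $\mathsf{T}_{n,k}=0\,\mathsf{T}_{n-1,k-1}\cup 1\,\mathsf{T}_{n-1,k}$ and $\mathsf{F}_{n,k}=0\,\mathsf{T}_{n-1,k}\cup 1\,\mathsf{F}_{n-1,k}$ and induct on $n$; these recursions are valid under the paper's split conventions and are in fact the same decompositions the paper only writes down later, in the propositions giving $\mathcal{T}_n$ and $\mathcal{F}_n$ (formulas \eqref{eq:Tn} and \eqref{eq:recfoil}), so your argument essentially proves the lemma ahead of those results rather than alongside them. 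What your route buys is self-containment and rigor: you never need the global combinatorial claim ``exactly one $A$-split, all other splits $B$,'' only the local effect of a single split as read from \Figs{Fig:StateOfCrossingTwistLoop} and \Figs{Fig:StateOfCrossingFoil}, and the prefix-alignment point you scrutinize (that the $A$-split is the one yielding the twist-loop descendant in both constructions) is exactly the geometric fact the paper's one-sentence argument uses tacitly. What the paper's route buys is brevity and an explicit description of the common set, which your induction establishes only implicitly.
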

\begin{proof}
First of all, notice that they have the same cardinality. Referring back to formulas \eqref{eq:tnk} and \eqref{eq:fnk}, we have $\# \mathsf{F}_{n,1}=\#\mathsf{T}_{n,2} =\binom{n}{1}$. For both knots, we have to choose one crossing at which we perform an $ A $-split. We then have perform a $ B $-split at the remaining crossings. So the encodings must be the same.
\end{proof}

Henceforth, let us adopt the following notation: $ \mathtt{T}_{n,2}=\mathfrak{T}_{n} $, $ \mathsf{T}_{n,2}=\mathcal{T}_n $ and $ \mathsf{F}_{n,2}=\mathcal{F}_n $. Taking into consideration \hyperref[Lem:F1T2]{Lemma~\ref*{Lem:F1T2}}, we now write
\begin{equation}
\mathfrak{T}_0:=\{01,10\},\ \mathfrak{T}_n:= 01\mathcal{F}_n\cup 10\mathcal{F}_n\cup 00\mathcal{T}_n\cup 11\mathcal{T}_n,\ n\geq1.
\end{equation}

\begin{proposition}
The set of $ 2 $-states of the $ n $-twist loop is
\begin{equation}\label{eq:Tn}
\mathcal{T}_0:=\varnothing,\ \mathcal{T}_1:=\{0\},\ \mathcal{T}_n:=\left\{1^k01^{n-k-1}\mid 0\leq k\leq n-1\right\},\ n\geq2.
\end{equation}
\end{proposition}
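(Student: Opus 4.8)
The plan is to read a set-valued recurrence straight off the split of the leftmost crossing and then iterate it. Splitting the leftmost crossing of $T_n$ produces exactly the two diagrams in \Figs{Fig:StateOfCrossingTwistLoop}: the $A$-split (encoded by the bit $0$) yields the disjoint union of an unknot and a $(n-1)$-twist loop, so it turns a $(k-1)$-state of $T_{n-1}$ into a $k$-state of $T_n$; the $B$-split (encoded by the bit $1$) yields a plain $(n-1)$-twist loop and preserves the number of components. Prefixing the encoding of the remaining $n-1$ crossings with the bit that records this first split gives
\begin{equation*}
\mathsf{T}_{n,k}=0\,\mathsf{T}_{n-1,k-1}\cup 1\,\mathsf{T}_{n-1,k},\ n\geq 1,
\end{equation*}
with the base case $\mathsf{T}_{0,1}=\{\varepsilon\}$ and $\mathsf{T}_{0,k}=\varnothing$ for $k\neq 1$ coming from the fact that $T_0$ is a single unknot.

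First I would extract the $1$-states. Since $\mathsf{T}_{n-1,0}=\varnothing$, the recurrence collapses to $\mathsf{T}_{n,1}=1\,\mathsf{T}_{n-1,1}$, whence $\mathsf{T}_{n,1}=\{1^n\}$ for $n\geq 0$, which agrees with $t_{n,1}=\binom{n}{0}=1$. Specializing the recurrence to $k=2$ and substituting this value then gives the clean one-step recurrence
\begin{equation*}
\mathcal{T}_n=\mathsf{T}_{n,2}=0\,\mathsf{T}_{n-1,1}\cup 1\,\mathsf{T}_{n-1,2}=\{0\,1^{n-1}\}\cup 1\,\mathcal{T}_{n-1},\ n\geq 2,
\end{equation*}
together with $\mathcal{T}_1=0\,\mathsf{T}_{0,1}\cup 1\,\mathsf{T}_{0,2}=\{0\}$ and $\mathcal{T}_0=\varnothing$ (recall from \hyperref[rem:emptyword]{Remark~\ref*{rem:emptyword}} that $\mathsf{T}_{0,2}=\varnothing$).

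Finally I would close the induction. Assuming $\mathcal{T}_{n-1}=\{1^k01^{n-2-k}\mid 0\leq k\leq n-2\}$, prepending a $1$ gives, after reindexing, $1\,\mathcal{T}_{n-1}=\{1^{k}01^{n-1-k}\mid 1\leq k\leq n-1\}$, and adjoining the word $01^{n-1}$ (the $k=0$ case) produces exactly $\{1^k01^{n-k-1}\mid 0\leq k\leq n-1\}=\mathcal{T}_n$. The only genuinely nontrivial point is the first one: correctly matching the bit $0$ to the component-raising outcome of the leftmost split. Once that identification is fixed by the figure, the claim reduces to bookkeeping, and the words obtained are precisely the length-$n$ binary words carrying a single $0$, consistent with $\#\mathcal{T}_n=t_{n,2}=\binom{n}{1}=n$.
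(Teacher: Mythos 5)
Your proposal is correct and takes essentially the same route as the paper: both split the leftmost crossing, arrive at the recurrence $\mathcal{T}_n=\{01^{n-1}\}\cup 1\,\mathcal{T}_{n-1}$, and close with the same reindexing induction. The only difference is cosmetic — you justify that the $A$-split branch contributes exactly the word $01^{n-1}$ by first deriving the general $k$-state recurrence $\mathsf{T}_{n,k}=0\,\mathsf{T}_{n-1,k-1}\cup 1\,\mathsf{T}_{n-1,k}$ and computing $\mathsf{T}_{n,1}=\{1^n\}$, whereas the paper simply asserts that all remaining crossings must receive $B$-splits.
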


\begin{proof}
When $ n=1 $, there is only one possibility to obtain a $ 2 $-state for the knot $ T_1 $, namely an $ A $-split. Thus $\mathcal{T}_1:=\{0\} $. When $ n=2 $, we have $\mathcal{T}_2:=\{01,10\} $ (see \Figs{Fig:12TwsitLoop}).

\begin{figure}
\centering
\includegraphics[width=.9\linewidth]{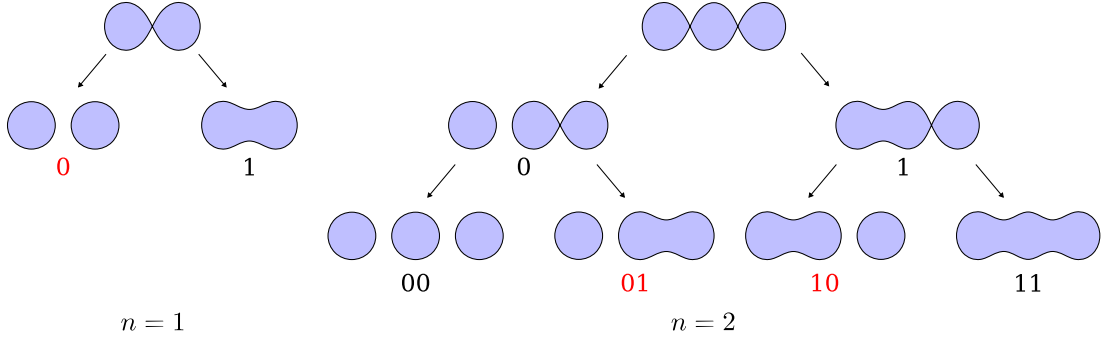}
\caption{The states of the $ n $-twist loop, $ n=1,2 $.}
\label{Fig:12TwsitLoop}
\end{figure}

Assume that formula \eqref{eq:Tn} holds when $ n=\ell >2$ and let us show that it still holds when $ n=\ell+1 $. When we split the leftmost crossing of the $ (\ell+1) $-twist loop, we obtain either a disjoint union of the unknot and a $ \ell $-twist loop, or simply a $ \ell$-twist loop (see \Figs{Fig:StateOfCrossingTwistLoop}). The first case corresponds to an $ A $-split, and in order to end up to two components, we must apply a $ B $-split a each of the crossings of the $ \ell$-twist loop part. Therefore we have
\begin{equation*}
\mathcal{T}_{\ell+1}=0\left\{1^\ell\right\}\cup1\mathcal{T}_{\ell}.
\end{equation*}
From the induction hypothesis we write
\begin{align*}
\mathcal{T}_{\ell+1}&=\left\{01^\ell\right\}\cup\left\{11^k01^{\ell-k-1}\mid 0\leq k\leq \ell-1\right\}\\
&=\left\{01^\ell\right\}\cup\left\{1^{\widehat{k}}01^{\ell-\widehat{k}}\mid 1\leq \widehat{k}\leq \ell\right\}\\
&=\left\{1^{\widehat{k}}01^{\ell-\widehat{k}}\mid 0\leq \widehat{k}\leq \ell\right\}.
\end{align*}.
\end{proof}

\begin{proposition}
The encodings of $ 2 $-states associated with the $ n $-foil is recursively given by 
\begin{equation}\label{eq:recfoil}
\mathcal{F}_0=\left\{\varepsilon\right\},\ \mathcal{F}_1=\left\{1\right\},\ \mathcal{F}_{n}=0\mathcal{T}_{n-1}\cup1\mathcal{F}_{n-1},\ n\geq2.
\end{equation} 
Moreover, for $ n\geq 2 $, we define the set $ \mathcal{F}_{n} $ by 
\begin{equation}\label{eq:foilset}
\mathcal{F}_{n}:=\left\{1^p01^k01^{n-p-k-2}\mid 0\leq p\leq n-2\ and \ 0\leq k\leq n-p-2\right\}\cup \left\{1^{n}\right\}.
\end{equation}
\end{proposition}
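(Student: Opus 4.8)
The plan is to prove the recursive description \eqref{eq:recfoil} first and then derive the closed form \eqref{eq:foilset} from it by induction. For the recursion I would read the split of the distinguished crossing of the $n$-foil directly off \Figs{Fig:StateOfCrossingFoil}, exactly as in the derivation of \eqref{eq:Fnx}: the $A$-split (recorded by the letter $0$) turns the $n$-foil into an $(n-1)$-twist loop, while the $B$-split (recorded by the letter $1$) turns it into an $(n-1)$-foil. Prepending the recorded letter to the encoding of the reduced diagram, and demanding that the reduced diagram itself be a $2$-state, yields $\mathcal{F}_n=0\mathcal{T}_{n-1}\cup 1\mathcal{F}_{n-1}$. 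The two base cases follow by inspection: $\mathcal{F}_0=\{\varepsilon\}$ because $F_0$ already has two components and carries no crossing, and $\mathcal{F}_1=\{1\}$ because the single crossing must be $B$-split, an $A$-split collapsing $F_1$ to the unknot $T_0$, which is a $1$-state. I would also point out that the recursion itself already reproduces $\mathcal{F}_1$ when it is fed $\mathcal{T}_0=\varnothing$ and $\mathcal{F}_0=\{\varepsilon\}$, so the split behaves uniformly from $n=1$ onward.

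For the closed form \eqref{eq:foilset} I would argue by induction on $n$, taking as base $n=2$ the direct check $\mathcal{F}_2=0\{0\}\cup 1\{1\}=\{00,11\}$, which agrees with \eqref{eq:foilset}. For the inductive step ($n\geq 3$) I would substitute the explicit shape of $\mathcal{T}_{n-1}=\{1^k01^{n-2-k}\mid 0\leq k\leq n-2\}$ from \eqref{eq:Tn} and the inductive hypothesis for $\mathcal{F}_{n-1}$ into the recursion. Prepending $0$ to $\mathcal{T}_{n-1}$ produces exactly the words $1^0 0 1^k 0 1^{n-k-2}$, that is, the slice $p=0$ of \eqref{eq:foilset}. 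Prepending $1$ to $\mathcal{F}_{n-1}$ sends $1^{p}01^{k}01^{(n-1)-p-k-2}$ to $1^{p+1}01^{k}01^{n-(p+1)-k-2}$ and sends the terminal word $1^{n-1}$ to $1^{n}$; after the reindexing $\widehat p=p+1$ these are precisely the slices $1\leq \widehat p\leq n-2$ of \eqref{eq:foilset} together with the word $1^{n}$. Taking the union reassembles \eqref{eq:foilset} verbatim.

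I expect the only delicate point to be the index bookkeeping in the inductive step: one must track how the exponent ranges $0\leq p\leq n-2$ and $0\leq k\leq n-p-2$ shift under prepending a letter, and must verify that the terminal word $1^{n-1}\in\mathcal{F}_{n-1}$ is exactly what produces the separate term $1^{n}$ in \eqref{eq:foilset}. There is no deeper obstacle. Because the two pieces $0\mathcal{T}_{n-1}$ and $1\mathcal{F}_{n-1}$ begin with different letters they are automatically disjoint, so no word is counted twice and the set equality is clean; as a consistency check I would finally confirm $\#\mathcal{F}_n=\binom{n}{2}+1=f_{n,2}$, matching the coefficient of $x^2$ in \eqref{eq:genepolyFn}.
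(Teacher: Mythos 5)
Your proposal is correct and follows essentially the same route as the paper: the recursion $\mathcal{F}_n=0\mathcal{T}_{n-1}\cup 1\mathcal{F}_{n-1}$ is obtained from the $A$-split/$B$-split decomposition underlying \eqref{eq:Fnx}, and the closed form \eqref{eq:foilset} is then proved by induction with base $n=2$ and the reindexing $\widehat{p}=p+1$, exactly as in the paper's argument. Your added remarks (disjointness of the two pieces, the cardinality check against $f_{n,2}$, and the recursion reproducing $\mathcal{F}_1$ from $\mathcal{T}_0=\varnothing$, $\mathcal{F}_0=\{\varepsilon\}$) are sound but not different in substance.
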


\begin{proof}
When $ n=1 $, an $ A $-split produces one $ 1 $-state whereas a $ B $-split produces one $ 2 $-state. Accordingly, we have $\mathcal{F}_1=\left\{1\right\} $ (see \Figs{fig:hopfstates}).

\begin{figure}
\centering
\includegraphics[width=.7\linewidth]{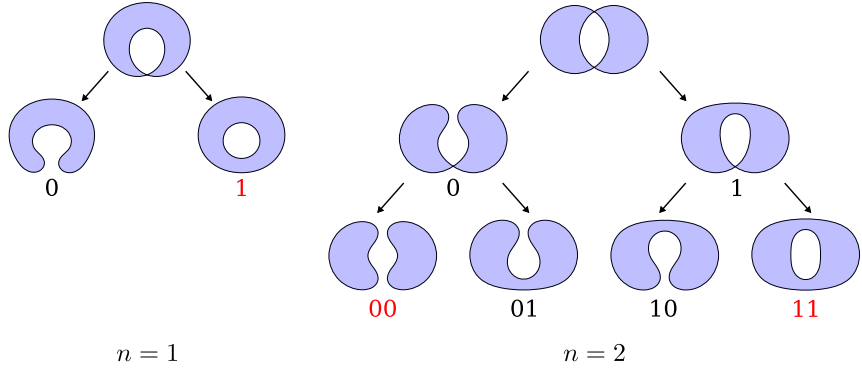}
\caption{The states of the $ n $-foil, $ n=1,2 $.}
\label{fig:hopfstates}
\end{figure}

When $ n\geq 2 $, recall that the generating polynomial of the $ n $-foil is given by \[F_n(x)=T_{n-1}(x)+F_{n-1}(x).\]
The first term in the right side is associated with an $ A $-split and the second term with a $ B $-split. Expressing such polynomial in terms of the class of the $ 2 $-state, we have
\begin{equation*}
\mathcal{F}_{n}=0\mathcal{T}_{n-1}\cup1\mathcal{F}_{n-1}.
\end{equation*}

Let us now use induction to prove that formula \eqref{eq:foilset} holds. When $ n=2 $, the boundaries $ 0\leq p\leq 2-2 $ and $ 0\leq k\leq 2-p-2 $ imply \[\mathcal{F}_2=\{00,11\}.\] 

By \Figs{fig:hopfstates}, we have $ \mathsf{F}_3=\{00,01,10,11\} $ where the subset $ \{00,11\} $ corresponds to two consecutive $ A $-splits and two consecutive $ B $-splits.

Let $ \ell\in\mathbb{N} $ be given, and assume that the formula \eqref{eq:foilset} holds for $ n = \ell $. From the recurrence \eqref{eq:recfoil}, we write
\begin{equation*}
\mathcal{F}_{\ell+1}=1\mathcal{F}_{\ell}\cup0\mathcal{T}_\ell.
\end{equation*}
By the induction hypothesis, we have
\begin{equation}\label{eq:OT}
0\mathcal{T}_\ell:=\left\{01^k01^{\ell-k-1}\mid0\leq k\leq \ell-1\right\}
\end{equation}
and 
\begin{align}
1\mathcal{F}_{\ell}&:=\left\{1^{p+1}01^k01^{\ell-p-k-2}\mid0\leq p\leq \ell-2\ and\ \leq k\leq\ell-p-2\right\}\cup \left\{11^{\ell}\right\}\nonumber\\
&=\left\{1^{\widehat{p}}01^k01^{\ell-\widehat{p}-k-1}\mid 1\leq \widehat{p}\leq \ell-1\ and\ 0\leq k\leq\ell-\widehat{p}-1\right\}\cup \left\{1^{\ell+1}\right\}.\label{eq:1F}
\end{align}
Combining \eqref{eq:OT} and \eqref{eq:1F} we get
\begin{equation*}
\mathcal{F}_{\ell+1}=\left\{1^{\widehat{p}}01^k01^{\ell-\widehat{p}-k-1}\mid0\leq \widehat{p}\leq \ell-1\ and\ 0\leq k\leq\ell-\widehat{p}-1\right\}\cup \left\{1^{\ell+1}\right\}.
\end{equation*}
\end{proof}

\begin{lemma}\label{Lem:lemmtau}
When $ n\geq 1 $, we have
\begin{equation*}
\mathfrak{T}_{n+1}=\big(011\mathcal{F}_n\cup 101 \mathcal{F}_n \cup 010\mathcal{T}_n\cup 100\mathcal{T}_n\big)\cup 00\mathcal{T}_{n+1}\cup 11\mathcal{T}_{n+1}.
\end{equation*}
\end{lemma}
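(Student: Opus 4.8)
The plan is to derive the identity purely algebraically from the two recursive descriptions already in hand, with no new combinatorial input. The two ingredients are the defining expansion
\[
\mathfrak{T}_{n+1} = 01\mathcal{F}_{n+1} \cup 10\mathcal{F}_{n+1} \cup 00\mathcal{T}_{n+1} \cup 11\mathcal{T}_{n+1},
\]
obtained by substituting $n+1$ for $n$ in the definition of $\mathfrak{T}_n$ (legitimate for every $n\geq 1$, since then $n+1\geq 1$), together with the foil recurrence $\mathcal{F}_{n+1} = 0\mathcal{T}_n \cup 1\mathcal{F}_n$ from \eqref{eq:recfoil}, which applies because $n\geq 1$ forces $n+1\geq 2$. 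The lemma is essentially a bookkeeping step that re-expresses $\mathfrak{T}_{n+1}$ so that the two link-part prefixes $00$ and $11$ attached to $\mathcal{T}_{n+1}$ are separated from the four length-three prefixes attached to the level-$n$ sets $\mathcal{F}_n$ and $\mathcal{T}_n$.

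First I would substitute the recurrence $\mathcal{F}_{n+1} = 0\mathcal{T}_n \cup 1\mathcal{F}_n$ into the two terms of the expansion that involve $\mathcal{F}_{n+1}$. Using that left-concatenation by a fixed word distributes over a union of languages, this yields
\[
01\mathcal{F}_{n+1} = 010\mathcal{T}_n \cup 011\mathcal{F}_n
\]
and
\[
10\mathcal{F}_{n+1} = 100\mathcal{T}_n \cup 101\mathcal{F}_n,
\]
while the remaining two terms $00\mathcal{T}_{n+1}$ and $11\mathcal{T}_{n+1}$ are left untouched.

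Finally I would collect the six resulting sets and merely reorder the union to match the right-hand side of the statement, grouping $011\mathcal{F}_n$, $101\mathcal{F}_n$, $010\mathcal{T}_n$, $100\mathcal{T}_n$ inside the parentheses and keeping $00\mathcal{T}_{n+1} \cup 11\mathcal{T}_{n+1}$ outside. Since union is commutative and associative, this rearrangement is immediate. There is no genuine obstacle here: the only points deserving a word of care are verifying the index ranges (that $n\geq 1$ simultaneously justifies the expansion of $\mathfrak{T}_{n+1}$ and places $\mathcal{F}_{n+1}$ in the $n\geq 2$ branch of the $\mathcal{F}$-recurrence) and observing that distributivity of concatenation over $\cup$ is precisely what licenses the two displayed splittings.
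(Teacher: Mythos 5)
Your proposal is correct and follows essentially the same route as the paper: expand $\mathfrak{T}_{n+1}$ by its definition, substitute the foil recurrence $\mathcal{F}_{n+1}=0\mathcal{T}_n\cup 1\mathcal{F}_n$, and distribute concatenation over the union. Your extra care about the index ranges ($n\geq 1$ placing $\mathcal{F}_{n+1}$ in the $n\geq 2$ branch of the recurrence) is implicit in the paper but worth stating.
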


\begin{proof}
Taking into consideration formula \eqref{eq:recfoil}, we have
\begin{align*}
\mathfrak{T}_{n+1}&=01\mathcal{F}_{n+1}\cup 10\mathcal{F}_{n+1}\cup 00\mathcal{T}_{n+1}\cup 11\mathcal{T}_{n+1}\\
&=
01\big(0\mathcal{T}_n\cup 1\mathcal{F}_n\big)\cup
10\big(0\mathcal{T}_n\cup 1\mathcal{F}_n\big)\cup 
00\mathcal{T}_{n+1}\cup
11\mathcal{T}_{n+1}\\
&=\big(011\mathcal{F}_n\cup 101 \mathcal{F}_n \cup 010\mathcal{T}_n\cup 100\mathcal{T}_n\big)\cup 00\mathcal{T}_{n+1}\cup 11\mathcal{T}_{n+1}.
\end{align*}
\end{proof}

\begin{corollary}
Let $ \omega \in \mathfrak{T}_n $ such that $ \omega $ is a compound of a prefix $ \alpha\in\{00,01,10,11\} $ and a factor $ \beta $ from either $\mathcal{F}_{n}$ or $\mathcal{T}_{n}$. 
Let $ \mathfrak{p} $ be a transformation defined as follows.
\[
\mathfrak{p}(01)=011,\ \mathfrak{p}(10)=101,\ \mathfrak{p}(00)=010\ \mbox{and}\ \mathfrak{p}(11)=100.
\]
We also define a map $ \uppsi  $ which operates on the suffix of a word in $\mathfrak{T}_n $,
\begin{align*}
\uppsi :\mathfrak{T}_n&\longrightarrow \{0,1\}^{n+1}:=\big\{\sigma_1\sigma_2\cdots\sigma_n\sigma_{n+1}\mid\sigma_{i}\in\{0,1\}\big\}\\
\omega=\alpha\beta&\longmapsto \uppsi (\omega)=\mathfrak{p}(\alpha)\beta,\ \textit{with}\ \alpha\in\{00,01,10,11\}.
\end{align*}
Then we have
\begin{equation}\label{eq:taupsi}
\mathfrak{T}_{n+1}=\uppsi (\mathfrak{T}_n)\cup\big( 00\mathcal{T}_{n+1}\cup 11\mathcal{T}_{n+1}\big).
\end{equation}
\end{corollary}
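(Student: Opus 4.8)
The plan is to recognize that this corollary is simply a notational repackaging of \hyperref[Lem:lemmtau]{Lemma~\ref*{Lem:lemmtau}}, so the real content reduces to computing $\uppsi(\mathfrak{T}_n)$ explicitly and matching it against the bracketed term of that lemma. First I would recall the prefix decomposition $\mathfrak{T}_n=01\mathcal{F}_n\cup 10\mathcal{F}_n\cup 00\mathcal{T}_n\cup 11\mathcal{T}_n$, valid for $n\geq 1$. Since every $\omega\in\mathfrak{T}_n$ is produced by first splitting the two crossings of the link part, its two-letter prefix $\alpha$ is exactly its first two bits, so the factorization $\omega=\alpha\beta$ with $\alpha\in\{00,01,10,11\}$ is unique and $\uppsi$ is well defined on $\mathfrak{T}_n$.

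Next, because $\uppsi$ rewrites only this prefix through $\mathfrak{p}$ and leaves the suffix $\beta$ fixed pointwise, applying it termwise to the four pieces of the decomposition gives
\[
\uppsi(\mathfrak{T}_n)=\mathfrak{p}(01)\mathcal{F}_n\cup\mathfrak{p}(10)\mathcal{F}_n\cup\mathfrak{p}(00)\mathcal{T}_n\cup\mathfrak{p}(11)\mathcal{T}_n=011\mathcal{F}_n\cup 101\mathcal{F}_n\cup 010\mathcal{T}_n\cup 100\mathcal{T}_n.
\]
This right-hand side is precisely the parenthesized term appearing in \hyperref[Lem:lemmtau]{Lemma~\ref*{Lem:lemmtau}}, so substituting it into the statement of that lemma immediately yields $\mathfrak{T}_{n+1}=\uppsi(\mathfrak{T}_n)\cup\big(00\mathcal{T}_{n+1}\cup 11\mathcal{T}_{n+1}\big)$ for $n\geq 1$, which is \eqref{eq:taupsi}. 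Since the lemma was stated only for $n\geq 1$, I would dispatch the base case $n=0$ by direct computation, using $\mathcal{F}_0=\{\varepsilon\}$ and $\mathcal{T}_1=\{0\}$: here $\uppsi(\mathfrak{T}_0)=\{011,101\}$ and $00\mathcal{T}_1\cup 11\mathcal{T}_1=\{000,110\}$, whose union is $\{000,011,101,110\}=\mathfrak{T}_1$.

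There is no genuine obstacle in this argument; the only point demanding care is the bookkeeping that $\uppsi$ distributes over the prefix decomposition of $\mathfrak{T}_n$ and that $\mathfrak{p}$ carries the four distinct prefixes $01,10,00,11$ to the four distinct images $011,101,010,100$. This injectivity guarantees that the four pieces of $\uppsi(\mathfrak{T}_n)$ remain pairwise disjoint and coincide exactly with the four crescent contributions already isolated in \hyperref[Lem:lemmtau]{Lemma~\ref*{Lem:lemmtau}}, so the set equality holds without any further analysis of the individual words.
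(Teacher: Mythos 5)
Your proof is correct and follows essentially the same route as the paper's: both compute $\uppsi(\mathfrak{T}_n)=011\mathcal{F}_n\cup 101\mathcal{F}_n\cup 010\mathcal{T}_n\cup 100\mathcal{T}_n$ by distributing $\uppsi$ over the prefix decomposition of $\mathfrak{T}_n$ (using that the four prefixes, and hence their images under $\mathfrak{p}$, are distinct) and then conclude by Lemma~\ref{Lem:lemmtau}. Your explicit check of the case $n=0$ is a small addition the paper omits, since its argument rests on the lemma stated only for $n\geq 1$.
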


\begin{proof}
Notice that the map $\uppsi $ is injective. Indeed, for $ \omega $ and $ \omega' $ in $ \mathfrak{T}_n $ the equality $ \uppsi (\omega)=\uppsi (\omega') $ is word equality, i.e., both side are considered equal only if they are made up of the same number of letter and the same letters at identical positions. It follows that $ \omega=\omega'$. Now consider the restriction of the set $\mathfrak{T}_{n+1}$ to $ 011\mathcal{F}_n\cup 101 \mathcal{F}_n \cup 010\mathcal{T}_n\cup 100\mathcal{T}_n $. It is clear that the map $ \uplambda $ defined as follows is a bijection.
\[
\begin{array}{rcl}
\uplambda:01\mathcal{F}_{n}\times 10\mathcal{F}_{n}\times 00\mathcal{T}_{n}\times 11\mathcal{T}_{n}&\longrightarrow&
011\mathcal{F}_n\times 101 \mathcal{F}_n \times 010\mathcal{T}_n\times 100\mathcal{T}_n\\
\left(01\beta_1,10\beta_2,00\beta_3,11\beta_4\right)&\longmapsto&\left(011\beta_1,101\beta_2,010\beta_3,100\beta_4\right).
\end{array}
\]
Since $ 01\mathcal{F}_{n}\cap 10\mathcal{F}_{n}\cap 00\mathcal{T}_{n}\cap 11\mathcal{T}_{n}=\varnothing$, it follows immediately that 
\[
\uppsi (\mathfrak{T}_n)=011\mathcal{F}_n\cup 101 \mathcal{F}_n \cup 010\mathcal{T}_n\cup 100\mathcal{T}_n,
\]
and we conclude by \hyperref[Lem:lemmtau]{Lemma~\ref*{Lem:lemmtau}}.
\end{proof}

\begin{example}
Consider the set $ \mathfrak{T}_1 =\{\mathbf{00}0,\mathbf{01}1,\mathbf{10}1,\mathbf{11}0\} $. We have 
\begin{equation*}
\uppsi (\mathfrak{T}_1)=\{\mathbf{010}0,\mathbf{011}1,\mathbf{101}1,\mathbf{100}0\}
\end{equation*}
and
\begin{equation*}
\mathcal{T}_2=\{01,10\} .
\end{equation*}
Then applying formula \eqref{eq:taupsi}, we have
\[
\mathfrak{T}_2=\{0100,0111,1011,1000,0001,0010,1101,1110\}.
\]
\end{example}

\begin{proposition}\label{Prop:psimap}
When $ n\geq 1 $, the set $ \mathfrak{T}_n $ is obtained by the formula
\begin{equation}\label{eq:psimap}
\mathfrak{T}_{n}=\uppsi^{n}\big(\{01,10\}\big)\cup\left(\bigcup\limits_{p=0}^{n-1} \uppsi^{p}(00)\mathcal{T}_{n-p}\right)\cup\left(\bigcup\limits_{p=0}^{n-1} \uppsi^{p}(11)\mathcal{T}_{n-p}\right),
\end{equation}
where $ \uppsi^n= \uppsi \circ\uppsi \circ\cdots\circ\uppsi $ and $ \uppsi^0=Id $.
\end{proposition}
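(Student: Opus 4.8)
The plan is to prove formula \eqref{eq:psimap} by induction on $n$, using the one-step recurrence \eqref{eq:taupsi} as the driving engine. Throughout I would read $\uppsi$ as acting on any binary word of length at least two — not merely on the words of $\mathfrak{T}_n$ — since its definition only rewrites the first two letters through $\mathfrak{p}$ and copies the remaining suffix; this is what makes the standalone symbols $\uppsi^{p}(00)$, $\uppsi^{p}(11)$ and $\uppsi^{p}(\{01,10\})$ meaningful. For the base case $n=1$, I would simply unwind both sides: the example preceding this proposition records $\mathfrak{T}_1=\{000,011,101,110\}$, while on the right-hand side $\uppsi(\{01,10\})=\{011,101\}$ and, since $\mathcal{T}_1=\{0\}$, the two single-term unions contribute $00\mathcal{T}_1\cup 11\mathcal{T}_1=\{000,110\}$. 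The union of these three sets is exactly $\mathfrak{T}_1$.

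For the inductive step I would assume \eqref{eq:psimap} at level $n$ and apply $\uppsi$ to the whole expression, invoking \eqref{eq:taupsi} in the form $\mathfrak{T}_{n+1}=\uppsi(\mathfrak{T}_n)\cup 00\mathcal{T}_{n+1}\cup 11\mathcal{T}_{n+1}$. The crucial elementary observation is that, because $\uppsi$ rewrites only the first two letters of its argument and leaves the remaining suffix untouched, one has $\uppsi(w\beta)=\uppsi(w)\beta$ for any binary word $w$ of length at least two and any suffix $\beta$. Since each prefix $\uppsi^{p}(00)$ and $\uppsi^{p}(11)$ has length $p+2\geq 2$, applying this to $w=\uppsi^{p}(00)$ (respectively $w=\uppsi^{p}(11)$) and $\beta\in\mathcal{T}_{n-p}$ yields $\uppsi\big(\uppsi^{p}(00)\mathcal{T}_{n-p}\big)=\uppsi^{p+1}(00)\mathcal{T}_{n-p}$ and the analogous identity for $11$. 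As $\uppsi$ is a function it distributes over the unions in the induction hypothesis, so $\uppsi(\mathfrak{T}_n)$ unfolds into $\uppsi^{n+1}(\{01,10\})$ together with the two families $\uppsi^{p+1}(00)\mathcal{T}_{n-p}$ and $\uppsi^{p+1}(11)\mathcal{T}_{n-p}$ for $0\leq p\leq n-1$.

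I would then re-index these unions by setting $q=p+1$, turning them into $\bigcup_{q=1}^{n}\uppsi^{q}(00)\mathcal{T}_{(n+1)-q}$ and $\bigcup_{q=1}^{n}\uppsi^{q}(11)\mathcal{T}_{(n+1)-q}$. Finally I would observe that the two extra terms supplied by \eqref{eq:taupsi} are precisely the missing $q=0$ members, namely $\uppsi^{0}(00)\mathcal{T}_{n+1}=00\mathcal{T}_{n+1}$ and $\uppsi^{0}(11)\mathcal{T}_{n+1}=11\mathcal{T}_{n+1}$, so adjoining them extends both unions to range over $0\leq q\leq n$. Collecting everything produces exactly \eqref{eq:psimap} with $n$ replaced by $n+1$, which closes the induction.

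The only genuine obstacle lies in the second paragraph: one must justify carefully that $\uppsi$ preserves the $\mathcal{T}_{n-p}$-suffix — which is precisely where the length-at-least-two condition on $\uppsi^{p}(00)$ and $\uppsi^{p}(11)$ enters — and that the iterate satisfies $\uppsi\circ\uppsi^{p}=\uppsi^{p+1}$ on these standalone prefixes and not merely on full members of $\mathfrak{T}_n$. Everything beyond that is routine index shifting, the set-level distributivity of $\uppsi$ over unions, and the recombination of the $q=0$ terms with the shifted unions.
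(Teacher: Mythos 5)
Your proof is correct and follows essentially the same route as the paper's own argument: induction on $n$ driven by the recurrence $\mathfrak{T}_{n+1}=\uppsi(\mathfrak{T}_n)\cup 00\mathcal{T}_{n+1}\cup 11\mathcal{T}_{n+1}$, distribution of $\uppsi$ over the unions in the induction hypothesis, an index shift $q=p+1$, and absorption of the two extra terms as the $q=0$ members. Your explicit justification that $\uppsi(w\beta)=\uppsi(w)\beta$ for prefixes $w$ of length at least two (so that $\uppsi\circ\uppsi^{p}=\uppsi^{p+1}$ makes sense on the standalone words $00$ and $11$) is a point the paper leaves implicit, but it does not change the substance of the argument.
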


\begin{proof}
Let us use induction to prove that formula \eqref{eq:psimap} holds for any $ n\geq 1 $. 

When $ n=1 $, we have
\begin{align*}
\mathfrak{T}_{1}&=\uppsi^{1}\big(\{01,10\}\big)\cup\left(\bigcup\limits_{p=0}^0 \uppsi^{p}(00)\mathcal{T}_{1-p}\right)\cup\left(\bigcup\limits_{p=0}^0 \uppsi^{p}(11)\mathcal{T}_{1-p}\right)\\
&=\{011,101,000,110\}.
\end{align*}
Assume that the identity holds until $ n=\ell> 1$ and let us show that it still holds when $ n=\ell+1 $. Formula \eqref{eq:taupsi} allows us to write $ \mathfrak{T}_{\ell+1} $ as follows.
\begin{align*}
\mathfrak{T}_{\ell+1}&=\uppsi (\mathfrak{T}_\ell)\cup 00\mathcal{T}_{\ell+1}\cup11\mathcal{T}_{\ell+1}\\
&=\uppsi (\mathfrak{T}_\ell)\cup\uppsi ^{0}(00)\mathcal{T}_{\ell+1}\cup\uppsi ^{0}(11)\mathcal{T}_{\ell+1},
\end{align*}
where $ \mathfrak{T}_\ell $ is given by the induction hypothesis, i.e.,
\begin{align*}
\uppsi (\mathfrak{T}_\ell)&=\uppsi \left(\uppsi^{\ell}\big(\{01,10\}\big)\cup\left(\bigcup\limits_{p=0}^{\ell-1} \uppsi^{p}(00)\mathcal{T}_{\ell-p}\right)\cup\left(\bigcup\limits_{p=0}^{\ell-1} \uppsi^{p}(11)\mathcal{T}_{\ell-p}\right)\right)\\
&=\uppsi^{\ell+1}\big(\{01,10\}\big)\cup\left(\bigcup\limits_{p=0}^{\ell-1} \uppsi^{p+1}(00)\mathcal{T}_{\ell-p}\right)\cup\left(\bigcup\limits_{p=0}^{\ell-1} \uppsi^{p+1}(11)\mathcal{T}_{\ell-p}\right).
\end{align*}
Thus
\begin{align*}
\mathfrak{T}_{\ell+1}&=\uppsi^{\ell+1}\big(\{01,10\}\big)\cup\left(\bigcup\limits_{p=0}^{\ell-1} \uppsi^{p+1}(00)\mathcal{T}_{\ell-p}\right)\cup \uppsi^{0}(00)\mathcal{T}_{\ell+1}\cup\left(\bigcup\limits_{p=0}^\ell \uppsi^{p+1}(11)\mathcal{T}_{\ell-p}\right)\\
&\hphantom{=}\ \cup \uppsi^{0}(11)\mathcal{T}_{\ell+1}\\
&=\uppsi^{\ell+1}\big(\{01,10\}\big)\cup\left(\bigcup\limits_{p=0}^{\ell }\uppsi^{p}(00)\mathcal{T}_{\ell-p+1}\right)\cup\left(\bigcup\limits_{p=0}^{\ell} \uppsi^{p}(11)\mathcal{T}_{\ell-p+1}\right).
\end{align*} 
\end{proof}

We might also take advantage of the definition of the map $ \uppsi $ as follows.
\begin{proof}[Alternative proof of {\hyperref[Prop:psimap]{Proposition~\ref*{Prop:psimap}}}]
For $ p \in \mathbb{N}^*$, the map $ \uppsi  $ have the following properties:

\begin{equation}\label{eq:psip}
\uppsi^p(01)=011^p,\ \ \uppsi^p(01)=101^p,\ \ \uppsi^p(00)=011^{p-1}0\ \ \mbox{and}\ \ \uppsi^p(11)=101^{p-1}0.
\end{equation}

Now recall the recurrence which link $ \mathcal{F}_n $ with $ \mathcal{T}_{n-1} $
\begin{equation*}
\mathcal{F}_0=\left\{\varepsilon\right\},\ \mathcal{F}_1=\left\{1\right\},\ \mathcal{F}_{n}=0\mathcal{T}_{n-1}\cup1\mathcal{F}_{n-1},\ n\geq2.
\end{equation*} 
We can unfold this recurrence and rewrite $ \mathcal{F}_{n} $ as belows
\begin{equation*}
\mathcal{F}_{n}=\left\{1^{n}\right\}\cup\left(\bigcup\limits_{p=0}^{n-2}1^{p}0\mathcal{T}_{n-p-1}\right),\ n\geq 2 .
\end{equation*}
Now, back to the decomposition of $ \mathfrak{T}_n $, we have
\begin{align*}
\mathfrak{T}_n&= 01\mathcal{F}_n\cup 10\mathcal{F}_n\cup00\mathcal{T}_n\cup 11\mathcal{T}_n\\
&=01\left(\left\{1^{n}\right\}\cup\left(\bigcup\limits_{p=0}^{n-2}1^{p}0\mathcal{T}_{n-p-1}\right)\right)\cup10\left(\left\{1^{n}\right\}\cup\left(\bigcup\limits_{p=0}^{n-2}1^{p}0\mathcal{T}_{n-p-1}\right)\right)\cup00\mathcal{T}_n\cup 11\mathcal{T}_n\\
&=\{011^n,101^n\}\cup\left(\bigcup\limits_{p=0}^{n-2} 011^{p}0\mathcal{T}_{n-p-1}\right)\cup00\mathcal{T}_n\cup\left(\bigcup\limits_{p=0}^{n-2} 101^{p}0\mathcal{T}_{n-p-1}\right)\cup 11\mathcal{T}_n.
\end{align*}
We conclude by \eqref{eq:psip}.
\end{proof}

As similar as the previous section, we have a recurrence formula, namely \eqref{eq:psimap}, which allows us to verify that
\begin{align*}
\#\mathfrak{T}_{n}&=\#\uppsi^{n}\big(\{01,10\}\big)+\sum\limits_{p=0}^{n-1}\# \uppsi^{p}(00)\mathcal{T}_{n-p}+\sum\limits_{p=0}^{n-1}\# \uppsi^{p}(11)\mathcal{T}_{n-p}\\
&=2+2\sum\limits_{p=0}^{n-1}(n-p)\\
&=\seqnum{A014206}(n).
\end{align*}
We confirm that the set $ \mathcal{P}_{n+1} $ and $ \mathfrak{T}_n $ have the same cardinality. The following section is then devoted to finding a bijection between these sets.
\section{The bijection}\label{Sec:Bijection}
Recall that
\begin{equation*}
\mathfrak{T}_0:=\{01,10\},\ \mathfrak{T}_n:=01\mathcal{F}_n\cup 10\mathcal{F}_n\cup 00\mathcal{T}_n\cup 11\mathcal{T}_n,\ n\geq1
\end{equation*}
and 
\begin{equation*}
\mathcal{P}_1=\{0,1\},\ \mathcal{P}_2=\{00,01,10,11\},\ \mathcal{P}_n:=\left\{0^n,1^n\right\}\cup\mathcal{P}_n^{\mathtt{01}}\cup\mathcal{P}_n^{\mathtt{10}}\cup\mathcal{P}_n^{\mathtt{00}}\cup\mathcal{P}_n^{\mathtt{11}},\ n\geq 3. 
\end{equation*}
\begin{itemize}
\item When $ n=0 $, we define an obvious one-to-one map $ \upvarphi $ defined by
\[
\upvarphi :\mathcal{P}_1=\{0,1\}\longrightarrow\mathfrak{T}_0=\{01,10\}
\]
such that $ \upvarphi (0)=01 $ and $ \upvarphi (1)=10 $.

\item When $ n=1 $, we also define an obvious one-to-one map $ \upvarphi $ defined by
\[
\upvarphi :\mathcal{P}_2=\{00,01,10,11\}\longrightarrow\mathfrak{T}_1=\{000,011,101,110\}
\]
such that $ \upvarphi (00)=000 $, $ \upvarphi (01)=011 $, $ \upvarphi (10)=101 $ and $ \upvarphi (11)=110 $.
\end{itemize}

In order to give an intuitive construction for the bijection $ \upvarphi $ between $ \mathcal{P}_{n+1} $ and $ \mathfrak{T}_n $, we also recall the following formulas:
\begin{equation*}
\mathcal{P}_{n+1}=\left\{0^{n+1},1^{n+1}\right\}\cup\left(\bigcup\limits_{p=0}^{n-1}\mathcal{P}_{n-p+1}^{\mathtt{00}}0^{p}\right)\cup\left(\bigcup\limits_{p=0}^{n-1}\mathcal{P}_{n-p+1}^{\mathtt{11}}1^{p}\right),\ n\geq 2,
\end{equation*}
and
\begin{align*}
\mathfrak{T}_n=\{011^n,101^n\}\cup\left(\bigcup\limits_{p=0}^{n-1} \uppsi^{p}(00)\mathcal{T}_{n-p}\right)\cup\left(\bigcup\limits_{p=0}^{n-1} \uppsi^{p}(11)\mathcal{T}_{n-p}\right),\ n\geq 2.
\end{align*}
Since the set $ \mathcal{P}_{n+1} $ and $ \mathfrak{T}_n $ have the same cardinality, then it actually suffices to construct an injective map. Let $ \ell $, $ r $ be nonnegative integers, and let us introduce the following notations:
\begin{itemize}
\item 	$ \pi_{\ell,r} =0^r1^{\ell-r}\in\mathcal{P}_\ell^{\mathtt{00}}$ and $ \overline{\pi}_{\ell,r} =1^r0^{\ell-r}\in\mathcal{P}_\ell^{\mathtt{11}}$ where $ 1\leq r\leq \ell-1 $;
\item $ \omega_{\ell,r} =1^r01^{\ell-r-1}\in\mathcal{T}_{\ell}$ where $ 0\leq r\leq \ell -1$.
\end{itemize}

\begin{lemma}\label{Lem:bij1}
Let the map $ \upphi$ and $\overline{\upphi }$ be defined as
\begin{align*}\label{Lem:phi}
\upphi:\left\{\pi_{\ell+1,k}=0^k1^{\ell+1-k}\ | \ 1\leq k\leq \ell\right\}&\longrightarrow \left\{\omega_{\ell,k}=1^k01^{\ell-k-1}\mid 0\leq k\leq \ell-1\right\}\\
\pi_{\ell+1,r}&\longmapsto\omega_{\ell,r-1}
\end{align*}
and
\begin{align*}
\overline{\upphi }:\left\{\overline{\pi}_{\ell+1,k}=1^k0^{\ell+1-k}\ | \ 1\leq k\leq \ell\right\}&\longrightarrow \left\{\omega_{\ell,k}=1^k01^{\ell-k-1}\mid 0\leq k\leq \ell-1\right\}\\
\overline{\pi}_{\ell+1,r}&\longmapsto\omega_{\ell,r-1}.
\end{align*}
Then $ \upphi_p$ and $\overline{\upphi }_p$ are bijective maps.
\end{lemma}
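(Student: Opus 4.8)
The plan is to recognize both $\upphi$ and $\overline{\upphi}$ as pure reindexing maps, so that their bijectivity reduces entirely to the bijectivity of the integer shift $r\mapsto r-1$ between the relevant index ranges. First I would record that the assignment $k\mapsto 0^k1^{\ell+1-k}$ is itself a bijection from $\{1,2,\ldots,\ell\}$ onto the domain of $\upphi$: two such words agree precisely when they have the same number of leading $0$'s, so distinct indices give distinct words, and every word in the domain arises this way. The same observation (now counting leading $1$'s) shows that $k\mapsto 1^k0^{\ell+1-k}$ is a bijection from $\{1,\ldots,\ell\}$ onto the domain of $\overline{\upphi}$, while $k\mapsto 1^k01^{\ell-k-1}$ is a bijection from $\{0,1,\ldots,\ell-1\}$ onto the common codomain $\mathcal{T}_\ell$, the index $k$ being recovered as the number of $1$'s preceding the unique $0$.

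With these labelings in hand, $\upphi$ factors as the composite of the inverse labeling $\pi_{\ell+1,r}\mapsto r$, the shift $r\mapsto r-1$, and the labeling $r-1\mapsto\omega_{\ell,r-1}$. The one point that actually requires checking is that this shift genuinely carries the domain index set onto the codomain index set: as $r$ runs over $\{1,\ldots,\ell\}$, the value $r-1$ runs bijectively over $\{0,\ldots,\ell-1\}$. This single fact simultaneously confirms that $\upphi$ is well defined (each image $\omega_{\ell,r-1}$ really lies in the codomain), that it is surjective (every $\omega_{\ell,k}$ is the image of $\pi_{\ell+1,k+1}$), and, combined with the injectivity of $r\mapsto r-1$ and of the word labelings, that it is injective. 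The verification for $\overline{\upphi}$ is word-for-word identical, with $0^k1^{\ell+1-k}$ in the domain replaced by $1^k0^{\ell+1-k}$.

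I do not expect any real obstacle here: the content of the statement is simply that the two families $\mathcal{P}_{\ell+1}^{\mathtt{00}}$ and $\mathcal{P}_{\ell+1}^{\mathtt{11}}$ from \hyperref[Cor:definitionPn]{Corollary~\ref*{Cor:definitionPn}}, each of cardinality $\ell$, are placed in explicit one-to-one correspondence with the $\ell$-element set $\mathcal{T}_\ell$ by a uniform decrement of the single free index. I would therefore keep the write-up short, verifying only well-definedness and the bijectivity of the index shift, and let the bijectivity of the underlying word labelings stand as an immediate observation. (I read the subscripts in the phrase ``$\upphi_p$ and $\overline{\upphi}_p$'' as a typographical slip for the maps $\upphi$ and $\overline{\upphi}$ actually defined in the statement.)
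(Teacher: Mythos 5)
Your proof is correct and takes essentially the same route as the paper: the paper's own (one-line) proof simply observes that as $r$ runs over $\{1,\ldots,\ell\}$ the index $r-1$ runs over $\{0,\ldots,\ell-1\}$, which is exactly your index-shift argument. Your write-up just makes explicit the step the paper leaves implicit, namely that the labelings $k\mapsto\pi_{\ell+1,k}$, $k\mapsto\overline{\pi}_{\ell+1,k}$ and $k\mapsto\omega_{\ell,k}$ are themselves bijections onto the respective word sets.
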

\begin{proof}
We have $ 1\leq r\leq \ell$, and it suffices to browse the index $ r $ from the domain of each map to the corresponding target set.
\end{proof}
The map $ \upphi $ can be interpreted as follows. A word in $ \mathcal{P}_n^{\mathtt{00}} $ is a compound of one block of $ 0 $'s and one block of $ 1 $'s. In the block of $ 1 $'s, replace the $ 0 $'s by $ 1 $'s except the rightmost bit, and in the block of $ 1 $'s, rightmost bit. For instance 
\begin{align*}
\upphi(\underline{0}11111111\textcolor{red}{1})&=\underline{0}11111111;\\
\upphi(\mathbf{0000}\underline{0}1111\textcolor{red}{1})&=\mathbf{1111}\underline{0}1111;\\
\upphi(\mathbf{00000000}\underline{0}\textcolor{red}{1})&=\mathbf{11111111}\underline{0}.
\end{align*}
On the other hand, we interpret the map $\overline{\upphi } $ as follows. A word in $ \mathcal{P}_n^{\mathtt{11}} $ is a compound of one block of $ 1$'s and one block of $ 0$'s. In the block of $ 1 $'s, remove the leftmost bit, and in the block of $ 0 $'s, replace the $ 0 $'s by $ 1 $'s except the leftmost bit. For instance
\begin{align*}
\overline{\upphi}(\textcolor{red}{1}\underline{0}\mathbf{00000000})&=\underline{0}\mathbf{11111111};\\
\overline{\upphi}(\textcolor{red}{1}1111\underline{0}\mathbf{0000})&=1111\underline{0}\mathbf{1111};\\
\overline{\upphi}(\textcolor{red}{1}11111111\underline{0})&=11111111\underline{0}.
\end{align*}
Now, a word in $ \mathcal{T}_n $ is a compound of one block of $ 1 $'s, one block of $ 0 $ and another block of $ 1 $'s. Only one of the block of $ 1 $'s might eventually be empty. Therefore, the inverse maps $ \upphi^{-1}$ and $\overline{\upphi }^{-1}$ are respectively described as follows. 
\begin{enumerate}[(i)]
\item Replace the leftmost block of $ 1 $'s into a block of $ 0 $'s of the same length, then append $ 1 $ at the end of the rightmost block of $ 1 $'s. The map $ \upphi^{-1} $ is defined as
\begin{align*}
\upphi^{-1}:\left\{\omega_{\ell,k}=1^k01^{\ell-k-1}\mid 0\leq k\leq \ell-1\right\}&\longrightarrow \left\{\pi_{\ell+1,k}=0^k1^{\ell+1-k}\ | \ 1\leq k\leq \ell\right\}\\
\omega_{\ell,r}&\longmapsto\pi_{\ell+1,r+1}.
\end{align*}
\item Replace the rightmost block of $ 1 $'s into a block of $ 0 $'s of the same length, then append $ 1 $ at the beginning of the leftmost block of $ 1 $'s. The map $ \overline{\upphi}^{-1} $ is in turn defined as
\end{enumerate}
\begin{align*}
\overline{\upphi }^{-1}:\left\{\omega_{\ell,k}=1^k01^{\ell-k-1}\mid 0\leq k\leq \ell-1\right\}&\longrightarrow \left\{\overline{\pi}_{\ell+1,k}=1^k0^{\ell+1-k}\ | \ 1\leq k\leq \ell\right\}\\
\omega_{r}&\longmapsto\overline{\pi}_{\ell+1,r+1}.
\end{align*}
\begin{corollary}
Let $ p $ be a nonnegative integer with $p\in\{1,2,\ldots,\ell-2\}$, and let the map $ \upphi _{p}$ and $\overline{\upphi }_{p}$ be defined as
\begin{align*}
\upphi _{p}:\left\{\pi_{\ell-p+1,k}\ |\ 1\leq k\leq \ell-p\right\}0^p&\longrightarrow \uppsi^p(00)\left\{\omega_{\ell-p,k}\mid 0\leq k\leq \ell-p-1\right\}\\
\pi_{\ell-p+1,r}0^p&\longmapsto\uppsi^p(00)\omega_{\ell-p,r-1}
\end{align*}
and
\begin{align*}
\overline{\upphi} _{p}:\left\{\overline{\pi}_{\ell-p+1,k}\ |\ 1\leq k\leq \ell-p\right\}1^p&\longrightarrow \uppsi^p(11)\left\{\omega_{\ell-p,k}\mid 0\leq k\leq \ell-p-1\right\}\\
\overline{\pi}_{\ell-p+1,r}1^p&\longmapsto\uppsi^p(11)\omega_{\ell-p,r-1}.
\end{align*}
Then $ \upphi_{p}$ and $\overline{\upphi }_{p}$ are bijective maps.
\end{corollary}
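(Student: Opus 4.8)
The plan is to recognize each of $\upphi_p$ and $\overline{\upphi}_p$ as a decorated version of the bijections already established in \hyperref[Lem:bij1]{Lemma~\ref*{Lem:bij1}}: one simply appends a fixed suffix block to every source word and prepends a fixed prefix block to every target word. Since left- and right-concatenation by a fixed word are injective operations on words, wrapping a bijection in this way cannot introduce any collisions, so the decorated map remains a bijection.

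First I would invoke \hyperref[Lem:bij1]{Lemma~\ref*{Lem:bij1}} with its parameter $\ell$ replaced by $\ell-p$. The hypothesis $1\leq p\leq \ell-2$ forces $\ell-p\geq 2$, so the Lemma applies and yields that the core assignment $\pi_{\ell-p+1,r}\mapsto\omega_{\ell-p,r-1}$ is a bijection from $\left\{\pi_{\ell-p+1,k}\mid 1\leq k\leq \ell-p\right\}$ onto $\left\{\omega_{\ell-p,k}\mid 0\leq k\leq \ell-p-1\right\}$. The index ranges here match \emph{exactly} those occurring in the statement of $\upphi_p$, and the correspondence is precisely $r\mapsto r-1$ on a nonempty index set.

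Next I would observe that the two outer operations are bijections onto their images. Right-concatenation $w\mapsto w0^p$ is injective, hence a bijection from $\left\{\pi_{\ell-p+1,k}\right\}$ onto $\left\{\pi_{\ell-p+1,k}\right\}0^p$; and, because $\uppsi^p(00)=011^{p-1}0$ is a single fixed word depending only on $p$, left-concatenation $w\mapsto\uppsi^p(00)w$ is a bijection from $\left\{\omega_{\ell-p,k}\right\}$ onto $\uppsi^p(00)\left\{\omega_{\ell-p,k}\right\}$. Consequently $\upphi_p$ factors as
\[
\pi_{\ell-p+1,r}0^p\ \longmapsto\ \pi_{\ell-p+1,r}\ \longmapsto\ \omega_{\ell-p,r-1}\ \longmapsto\ \uppsi^p(00)\,\omega_{\ell-p,r-1},
\]
where the three arrows are, respectively, stripping the suffix, applying the bijection $\upphi$, and prepending the prefix. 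Each arrow is a bijection, so the composite is a bijection. The argument for $\overline{\upphi}_p$ is verbatim the same after replacing $0^p$ by $1^p$, $\uppsi^p(00)$ by $\uppsi^p(11)$, and $\upphi$ by $\overline{\upphi}$.

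There is no genuine obstacle here; the entire content is the bookkeeping that concatenation by a fixed block on either side preserves distinctness of words, so that decorating the bijection of \hyperref[Lem:bij1]{Lemma~\ref*{Lem:bij1}} with the blocks $\uppsi^p(00)$, $\uppsi^p(11)$ and $0^p$, $1^p$ creates no new coincidences. The only point I would take a moment to verify is that the source index range $1\leq r\leq \ell-p$ and the shifted target index range $0\leq r-1\leq \ell-p-1$ are matched by the shift $r\mapsto r-1$ supplied by the Lemma, which the constraint $1\leq p\leq \ell-2$ keeps nonempty and well-defined.
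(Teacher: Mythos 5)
Your proposal is correct and takes essentially the approach the paper intends: the paper states this corollary without any proof, treating it as an immediate consequence of \hyperref[Lem:bij1]{Lemma~\ref*{Lem:bij1}}, and your argument --- the lemma's bijection with $\ell$ replaced by $\ell-p$ (legitimate since $p\leq\ell-2$ gives $\ell-p\geq 2$), sandwiched between the injective operations of appending the fixed suffix $0^p$ and prepending the fixed word $\uppsi^p(00)=011^{p-1}0$ (resp.\ $1^p$ and $\uppsi^p(11)=101^{p-1}0$) --- is precisely the reasoning that justifies calling it a corollary. Your bookkeeping of the index shift $r\mapsto r-1$ against the stated ranges is also correct, so nothing is missing.
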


We complete the bijection by imposing $ \upvarphi(0^{n+1})=011^n $ and $ \upvarphi(1^{n+1})=101^n $. In practice, it is more convenient to decompose the map $ \upvarphi $ into the following restrictions. First of all, the ``new regions'' mapping:
\[\begin{array}{rcr}
\begin{array}{rl}
\upvarphi _{00}:\mathcal{P}_{n+1}^{\mathtt{00}}&\longrightarrow 00\mathcal{T}_{n}\\
\pi&\longmapsto 00\upphi(\pi)
\end{array}
& \mbox{and} &
\begin{array}{rl}
\upvarphi _{11}:\mathcal{P}_{n+1}^{\mathtt{11}}&\longrightarrow 11\mathcal{T}_{n}\\
\pi&\longmapsto 11\overline{\upphi}(\pi).
\end{array}
\end{array}\]
Then, let $ \pi \in \mathcal{P}_{n+1}^{\mathtt{01}}$ (resp. $ \pi \in \mathcal{P}_{n+1}^{\mathtt{10}}$), and let $ p $ be the largest nonnegative integer, with $ p<n-1 $, such that we can write $ \pi $ as the compound $ \pi=\pi_1 0^p $ (resp. $ \pi=\pi_1 1^p $). The associated maps are
\begin{align*}
\upvarphi _{01}:\widehat{\mathcal{P}}_{n+1}^{\mathtt{01}}:=\mathcal{P}_{n+1}^{\mathtt{01}}\cup\{0^{n+1}\}&\longrightarrow 01\mathcal{F}_{n}\\
\pi&\longmapsto\begin{cases}
011^{n}, & \text{if $\pi=0^{n+1}$;}\\
011^{p-1}0\upphi(\pi_1), & \text{if $ \pi=\pi_1 0^p$}
\end{cases}
\end{align*}
and
\begin{align*}
\upvarphi _{10}:\widehat{\mathcal{P}}_{n+1}^{\mathtt{10}}:=\mathcal{P}_{n+1}^{\mathtt{10}}\cup\{1^{n+1}\}&\longrightarrow 10\mathcal{F}_{n}\\
\pi&\longmapsto\begin{cases}
101^{n}, & \text{if $\pi=1^{n+1}$;}\\
101^{p-1}0\overline{\upphi}(\pi_1), & \text{if $ \pi=\pi_1 1^p$}.
\end{cases}
\end{align*}
The inverse image $ \upvarphi^{-1} $ can be constructed as belows.
\begin{itemize}
\item If $ \omega=00\omega_1\in00\mathcal{T}_n $ then $ \upvarphi_{00}^{-1}(\omega)=\upphi^{-1}(\omega_1) $;
\item if $ \omega=11\omega_1\in11\mathcal{T}_n $ then $ \upvarphi_{00}^{-1}(\omega)=\overline{\upphi}^{-1}(\omega_1) $;
\item if $ \omega=\omega_0\omega_1\in01\mathcal{F}_n $ where $ \omega_1 \in\mathcal{T}_\ell$ for some nonnegative integer $ \ell $, then 
$$ \upvarphi_{01}^{-1}(\omega)=\upphi^{-1}(\omega_1) 0^{n-\ell}; $$
\item if $ \omega=\omega_0\omega_1\in10\mathcal{F}_n $ where $ \omega_1 \in\mathcal{T}_\ell$ for some nonnegative integer $ \ell $, then 
$$ \upvarphi_{10}^{-1}(\omega)=\overline{\upphi}^{-1}(\omega_1) 1^{n-\ell} .$$
\end{itemize}
Note that the key ingredient of the bijection is to identify a prefix for the one, and then transform it into a suffix for the other. Same scheme for the inverse. The following examples aims at illustrating these formulas.
\begin{enumerate}
\item Let $ 0001111111\in\mathcal{P}_{10} $. Then 
\begin{align*}
\upvarphi_{00}(0001111111)&=\textcolor{blue}{00}\upphi(\mathbf{00}\underline{0}111111\textcolor{red}{1}) \\
&=\textcolor{blue}{00}\mathbf{11}\underline{0}111111 \in00\mathcal{T}_{9}.
\end{align*}
\item Let $ 1110111111\in\mathfrak{T}_{8}$. Since $ 1110111111\in11\mathcal{T}_{8}$, we have 
\begin{align*}
\upvarphi^{-1}_{11}(\textcolor{blue}{11}10111111)&=\overline{\upphi}^{-1}(10\mathbf{111111})\\
&=\textcolor{red}{1}10\mathbf{000000}\in\mathcal{P}_9^{\mathtt{11}}.
\end{align*}
\item Let $ 1111000\textcolor{blue}{11111}\in\mathcal{P}_{12} $. We identify the suffix $ \textcolor{blue}{11111} $, whose associated prefix is $ \textcolor{blue}{1011110}$, and the factor $ 1111000 $ which is a word in $ \mathcal{P}_7^{\mathtt{11}} $. Therefore we have $ \overline{\upphi}(\textcolor{red}{1}111\underline{0}\mathbf{00}) = 111\underline{0}\mathbf{11}$. Finally we obtain the word $\textcolor{blue}{1011110}111\underline{0}\mathbf{11}\in10\mathcal{F}_{11} $.
\item Let $\textcolor{blue}{011111110}111011111 \in \mathfrak{T}_{16}$. The prefix and the corresponding suffix are respectively $\textcolor{blue}{011111110} $ and $ \textcolor{blue}{0000000} $, i.e, the remaining factor has to be mapped to $ \mathcal{P}_{10}^{\mathtt{01}} $. Now, since the factor $ 111011111 $ is a word in $ \mathcal{T}_9 $, the associated transformation is $ \upphi(\mathbf{111}011111) =\mathbf{000}011111\textcolor{red}{1}$. So, the final encoding is $ \mathbf{000}011111\textcolor{red}{1}\textcolor{blue}{0000000} \in\mathcal{P}_{17}$.
\end{enumerate}

Finally, we perform the bijection entrywise in \Tabs{tab:bijection} for $ n=0,1,2,3,4,5 $. For instance, we read $ \upvarphi _{00}(0011)=0010 $, $ \upvarphi _{11}(11100)=111101 $, 
$\upvarphi _{01}(011110)=0100111 $ and $\upvarphi _{10}(101)=1000 $.

 When $ n>0 $, then the entries in
\begin{itemize}
\item $ 01\mathcal{F}_{n} $ and $ 10\mathcal{F}_{n} $ are obtained by respectively applying $ \uppsi  $ to the entries of the previous top cell, i.e,
\begin{equation*}
 01\mathcal{F}_n:=\uppsi\left(01\mathcal{F}_{n-1}\cup00\mathcal{T}_{n-1}\right)\ \mbox{and} \  10\mathcal{F}_n:=\uppsi\left(10\mathcal{F}_{n-1}\cup11\mathcal{T}_{n-1}\right); 
\end{equation*}
\item $ \widehat{\mathcal{P}}_{n+1}^{\mathtt{01}} $ and $ \widehat{\mathcal{P}}_{n+1}^{\mathtt{10}} $ are obtained by respectively appending $ 0 $ and $ 1 $ at the right-end of the entries of the previous top cell, i.e.,
\begin{equation*}
\widehat{\mathcal{P}}_{n}^{\mathtt{01}} :=\left(\widehat{\mathcal{P}}_{n-1}^{\mathtt{01}} \cup\mathcal{P}_{n-1}^{\mathtt{00}} \right)0\ \mbox{and} \  \widehat{\mathcal{P}}_{n}^{\mathtt{10}} :=\left(\widehat{\mathcal{P}}_{n-1}^{\mathtt{10}} \cup\mathcal{P}_{n-1}^{\mathtt{11}} \right)1.
\end{equation*}
\end{itemize}
For example, consider the case $ n=5 $.

\begin{table}[H]
\centering
{
\def\arraystretch{1.25}
\begin{tabular}{|c|ll|rl||ll|rl|}
 
\hline
$n$ & $\widehat{\mathcal{P}}_{n+1}^{\mathtt{01}}$ & $\mathcal{P}_{n+1}^{\mathtt{00}}$ & $01\mathcal{F}_{n}$ & $00\mathcal{T}_{n}$ & $\widehat{\mathcal{P}}_{n+1}^{\mathtt{10}}$ & $\mathcal{P}_{n+1}^{\mathtt{11}}$ & $10\mathcal{F}_{n}$ & $11\mathcal{T}_{n}$\\
\hline 
\hline
$0$ & $0$ & & $01$ & & $1$ & & $10$ & \\
\hline 
$1$ & $00$ & $01$ & $011$ & $000$ & $11$ & $10$ & $101$ & $110$\\
\hline 
\multirow{2}{*}{$2$} & $000$ & $011$ & $0111$ & $0001$ & $111$ & $100$ & $1011$ & $1101$\\
 & $010$ & $001$ & $0100$ & $0010$ & \framebox[1.1\width]{$101$} & $110$ & \framebox[1.1\width]{$1000$} & $1110$\\
\hline 
\multirow{4}{*}{$3$} & $0000$ & $0111$ & $01111$ & $00011$ & $1111$ & $1000$ & $10111$ & $11011$\\
 & $0100$ & \framebox[1.1\width]{$0011$} & $01100$ & \framebox[1.1\width]{$00101$}& $1011$ & $1100$ & $10100$ & $11101$\\
 & $0110$ & $0001$ & $01001$ & $00110$ & $1001$ & $1110$ & $10001$ & $11110$\\
 & $0010$ & & $01010$ & & $1101$ & & $10010$ & \\
\hline 
\multirow{7}{*}{$4$} & $00000$ & $01111$ & $011111$ & $000111$ & $11111$ & $ 10000$ & $101111$ & $110111$\\
 & $01000$ & $00111$ & $011100$ & $001011$ & $10111$ & $ 11000$ & $101100$ & $111011$\\
 & $01100$ & $00011$& $011001$ & $001101$ & $10011$ & \framebox[1.1\width]{$ 11100$}& $101001$ & \framebox[1.1\width]{$111101$}\\
 & $00100$ &$ 00001$& $011010$ & $001110$ & $11011$ & $ 11110$ & $101010$ & $111110$\\
 & $01110$ & & $010011$ & & $10001$ & & $100011$ & \\
 & $00110$ & & $010101$ & & $11001$ & & $100101$ & \\
 & $00010$ & & $010110$ & & $11101$ & & $100110$ & \\
\hline 
\multirow{11}{*}{$5$} & $00000\textcolor{red}{0}$ & $011111$ & $\textcolor{red}{011}1111$ & $0001111$ & $11111\textcolor{red}{1}$ & $100000$ & $\textcolor{red}{101}1111$ & $1101111$\\
 & $01000\textcolor{red}{0}$ & $001111$ & $\textcolor{red}{011}1100$ & $0010111$ & $10111\textcolor{red}{1}$ & $110000$ & $\textcolor{red}{101}1100$ & $1110111$\\
 & $01100\textcolor{red}{0}$ & $000111$ & $\textcolor{red}{011}1001$ & $0011011$ & $10011\textcolor{red}{1}$ & $111000$ & $\textcolor{red}{101}1001$ & $1111011$\\
 & $00100\textcolor{red}{0}$ & $000011$ & $\textcolor{red}{011}1010$ & $0011101$ & $11011\textcolor{red}{1}$ & $111100$ & $\textcolor{red}{101}1010$ & $1111101$\\
 & $01110\textcolor{red}{0}$ & $000001$ & $\textcolor{red}{011}0011$ & $0011110$ & $10001\textcolor{red}{1}$ & $111110$ & $\textcolor{red}{101}0011$ & $1111110$\\
 & $00110\textcolor{red}{0}$ & & $\textcolor{red}{011}0101$ & & $11001\textcolor{red}{1}$ & & $\textcolor{red}{101}0101$ & \\
 & $00010\textcolor{red}{0}$ & & $\textcolor{red}{011}0110$ & & $11101\textcolor{red}{1}$ & & $\textcolor{red}{101}0110$ & \\
 &\framebox[1.1\width]{$ 01111\textcolor{red}{0}$} & & \framebox[1.1\width]{$\textcolor{red}{010}0111$} & & $ 10000\textcolor{red}{1}$ & & $\textcolor{red}{100}0111$ & \\
 &$ 00111\textcolor{red}{0}$ & & $\textcolor{red}{010}1011$ & & $ 11000\textcolor{red}{1}$ & & $\textcolor{red}{100}1011$ & \\
 & $00011\textcolor{red}{0}$ & & $\textcolor{red}{010}1101$ & & $ 11100\textcolor{red}{1}$ & & $\textcolor{red}{100}1101$ & \\
 & $00001\textcolor{red}{0}$ & & $\textcolor{red}{010}1110$ & & $ 11110\textcolor{red}{1}$ & & $\textcolor{red}{100}1110$ & \\
\hline 
\end{tabular}
}
\caption{The sets $ \mathfrak{T}_n $ and $ \mathcal{P}_{n+1}$, $ n=0,1,2,3,4,5 $.}
\label{tab:bijection}
\end{table}

\end{document}